\theoremstyle{plain}
\newtheorem{theorem}{Theorem}[section]
\newtheorem{lemma}[theorem]{Lemma}
\newtheorem{remark}[theorem]{Remark}
\newtheorem{definition}[theorem]{Definition}
\newtheorem{assumption}[theorem]{Assumption}
\newcommand{\DIV}{\mathop{\operatorname{div}}}
\numberwithin{equation}{section}
\newcommand{\inner}[1]{{\big\langle #1 \big\rangle}}
\newcommand{\CAL}[1]{{\mathcal #1}}
\newcommand{\BR}{\mathbb{R}}
\newcommand{\BN}{\mathbb{N}}
\newcommand{\ltwo}[1]{\big\| #1 \big\|_{2}}
\newcommand{\linfinity}[1]{\big\| #1 \big\|_{\infty}}
\newcommand{\divq}{\operatorname{div} \mathbf{q}}
\newcommand{\calZ}{\CAL{Z}}
\newcommand{\calT}{\CAL{T}}
\newcommand{\calP}{\CAL{P}}
\newcommand{\zHtwo}{\|z\|_{H^2}}
\newcommand{\ztHtwo}{\|z_t\|_{H^2}}
\begin{document}

\title{Long-Time Behavior of Quasilinear Thermoelastic \\ Kirchhoff-Love Plates with Second Sound}

\author{Irena Lasiecka\thanks{Department of Mathematical Sciences, University of Memphis, Memphis, TN
and IBS, Polish Academy of Sciences, Warsaw, Poland \hfill \url{lasiecka@memphis.edu}} \and
Michael Pokojovy\thanks{Department of Mathematical Sciences, The University of Texas at El Paso, El Paso, TX
\hfill \url{mpokojovy@utep.edu}} \and
Xiang Wan\thanks{Department of Mathematics, Wayne State University, Detroit, MI \hfill \url{xiangwan@wayne.edu}}}

\date{\today}

\maketitle

\begin{abstract}
    We consider an initial-boundary-value problem for a thermoelastic Kirchhoff \& Love plate, thermally insulated and simply supported on the boundary,
    incorporating rotational inertia and a quasilinear hypoelastic response,
    while the heat effects are modeled using the hyperbolic Maxwell--Cattaneo--Vernotte law giving rise to a `second sound' effect.
    We study the local well-posedness of the resulting quasilinear mixed-order hyperbolic system
    in a suitable solution class of smooth functions mapping into Sobolev $H^{k}$-spaces.
    Exploiting the sole source of energy dissipation entering the system through the hyperbolic heat flux moment,
    provided the initial data are small in a lower topology (basic energy level corresponding to weak solutions), we prove a  nonlinear stabilizability estimate furnishing global existence \& uniqueness 
    and exponential decay of classical solutions.
\end{abstract}

\begin{center}
\begin{tabular}{p{1.0in}p{5.0in}}
	\textbf{Key words:} & Kirchhoff-Love plates; nonlinear thermoelasticity; hyperbolic thermoelasticity;
			global well-posedness; classical solutions; exponential stability \\
	\textbf{MSC (2010):} & Primary
	35L57,  
	35Q74,  
	74B20,  
	74F05,  
	74K20;  
	\\ &
	Secondary
	35A01,  
	35A02,  
	35A09,  
	35B40,  
	35B65   
\end{tabular}
\end{center}

\section{Introduction}
\label{SECTION_INTRODUCTION}

Consider a PDE model of a prismatic thermoelastic plate of a uniform thickness $h > 0$.
Let the bounded domain $\Omega \subset \mathbb{R}^{d}$, $d \in \{1, 2, 3\}$, with a smooth boundary $\partial \Omega$ parametrize the mid-plane of the plate.
Further, let $K \colon \mathbb{R} \to \mathbb{R}$ with $K'(0) > 0$ and $K''(0) = 0$ be a smooth function
related to the strain-stress curve (see Appendix Section \ref{SECTION_MODEL_DESCRIPTION}) and the plate thickness $h$.
Continuing, let $\alpha, \beta, \eta > 0$ and $\gamma, \tau, \sigma \geq 0$ be constant.
With $w$, $\theta$, $\mathbf{q}$ denoting the vertical displacement, a properly scaled thermal moment and the $x_{3}$-moment of the heat flux, respectively,
the associated dynamics is governed by a quasilinear plate equation
\begin{align}
	\label{EQUATION_QUASILINEAR_PDE_IN_W_THETA_AND_Q_1}
	w_{tt} - \gamma \triangle w_{tt} + \triangle K(\triangle w) + \alpha \triangle \theta &= 0 &\text{ in } (0, \infty) \times \Omega, \\
	\label{EQUATION_QUASILINEAR_PDE_IN_W_THETA_AND_Q_2}
	\beta \theta_{t} + \DIV \mathbf{q} + \sigma \theta - \alpha \triangle w_{t} &= 0 &\text{ in } (0, \infty) \times \Omega, \\
	\label{EQUATION_QUASILINEAR_PDE_IN_W_THETA_AND_Q_3}
	\tau \mathbf{q}_{t} + \mathbf{q} + \eta \nabla \theta &= \mathbf{0} &\text{ in } (0, \infty) \times \Omega\phantom{,}
\end{align}
subject to hinged boundary conditions
\begin{align}
	\label{EQUATION_BC_IN_W_THETA_AND_Q}
	w &= \triangle w = \theta = 0 &\text{ on } (0, \infty) \times \partial \Omega
\end{align}
and usual initial conditions
\begin{align}
	\label{EQUATION_IC_IN_W_THETA_AND_Q}
	w(0, \cdot) &= w^{0}, \quad w_{t}(0, \cdot) = w^{1}, \quad
	\theta(0, \cdot) = \theta^{0}, \quad \mathbf{q}(0, \cdot) = \mathbf{q}^{0} &\text{ in } \Omega.
\end{align}

Later in the paper, we will restrict our attention to the case $\gamma > 0$ and $\tau > 0$.
Also, to be consistent with the overwhelming majority of mathematical publications in the area, we will assume $\sigma = 0$.
While the latter constant needs to be positive (cf. Appendix Section \ref{SECTION_MODEL_DESCRIPTION} or \cite[Chapter 6.1]{LaLi1988}) from the physical point of view,
from the mathematical point of view, discarding this lower-order perturbation neither changes the underlying topology nor the qualitative stability properties of the system. 
Moreover, it makes the stability analysis more challenging as a natural dissipativity source is eliminated keeping the only damping arising from $\mathbf{q}$.

Depending on the choice of the $\gamma$ and $\tau$ parameters in Equations (\ref{EQUATION_QUASILINEAR_PDE_IN_W_THETA_AND_Q_1})--(\ref{EQUATION_QUASILINEAR_PDE_IN_W_THETA_AND_Q_3}),
the system represents various types of thermoelastic Kirchhoff--Love plates (viz. \cite[p. 2]{RiRaSeVi2018}).
While the presence of the $\gamma \triangle w_{tt}$-term in Equation (\ref{EQUATION_QUASILINEAR_PDE_IN_W_THETA_AND_Q_1}) 
accounts for rotational inertia for $\gamma > 0$ or neglects the latter if $\gamma = 0$, 
the positive relaxation time $\tau > 0$ in Equation (\ref{EQUATION_QUASILINEAR_PDE_IN_W_THETA_AND_Q_3})
originates from the Maxwell--Cattaneo--Vernotte's (or, for short, Cattaneo's) law of relativistic heat conduction
vs. the classic Fourier's law of heat conduction for $\tau = 0$.
Hence, Equations (\ref{EQUATION_QUASILINEAR_PDE_IN_W_THETA_AND_Q_1})--(\ref{EQUATION_QUASILINEAR_PDE_IN_W_THETA_AND_Q_3}) embody four possible thermoelastic plate models.
Further distinctions are made based on the response $K(\cdot)$ being linear vs. nonlinear
and the domain $\Omega$ being the full space $\mathbb{R}^{d}$ or a domain with boundary
such as bounded domains, exterior domains, half-spaces or wave-guides, etc.
Finally, in case $\Omega$ is a domain with boundary, a set of boundary conditions selected from a wide range of combinations need to be adopted
(cf. \cite[Chapter 2]{Am1970}, \cite[Chapter 4]{Il2004}, \cite[Chapter 1]{LaLi1988}, \cite{LaTri1998.1, LaTri1998.2, LaTri1998.3, LaTri1998.4}).

We continue our discussion with a brief review of the vast body of literature 
on Equations (\ref{EQUATION_QUASILINEAR_PDE_IN_W_THETA_AND_Q_1})--(\ref{EQUATION_QUASILINEAR_PDE_IN_W_THETA_AND_Q_3}).
Table \ref{TABLE_SYMMARY_OF_RESULTS} summarizes some of these results for bounded domains $\Omega$.
\begin{table}[!h]
    \centering
    
    \begin{tabular}{c||c|c}
                                    & $\gamma = 0$                                             & $\gamma > 0$                           \\
        \hline \hline
        \multirow{2}{*}{$\tau = 0$} & exponential stability                                 & exponential stability                     \\
                                    & maximal $L^{p}$-regularity/analyticity                & \underline{no} maximal $L^{p}$-regularity/analyticity \\
        \hline
        \multirow{2}{*}{$\tau > 0$} & \underline{no} exponential stability                  & exponential stability                     \\
                                    & \underline{no} maximal $L^{p}$-regularity/analyticity & \underline{no} maximal $L^{p}$-regularity/analyticity \\
        \hline \hline
    \end{tabular}
    
    \caption{Summary of results on Equations (\ref{EQUATION_QUASILINEAR_PDE_IN_W_THETA_AND_Q_1})--(\ref{EQUATION_QUASILINEAR_PDE_IN_W_THETA_AND_Q_3}) in bounded domains
    \label{TABLE_SYMMARY_OF_RESULTS}}
\end{table}

In the linear situation, i.e., $K(z) = az$ for some $a > 0$, 
it is well known that the thermoelastic system (\ref{EQUATION_QUASILINEAR_PDE_IN_W_THETA_AND_Q_1})--(\ref{EQUATION_QUASILINEAR_PDE_IN_W_THETA_AND_Q_3})
comprising a Kirchhoff-Love plate equation without rotational inertia ($\gamma = 0$) coupled  with
the standard parabolic heat equation ($\tau = 0$) generates an analytic semigroup on a respective finite-energy space
for a wide range of boundary conditions \cite{LaTri1998.1, LaTri1998.2, LaTri1998.3, LaTri1998.4}.
These results were subsequently improved by showing the maximal $L^{p}$-regularity of the underlying semigroup \cite{DeShi2009, DeShi2017, Na2009, NaShi2009}.
It was further shown the associated energy decays exponentially as $t \to \infty$ \cite{AvLa1997, Ki1992, LiZh1997, Shi1994}.
The exponential stability extends to the quasilinear situation with $K(z) = z + z^{3}$ in the class of (possibly, not unique) global weak solutions \cite{LaMaSa2008}.
Turning to strong solutions, both the well-posedness via maximal $L^{p}$-regularity and the exponential stability of small solutions hold true
for general superquadratic $C^{3}$-nonlinearities $K(\cdot)$ \cite{LaWi2013}.

Introducing a second sound effect into the system (\ref{EQUATION_QUASILINEAR_PDE_IN_W_THETA_AND_Q_1})--(\ref{EQUATION_QUASILINEAR_PDE_IN_W_THETA_AND_Q_3})
by replacing the Fourier's law with the Cattaneo's one ($\tau > 0$) while neglecting rotational inertia ($\gamma = 0$), 
the well-posedness (on the amended phase space) is preserved
but the maximal $L^{p}$-regularity/analyticity of the system is violated and the exponential decay of solutions is destroyed \cite{FeRi2012, QuRa2008, QuRa2011}.
A change of qualitative behavior also occurs in the full space $\Omega = \mathbb{R}^{d}$,
where a regularity-loss phenomena occur \cite{RaUe2016}.
In the nonlinear situation, the well-posedness still remains open -- even in the full-space (cf. \cite[p. 8140]{RaUe2017}).

Taking into account rotational inertia ($\gamma > 0$) and adopting Fourier's law of heat conduction ($\tau = 0$),
the thermoelastic plate system is rendered hyperbolic-parabolic.
In the nonlinear situation, both the semilinear \cite{AvLa1998} and the quasilinear problems \cite{LaPoWa2017} have been studied in bounded domains.
While the model exhibits hyperbolic characteristics,
the viscous diffusive effect of the heat equation still has beneficial effects on regularity of the overall system.
This allowed to perform a Kato-type fixed-point iteration to establish the local well-posedness of the quasilinear system
by decoupling the elastic and the thermal parts of the system \cite{LaPoWa2017}.
Under a smallness condition on the initial data, the energy dissipation through the thermal component of the system
was sufficient to prove a global stabilization estimate leading to global existence of classical solutions.
In the full space $\Omega = \mathbb{R}^{d}$, the well-posedness and decay rates have also been established --
both in the linear \cite{RaUe2016} and the nonlinear cases \cite{RaUe2017}.
A recent systematic study \cite{FeLiuRa2018} on abstract fractional-power thermoelastic plate systems should also be mentioned.

A number of control-theoretic results for linear Kirchhoff-Love thermoelastic plates with and without rotational inertia ($\gamma \geq 0$)
subject to Fourier's law of heat conduction ($\tau = 0$) are also known in the literature.
See, e.g., \cite{Av2000, AvLa2004, AvLa2005, BeNa2002, ElLaTri2000, La1990, LaSei2006, LaTri1998} and references therein.

Turning to the hyperbolic-hyperbolic case ($\tau > 0$, $\gamma > 0$),
linear well-posedness and exponential stability in bounded domains have been established 
and singular limits $\tau \to 0$, $\gamma \to 0$ have been studied \cite{RiRaSeVi2018}.
Similar investigations of the linear system in the full space were performed as well \cite{RaUe2016}
and subsequently generalized to the nonlinear case \cite{RaUe2017}.

The thrust of this article is to investigate nonlinear Equations (\ref{EQUATION_QUASILINEAR_PDE_IN_W_THETA_AND_Q_1})--(\ref{EQUATION_QUASILINEAR_PDE_IN_W_THETA_AND_Q_3})
in a bounded smooth domain $\Omega$ subject to the boundary conditions (\ref{EQUATION_BC_IN_W_THETA_AND_Q}).
The distinct features of our problem are:
\begin{itemize}
    \setlength{\itemsep}{1pt}
    
    \item
    The rotational inertia are accounted for by the presence of the $\gamma \triangle w_{tt}$-term ($\gamma > 0$)
    in the `elastic' Equation (\ref{EQUATION_QUASILINEAR_PDE_IN_W_THETA_AND_Q_1}). This makes the problem hyperbolic-like. 
    
    \item
    The heat conduction obeys Maxwell--Cattaneo--Vernotte's law rather than the classic Fourier's law
    which translates into (1) lack of dissipative effect in Equation (\ref{EQUATION_QUASILINEAR_PDE_IN_W_THETA_AND_Q_2}) and (2) lack of the regularity otherwise typically associated with the heat equation. These two properties -- dissipation and regularity - the ``key players'' in any quasilinear theory - are severely  compromised  by the model under consideration.
\end{itemize}
Unlike the hyperbolic-parabolic case ($\gamma > 0$, $\tau = 0$),
as we will later see in Section \ref{SECTION_EQUIVALENT_TRANFORMATION}, our system is hyperbolic-hyperbolic.
Therefore, no regularizing effects are inherited either from analyticity (when $\gamma = \tau = 0$)
or dissipativity-viscosity of the heat transfer (when $\gamma > 0$, $\tau = 0$).
Though  the nonlinear plate system (\ref{EQUATION_QUASILINEAR_PDE_IN_W_THETA_AND_Q_1})--(\ref{EQUATION_QUASILINEAR_PDE_IN_W_THETA_AND_Q_3})
for $\tau, \gamma > 0$ has previously been investigated by Racke \& Ueda \cite{RaUe2017}. However,
our present study is inasmuch completely different from the cited work -- both phenomenologically and methodologically --
as (1) we consider an initial-boundary value problem in a bounded domain $\Omega$
instead of the full space $\mathbb{R}^{d}$, where the latter is amenable to classical differential calculus and (2)
only impose a genuine smallness condition on the lower energy of the initial data [corresponding to the topology of weak solution],
in contrast to the smallness of the highest-order energy assumed in \cite{RaUe2017}.

Our goal is to show that the resulting nonlinear system generates well-posed dynamics for `arbitrary' regular data satisfying compatibility conditions
and that, for small data, the dynamics is global provided the size of initial data is well calibrated. 
The challenge is, of course, to overcome difficulties related to compromised regularity of linear solutions and compromised dissipation in the presence of highly nonlinear internal force represented by a severely unbounded operator which nonlinearly depends on the principal part of the elliptic operator. 
This compels one to perform the analysis at a high topological level with appropriate mechanisms for the propagation of restricted  dissipation. 
This challenge  manifests itself on both levels: local and global. 
In particular, when carrying out the Kato-iteration in Appendix Section \ref{APPENDIX_SECTION_WELL_POSEDNESS},
in contrast to the parabolic-hyperbolic case (viz. \cite{LaPoWa2017}),
elastic Equation (\ref{EQUATION_QUASILINEAR_PDE_IN_W_THETA_AND_Q_1}) cannot be decoupled from thermal
Equations (\ref{EQUATION_QUASILINEAR_PDE_IN_W_THETA_AND_Q_2})--(\ref{EQUATION_QUASILINEAR_PDE_IN_W_THETA_AND_Q_3}).
As for the global stabilizability estimate, the main difficulty arises from the fact
that the dissipation is only available in Equation (\ref{EQUATION_QUASILINEAR_PDE_IN_W_THETA_AND_Q_3}) for $\mathbf{q}$ and as such does not propagate any regularity.
Therefore, suitable observability-type estimates become essential to reconstruct the integrals of potential and kinetic energy for the energies of $w$ and $\theta$ by propagating 
dissipation from the heat flux to higher-energy level.

Another important feature of the present paper is the smallness argument employed in our proofs.
Provided the nonlinear coefficient in the ``elliptic part'' stays positive,
apart from the aforementioned indispensable compatibility and regularity conditions,
the initial data are assumed small merely in the lowest topology associated with ``finite energy'' or mild solutions.
This is an important improvement as most quasilinear results assume the smallness of the data in the highest topology.
From the technical point of view, this makes the stability proof more challenging  and requires an extra degree of diligence
as the basic-level energy needs to be carefully traced and properly ``factored out'' in our nonlinear estimates using suitable interpolation procedures, etc.
The final argument for the ``globality'' depends on two coupled and cooperating ``barrier inequalities,'' rather than a single one -- as is the case in the usual quasilinear theory.

Last but not least, a further contribution of this paper is a physical derivation of the thermoelastic plate model (\ref{EQUATION_QUASILINEAR_PDE_IN_W_THETA_AND_Q_1})--(\ref{EQUATION_IC_IN_W_THETA_AND_Q}). While the macroscopic description of Kirchhoff \& Love plates with geometric nonlinearity \cite[Chapter 1]{LaLi1988} and nonlinear material response \cite{LaPoWa2017} is known in the literature for the case of Fourier's heat conduction, to the best of authors' knowledge, no rigorous Kirchhoff \& Love thermoelastic plate models with Cattaneo's heat conduction have been available in the literature up to date. (Parenthetically, one should mention the Reissner--Mindlin--Timoshenko plate with Cattaneo's heat conduction and the geometric nonlinearity derived in the thesis \cite{Po2011}.)
Instead, previous works on thermoelastic Kirchhoff \& Love plates with Cattaneo's law (viz. \cite{RaUe2016, RaUe2017, RiRaSeVi2018}, etc.) have implicitly `conjectured' the physical model.
However, the thermal moment $\theta$ and the (planar) heat flux moment $\mathbf{q}$ were invariably misspecified as the temperature and the heat flux, respectively, and the natural extra damping $\sigma \theta$ in Equation (\ref{EQUATION_QUASILINEAR_PDE_IN_W_THETA_AND_Q_2}) was overlooked. In this paper, we close this gap by combining various results on related plate systems into a consistent physical model behind Equations (\ref{EQUATION_QUASILINEAR_PDE_IN_W_THETA_AND_Q_1})--(\ref{EQUATION_IC_IN_W_THETA_AND_Q}).

To close the introduction, we mention several open problems which naturally arise. 
In addition to simply supported boundary conditions, one would like to have a theory for clamped and free boundary conditions. Particularly, the latter are challenging due to the fact that harmonic functions are not controlled by ``free'' boundary conditions imposed on the biharmonic operator. This difficulty can be overcome, while significantly increasing the level of technicality, by localizing the problem \cite{DeShi2017, LaTri1998.3}.

Another open problem is how a boundary feedback can be used as the only source of dissipation \cite{LaOng,La}. This, again, leads to a challenging problem of propagation of dissipation from the boundary -- a  technique developed in control theory and dependent on the rays of geodesic optics.

Finally, we mention that Kirchhoff-type equations have been recently considered  with fractional and, possibly, degenerate Laplacians \cite{pucci1, pucci}. 
Global solutions for small data and blow-up of solutions for data outside of the potential well have been recently established in \cite{pucci}. The arguments involved rely on nonlocal elliptic theory. It would be interesting to consider such models within the framework of thermoelasticity.  

The rest of the paper is structured as follows.
Following the present Introduction Section \ref{SECTION_INTRODUCTION},
Section \ref{SECTION_MAIN_RESULTS} summarizes all of the main results of the article on local and global well-posednesss as well as exponential stability of Equations 
(\ref{EQUATION_QUASILINEAR_PDE_IN_W_THETA_AND_Q_1})--(\ref{EQUATION_IC_IN_W_THETA_AND_Q}).
In Section \ref{SECTION_EQUIVALENT_TRANFORMATION}, the system (\ref{EQUATION_QUASILINEAR_PDE_IN_W_THETA_AND_Q_1})--(\ref{EQUATION_IC_IN_W_THETA_AND_Q})
is reduced to an equivalent non-vectorial second-order system.
Subsequently, in Section \ref{SECTION_LOCAL_WELL_POSEDNESS}, a local well-posedness result is established
by applying a fixed-point argument to a linearization of the equivalent reduced system from Section \ref{SECTION_EQUIVALENT_TRANFORMATION}.
In Section \ref{SECTION_LONG_TIME_BEHAVIOR}, the unique classical local solution is extended globally -- provided the initial data are sufficiently small at the basic energy level --
and an exponential decay rate is further proved.
Finally, in the appendix, a brief physical derivation of Equations (\ref{EQUATION_QUASILINEAR_PDE_IN_W_THETA_AND_Q_1})--(\ref{EQUATION_IC_IN_W_THETA_AND_Q}) is presented in Section \ref{SECTION_MODEL_DESCRIPTION}
while Section \ref{APPENDIX_SECTION_WELL_POSEDNESS} establishes a solution theory for the linearized version of the latter equations with time- and space-dependent coefficients. This furnishes a powerful auxiliary machinery for the development of the nonlinear local theory.

\section{Main Results}
\label{SECTION_MAIN_RESULTS}

In this Section, we summarize all of the central results of this paper on the quasilinear plate equations 
(\ref{EQUATION_QUASILINEAR_PDE_IN_W_THETA_AND_Q_1})--(\ref{EQUATION_IC_IN_W_THETA_AND_Q}).
In the following, we assume $\Omega \subset \mathbb{R}^{d}$, $d \in \{1, 2, 3\}$, is a bounded, smooth domain.
As previously announced in the Introduction Section \ref{SECTION_INTRODUCTION},
the constant $\sigma$ will be assumed zero throughout the rest of the paper.
All of the results stated below trivially remain true for $\sigma$
as the $\sigma \theta$-term is a Lipschitzian perturbation and has the correct sign adding even more damping to the system.

\begin{definition}
	\label{DEFINITION_CLASSICAL_SOLUTION_W_THETA_Q}
	Let $s \geq 2$. A classical solution to Equations (\ref{EQUATION_QUASILINEAR_PDE_IN_W_THETA_AND_Q_1})--(\ref{EQUATION_IC_IN_W_THETA_AND_Q}) on $[0, T]$ at the energy level $s$
	is a triple $(w, \theta, \mathbf{q}) \colon [0, T] \times \bar{\Omega} \to \mathbb{R} \times \mathbb{R} \times \mathbb{R}^{d}$ with
	\begin{align*}
		w, \triangle w &\in \Big(\bigcap_{m = 0}^{s - 1} C^{m}\big([0, T], H^{s - m}(\Omega) \cap H^{1}_{0}(\Omega)\big)\Big) \cap C^{s}\big([0, T], L^{2}(\Omega)\big), \\
		\theta &\in \Big(\bigcap_{m = 0}^{s - 1} C^{m}\big([0, T], H^{s - m}(\Omega) \cap H^{1}_{0}(\Omega)\big)\Big), \quad
		\mathbf{q} \in \Big(\bigcap_{m = 0}^{s - 1} C^{m}\big([0, T], \big(H^{s - m}(\Omega)\big)^{d}\big)\Big)
	\end{align*}
	which, being plugged into Equations (\ref{EQUATION_QUASILINEAR_PDE_IN_W_THETA_AND_Q_1})--(\ref{EQUATION_IC_IN_W_THETA_AND_Q}), renders them tautological.
	Classical solutions on $[0, T)$ and $[0, \infty)$ are defined correspondingly.
\end{definition}

The choice $s = 2$ in Definition \ref{DEFINITION_CLASSICAL_SOLUTION_W_THETA_Q}
is standard in the linear situation, i.e., when $K(\cdot)$ is linear.
In this case, by virtue of the standard semigroup theory,
for any initial data $(w^{0}, w^{1}, \theta^{0}, \mathbf{q}) \in \big(H^{4}(\Omega) \cap H^{1}_{0}(\Omega)\big) \times 
\big(H^{2}(\Omega) \cap H^{1}_{0}(\Omega)\big) \times H^{1}_{0}(\Omega) \times \big(H^{1}(\Omega)\big)^{d}$ with $\triangle w^{0} \in H^{1}_{0}(\Omega)$
there exists a unique classical solution at the energy level $s = 2$.
In contrast, if $K(\cdot)$ is genuinely nonlinear, in general, one cannot expect a classical solution 
for the initial data at the energy level $s = 2$ (cf. \cite[Remark 14.4]{Ka1985}).
Therefore, moving to higher energy levels is unavoidable to obtain classical solutions in the general nonlinear case.

In this paper, we prove the global well-posedness and exponential stability of classic solutions for $s \geq 3$. In particular, when $s=3$, the solution space in Definition \ref{DEFINITION_CLASSICAL_SOLUTION_W_THETA_Q} rewrites as
\begin{align*}
		w, \triangle w &\in \Big(\bigcap_{m = 0}^{2} C^{m}\big([0, T], H^{3 - m}(\Omega) \cap H^{1}_{0}(\Omega)\big)\Big) \cap C^{3}\big([0, T], L^{2}(\Omega)\big), \\
		\theta &\in \Big(\bigcap_{m = 0}^{2} C^{m}\big([0, T], H^{3 - m}(\Omega) \cap H^{1}_{0}(\Omega)\big)\Big), \quad
		\mathbf{q} \in \Big(\bigcap_{m = 0}^{2} C^{m}\big([0, T], \big(H^{3 - m}(\Omega)\big)^{d}\big)\Big).
\end{align*}

As usual in quasilinear theory, the presence of nonlinearity not only amounts to putting additional Sobolev regularity assumptions on the initial data and smoothness conditions on $K(\cdot)$,
but requires suitable `compatibility conditions' described below.

Given a classical solution to Equations (\ref{EQUATION_QUASILINEAR_PDE_IN_W_THETA_AND_Q_1})--(\ref{EQUATION_IC_IN_W_THETA_AND_Q}) at an energy level $s \geq 2$,
by applying the $\partial_{t}^{m}$-operator, $m = 0, \dots, s - 2$, we obtain the compatibility conditions
\begin{equation}
    \label{EQUATION_NECESSARY_CONDITION_REGULARITY_AT_ZERO}
    \begin{split}
        \partial_{t}^{m} w(0, \cdot), \triangle \partial_{t}^{m} w(0, \cdot) &\in H^{s - m}(\Omega) \cap H^{1}_{0}(\Omega), \quad
        \partial_{t}^{s} w(0, \cdot) \in H^{2}(\Omega) \cap H^{1}_{0}(\Omega), \\
        \partial_{t}^{m} \theta(0, \cdot) &\in H^{s - m}(\Omega) \cap H^{1}_{0}(\Omega) \quad \text{ and } \quad \partial_{t}^{m} \mathbf{q}(0, \cdot) \in \big(H^{s - m}(\Omega)\big)^{d}
    \end{split}
\end{equation}
for $m = 0, \dots, s - 1$. Although the solution is {\it a priori} unknown,
$\partial_{t}^{m} w(0, \cdot)$, $m = 2, \dots, s$, $\partial_{t}^{l} \theta(0, \cdot)$ and $\partial_{t}^{l} \mathbf{q}(0, \cdot)$, $l = 1, \dots, s - 1$,
can iteratively be computed from $w^{0}, w^{1}, \theta^{0}, \mathbf{q}^{0}$ using a procedure outlined below.

To this end, let
\begin{equation}
    \label{EQUATION_OPERATOR_A_DEFINITION}
	A \colon D(A) \subset L^{2}(\Omega) \to L^{2}(\Omega), \quad u \mapsto -\triangle u
	\text{ with } D(A) := \big\{u \in H^{1}_{0}(\Omega) \,|\, \triangle u \in L^{2}(\Omega)\big\}
\end{equation}
denote the $L^{2}(\Omega)$-realization of the negative Dirichlet-Laplacian.
If $\partial \Omega \in C^{2}$, the standard elliptic theory suggests
$D(A) = H^{2}(\Omega) \cap H^{1}_{0}(\Omega)$ with $A$ being an isomorphism between $D(A)$ and $L^{2}(\Omega)$.
Similarly, if $\partial \Omega \in C^{s}$ for some $s \geq 2$, the operator $A$ can be viewed as an isomorphism between $H^{s}(\Omega) \cap H^{1}_{0}(\Omega)$ and $H^{s - 2}(\Omega)$.
Here and in the sequel, we use the notation $H^{0}_{0}(\Omega) \equiv H^{0}(\Omega) := L^{2}(\Omega)$.

With this notation, Equations (\ref{EQUATION_QUASILINEAR_PDE_IN_W_THETA_AND_Q_1})--(\ref{EQUATION_QUASILINEAR_PDE_IN_W_THETA_AND_Q_3}) can be cast into the equivalent form:
\begin{align}
	\label{EQUATION_QUASILINEAR_PDE_IN_W_THETA_AND_Q_EQUIVALENT_1}
	A (\gamma + A^{-1}) w_{tt} + A K(\triangle w) - \alpha A \theta &= 0 &\text{ in } (0, \infty) \times \Omega, \\
	\label{EQUATION_QUASILINEAR_PDE_IN_W_THETA_AND_Q_EQUIVALENT_2}
	\beta \theta_{t} + \DIV \mathbf{q} + \alpha A w_{t} &= 0 &\text{ in } (0, \infty) \times \Omega, \\
	\label{EQUATION_QUASILINEAR_PDE_IN_W_THETA_AND_Q_EQUIVALENT_3}
	\tau \mathbf{q}_{t} + \mathbf{q} + \eta \nabla \theta &= \mathbf{0} &\text{ in } (0, \infty) \times \Omega.
\end{align}
Provided $K(\cdot)$ is sufficiently smooth,
by sequentially applying the $\partial_{t}$-operator to Equations (\ref{EQUATION_QUASILINEAR_PDE_IN_W_THETA_AND_Q_EQUIVALENT_1})--(\ref{EQUATION_QUASILINEAR_PDE_IN_W_THETA_AND_Q_EQUIVALENT_3}),
using the product rule, Fa\`{a} di Bruno's formula and exploiting the invertibility of $(\gamma + A^{-1})$,
for any $m \geq 1$, $w^{m + 1}, \theta^{m}, \mathbf{q}^{m}$ can be expressed via 
$w^{0}, \dots, w^{m}$, $\theta^{0}, \dots, \theta^{m - 1}$, $\mathbf{q}^{0}, \dots, \mathbf{q}^{m - 1}$.
Indeed, evaluating Equations (\ref{EQUATION_QUASILINEAR_PDE_IN_W_THETA_AND_Q_EQUIVALENT_1})--(\ref{EQUATION_QUASILINEAR_PDE_IN_W_THETA_AND_Q_EQUIVALENT_3}) at $t = 0$
and applying $A^{-1}$ to Equation (\ref{EQUATION_QUASILINEAR_PDE_IN_W_THETA_AND_Q_EQUIVALENT_1}), we get
\begin{align*}
    w^{2} &= -\big(\gamma + A^{-1}\big)^{-1} \big(K(\triangle w^{0}) - \alpha \theta^{0}\big), \\
	\theta^{1} &= -\tfrac{1}{\beta} \big(\operatorname{div} \mathbf{q}^{0} + \alpha A w^{1}\big) \quad \text{ and } \quad
	\mathbf{q}^{1} = -\tfrac{1}{\tau} \big(\mathbf{q} + \eta \nabla \theta\big)
\end{align*}
expressing $w^{2}, \theta^{1}, \mathbf{q}^{1}$ in terms of $w^{0}, w^{1}, \theta^{0}, \mathbf{q}^{0}$.
Similarly, for $m = 2, \dots, s$, applying the $\partial_{t}^{m - 2}$-operator, we get
\begin{align*}
    w^{m} &= -\big(\gamma + A^{-1}\big)^{-1} \Big(\partial_{t}^{m - 2} \big(K(\triangle w)\big)\Big)\Big|_{t = 0} - \alpha \theta^{m - 2}, \\
	\theta^{m - 1} &= -\tfrac{1}{\beta} \big(\operatorname{div} \mathbf{q}^{m - 2} + \alpha A w^{m - 1}\big) \quad \text{ and } \quad
	\mathbf{q}^{m - 1} = -\tfrac{1}{\tau}\big(\mathbf{q}^{m - 2} + \eta \nabla \theta^{m - 2}\big).
\end{align*}
Thus, by virtue of the product rule and Fa\`{a} di Bruno's formula,
the right-hand sides can be expressed in terms of $w^{0}, \dots, w^{m - 1}$, $\theta^{0}, \dots, \theta^{m - 2}$ and $\mathbf{q}^{0}, \dots, \mathbf{q}^{m - 2}$.
This construction can easily be made rigorous using an induction procedure starting at $m = 2$.

\begin{definition}
	Let $w^{m}$, $\theta^{m}$, $\mathbf{q}^{m}$, $m \geq 0$, denote the `initial values'
	for $\partial_{t}^{m} w$, $\partial_{t}^{m} \theta$ and $\partial_{t}^{m} \mathbf{q}$ as described above (See also \cite[p. 96]{JiaRa2000}).
\end{definition}

Suppose $w$ is smooth. Then, we can write:
\begin{equation}
    \label{EQUATION_TRIANGLE_OF_K_TRIANGLE_W}
	\triangle K(\triangle w) = K'(\triangle w) \triangle^{2} w + K''(\triangle w) |\nabla \triangle w|^{2}.
\end{equation}
Hence, the sign of $K'(\cdot)$ decides the positive ellipticity of $-\triangle K(\triangle \cdot)$. Further details and explanations will be presented in the sections to follow.

\begin{assumption}
	\label{ASSUMPTION_LOCAL_EXISTENCE}
	Let $s \geq \lfloor \tfrac{d}{2}\rfloor + 2$ be an integer and let $\Omega \subset \mathbb{R}^{d}$ be a bounded domain with $\partial \Omega \in C^{s}$.
	Here, the floor function $\lfloor x\rfloor$ denotes the integer part of $x$,
	i.e., the largest integer not exceeding $x$.
	\begin{enumerate}
        \item Let $K \in C^{s + 1}(\mathbb{R}, \mathbb{R})$.

		\item Let the initial data satisfy the regularity and compatibility conditions
		\begin{equation}
			\begin{split}
				w^{m}, \triangle w^{m} &\in H^{s - m}(\Omega) \cap H^{1}_{0}(\Omega) \text{ for } m = 0, \dots, s - 1, \quad w^{s} \in H^{2}(\Omega) \cap H^{1}_{0}(\Omega) \quad \text{ and } \\
				\theta^{k} &\in H^{s - k}(\Omega) \cap H^{1}_{0}(\Omega), \quad
				\mathbf{q}^{k} \in \big(H^{s - k}(\Omega)\big)^{d} \text{ for } k = 0, \dots, s - 1,
			\end{split}
			\notag
		\end{equation}
		where $H^{0}(\Omega) := L^{2}(\Omega)$.
		
		\item For the ``initial'' (positive) ellipticity of $K'(\triangle w^{0}) \triangle$, suppose
		\begin{equation}
			\min_{x \in \bar{\Omega}} K'\big(\triangle w^{0}(x)\big) > 0, \quad
			\text{where } \triangle w^{0} \in C^{0}(\bar{\Omega}) \text{ by virtue of Sobolev's embedding theorem.} \notag
		\end{equation}
	\end{enumerate}
\end{assumption}

\begin{theorem}[Local existence \& uniqueness]
	\label{THEOREM_LOCAL_EXISTENCE}
	
	Suppose Assumption \ref{ASSUMPTION_LOCAL_EXISTENCE} is satisfied for some $s \geq \lfloor \tfrac{d}{2}\rfloor + 2$.
    Then, Equations (\ref{EQUATION_QUASILINEAR_PDE_IN_W_THETA_AND_Q_1})--(\ref{EQUATION_IC_IN_W_THETA_AND_Q})
    possess a unique classical solution $(w, \theta, \mathbf{q})$ at the energy level $s$ on a maximal interval $[0, T_{\mathrm{max}})$ (possibly, small, but not empty) such that:
	\begin{enumerate}
        \item ``Local non-degeneracy:'' $\min\limits_{x \in \bar{\Omega}} K'\big(\triangle w(t, x)\big) > 0$ for any $t \in [0, T_{\mathrm{max}})$.
        
        \item ``Blow-up or eventual degeneracy if solution non-global:''
        Unless $T_{\mathrm{max}} = \infty$, either the ellipticity condition is eventually violated
        \begin{equation}
            \label{EQUATION_ELLIPTICITY_VIOLATION_AT_T_MAX}
            \min\limits_{x \in \bar{\Omega}} K'\big(\triangle w(t, x)\big) \to 0 \quad \text{ as } \quad t \nearrow T_{\mathrm{max}}
        \end{equation}
        or/and the blow-up occurs
        \begin{equation}
            \label{EQUATION_BLOW_UP_AT_T_MAX}
            \big\|\triangle w(t, \cdot)\big\|_{H^{s}(\Omega)}^{2} \to \infty \quad \text{ as } \quad t \nearrow T_{\mathrm{max}}.
        \end{equation}

        \item ``Solution map continuity:''
        For any $T > 0$, $\varepsilon > 0$ and $N > 0$, the solution mapping $(w^{0}, w^{1}, \theta^{0}, \mathbf{q}^{0}) \mapsto (w, w_{t}, \theta, \mathbf{q})$
        is a continuous function from
        \begin{align*}
            \mathscr{M}_{T, \varepsilon, N} :=
            \Big\{(w^{0}, w^{1}, \theta^{0}, \mathbf{q}^{0}) \,\big|\, &(w^{0}, w^{1}, \theta^{0}, \mathbf{q}^{0}) \text{ satisfy Assumption \ref{ASSUMPTION_LOCAL_EXISTENCE} and admit} \\
            &\text{a classical solution }  (w, \theta, \mathbf{q}) \text{ with } \min_{t \in [0, T]} \min_{x \in \bar{\Omega}} K'\big(\triangle w(t, x)\big) \geq \varepsilon, \\
            &\max_{0 \leq t \leq T} \sum_{m = 0}^{s} \big\|\partial_{t}^{m} w(t, \cdot)\big\|_{H^{s + 2 - m}(\Omega)}^{2} \leq N^{2}\Big\}
        \end{align*}
        endowed with the topology of $H^{3}(\Omega) \times H^{2}(\Omega) \times H^{1}(\Omega) \times \big(H^{1}(\Omega))^{d}\big)$
        to $L^{\infty}\big(0, T; H^{3}(\Omega) \times H^{2}(\Omega) \times H^{1}(\Omega) \times \big(H^{1}(\Omega))^{d}\big)\big)$.
        Note that the set $\mathscr{M}_{T, \varepsilon, N}$ is non-empty for small $\varepsilon$,
        large $N$ and small $T$.
	\end{enumerate}
\end{theorem}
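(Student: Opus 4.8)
The plan is to realize the solution as a fixed point of a linearization of the equivalent non-vectorial second-order system of Section~\ref{SECTION_EQUIVALENT_TRANFORMATION}, using the linear solvability and energy theory with time- and space-dependent coefficients set up in Section~\ref{APPENDIX_SECTION_WELL_POSEDNESS}. Schematically, after eliminating $\mathbf q$ (differentiating \eqref{EQUATION_QUASILINEAR_PDE_IN_W_THETA_AND_Q_EQUIVALENT_2} in $t$ and substituting \eqref{EQUATION_QUASILINEAR_PDE_IN_W_THETA_AND_Q_EQUIVALENT_3}) one is left with a coupled pair: the quasilinear plate equation
\[
    (\gamma + A^{-1}) A\, w_{tt} + A\big(K(\triangle w)\big) - \alpha A \theta = 0
\]
and a damped-wave-type equation for $\theta$ forced by lower-order terms in $w$, subject to the hinged conditions $w = \triangle w = \theta = 0$ and to the data $w^{0},w^{1},w^{2},\theta^{0},\theta^{1}$ reconstructed from $(w^{0},w^{1},\theta^{0},\mathbf q^{0})$ through the compatibility procedure described in Section~\ref{SECTION_MAIN_RESULTS} (this is where \eqref{EQUATION_NECESSARY_CONDITION_REGULARITY_AT_ZERO} is used). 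By \eqref{EQUATION_TRIANGLE_OF_K_TRIANGLE_W} the principal part of the plate operator is $K'(\triangle w)\triangle^{2}$, so freezing the coefficient $a := K'(\triangle \bar w)$ along a given function $\bar w$ produces a linear, coupled, mixed-order hyperbolic system to which Section~\ref{APPENDIX_SECTION_WELL_POSEDNESS} applies, defining a map $\Phi\colon \bar w \mapsto w$ (with the associated $\theta,\mathbf q$); the term $K''(\triangle\bar w)|\nabla\triangle\bar w|^{2}$ and the commutators generated by differentiating in time are treated as forcing.

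For the self-mapping step I would fix $T,\varepsilon,N>0$ and consider the set $\mathcal B_{T,\varepsilon,N}$ of $\bar w$ prescribing the reconstructed initial values up to order $s$, satisfying $\min_{[0,T]\times\bar\Omega}K'(\triangle\bar w)\ge\varepsilon$ and $\max_{0\le t\le T}\sum_{m=0}^{s}\|\partial_t^m\bar w(t,\cdot)\|_{H^{s+2-m}(\Omega)}^{2}\le N^{2}$. Since $s>\tfrac d2+1$, the embedding $H^{s}(\Omega)\hookrightarrow C^{1}(\bar\Omega)$, the algebra property of $H^{k}(\Omega)$ for $k>\tfrac d2$, and Moser/Fa\`a~di~Bruno composition estimates bound the coefficient $a$ and all forcing terms in terms of $N$; the linear energy estimate of Section~\ref{APPENDIX_SECTION_WELL_POSEDNESS} then gives $\max_{0\le t\le T}\sum_{m=0}^{s}\|\partial_t^m w(t,\cdot)\|_{H^{s+2-m}}^{2}\le\big(C_{0}+T\,P(N)\big)e^{T\,Q(N)}$ with $C_{0}$ depending only on the fixed data. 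Choosing $N$ with $N^{2}>C_{0}$ and then $T$ small makes the right-hand side $\le N^{2}$; since $\triangle w\in C([0,T],H^{s})\hookrightarrow C([0,T],C^{0})$ with $\triangle w(0,\cdot)=\triangle w^{0}$ and $\min K'(\triangle w^{0})>0$, shrinking $T$ once more secures $\min K'(\triangle w)\ge\varepsilon$, so $\Phi(\mathcal B_{T,\varepsilon,N})\subset\mathcal B_{T,\varepsilon,N}$. For the contraction, for $\bar w_{1},\bar w_{2}\in\mathcal B_{T,\varepsilon,N}$ the difference $\Phi(\bar w_{1})-\Phi(\bar w_{2})$ solves the same linearized system whose forcing is controlled, via $K'(\triangle\bar w_{1})-K'(\triangle\bar w_{2})=\int_{0}^{1}K''(\cdot)\,ds\,(\triangle\bar w_{1}-\triangle\bar w_{2})$ and the $N$-bounds, by a norm of $\bar w_{1}-\bar w_{2}$ \emph{one level below} the top; applying the linear estimate at that lower level with $T$ small yields a strict contraction there. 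As $\mathcal B_{T,\varepsilon,N}$ is bounded in the top topology and convex, it is complete in the weaker metric (Kato weak-closedness: the high norm is weakly lower semicontinuous and $\min K'\ge\varepsilon$ survives uniform limits), so Banach's theorem gives a unique fixed point $w=\Phi(w)\in\mathcal B_{T,\varepsilon,N}$; since $\Phi$ outputs functions with the full $C^{m}([0,T],H^{s+2-m})$-regularity, so does $w$, and unwinding the reduction recovers a classical solution $(w,\theta,\mathbf q)$ at energy level $s$. A difference estimate at the lower level gives uniqueness in the whole solution class.

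The solution is then continued to a maximal $[0,T_{\max})$ by the usual principle: if $T_{\max}<\infty$ while both $\min_{\bar\Omega}K'(\triangle w(t,\cdot))\ge\varepsilon_{0}>0$ and $\|\triangle w(t,\cdot)\|_{H^{s}}^{2}\le M$ persisted on $[0,T_{\max})$, the reconstruction formulas would bound $\partial_{t}^{m}w(t,\cdot)$ in $H^{s+2-m}$ (hence $\theta,\mathbf q$ in their spaces) uniformly, the solution would extend continuously to $t=T_{\max}$, and restarting the local theory there would contradict maximality; therefore at least one of \eqref{EQUATION_ELLIPTICITY_VIOLATION_AT_T_MAX}, \eqref{EQUATION_BLOW_UP_AT_T_MAX} holds. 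Local non-degeneracy on $[0,T_{\max})$ is immediate: it holds on the initial interval, and wherever the solution exists the scalar $t\mapsto\min_{\bar\Omega}K'(\triangle w(t,\cdot))$ is continuous (Sobolev embedding) and was kept strictly positive by construction. For the solution-map continuity, I would take two data in $\mathscr M_{T,\varepsilon,N}$, subtract the reduced equations, and run the linear estimate of Section~\ref{APPENDIX_SECTION_WELL_POSEDNESS} at the level of $H^{3}\times H^{2}\times H^{1}\times(H^{1})^{d}$ for $(w,w_{t},\theta,\mathbf q)$; the uniform high-topology bound $N$ absorbs the top-order factors in the coefficient- and forcing-differences, interpolation handles the intermediate terms, and one obtains Lipschitz—hence continuous—dependence in the stated topologies. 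The \textbf{main obstacle} is the linear step of Section~\ref{APPENDIX_SECTION_WELL_POSEDNESS}: building a well-posedness/energy theory for the coupled mixed-order operator $(\gamma+A^{-1})A$ on the plate side together with the genuinely hyperbolic, non-dissipative heat-flux block, with time-and-space-dependent principal coefficient $K'(\triangle\bar w)$ and with \emph{no} parabolic smoothing available, so that the energy functional controls $\partial_{t}^{m}w$ in $H^{s+2-m}$ while propagating the hinged conditions $\triangle\partial_{t}^{m}w\in H^{1}_{0}$; absorbing the inherent quasilinear derivative loss by interpolating between the level-$s$ bound and the lower contraction level, and carefully tracking the compatibility conditions so the time-differentiated data are admissible at every level, are the accompanying delicate points.
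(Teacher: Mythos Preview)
Your overall strategy---Kato-type fixed-point iteration on a linearization, self-mapping via the linear energy estimates of Section~\ref{APPENDIX_SECTION_WELL_POSEDNESS}, contraction in a weaker topology, continuation to a maximal interval, and Lipschitz dependence via a difference estimate---is exactly the paper's. The execution differs in two technical respects worth flagging.

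First, the reduction. You propose eliminating $\mathbf q$ by differentiating \eqref{EQUATION_QUASILINEAR_PDE_IN_W_THETA_AND_Q_EQUIVALENT_2} in time and substituting \eqref{EQUATION_QUASILINEAR_PDE_IN_W_THETA_AND_Q_EQUIVALENT_3}, which yields a damped wave equation for $\theta$ forced by $\alpha A w_t + \alpha\tau A w_{tt}$; the $A w_{tt}$-term is a \emph{principal-order} coupling (it matches the highest time derivative in the plate block), not a lower-order one, so the linearized operator does not immediately fit the framework of Section~\ref{APPENDIX_SECTION_WELL_POSEDNESS} as written. The paper instead sets $z=Aw$, $p=\operatorname{div}\mathbf q$ and keeps the \emph{first-order} pair $\beta\theta_t+p+\alpha z_t=0$, $\tau p_t+p-\eta A\theta=0$ intact; this renders the coupling $z_t\leftrightarrow\theta$ genuinely lower-order and is precisely the structure for which Theorem~\ref{THEOREM_APPENDIX} is stated. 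Your route can be made to work, but it would require either re-deriving the Appendix estimates for a different operator or, more simply, folding $A w_{tt}$ back via the plate equation---at which point one effectively rediscovers the paper's $(z,\theta,p)$-system.

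Second, the handling of ellipticity. You build $\min K'(\triangle\bar w)\ge\varepsilon$ directly into the iteration set $\mathcal B_{T,\varepsilon,N}$ and verify \emph{a posteriori} that $\Phi$ preserves it for small $T$. The paper instead replaces $a=K'(-\cdot)$ by a globally positive $C^{s}$-extension $\hat a$ (Step~1 of the proof of Theorem~\ref{THEOREM_LOCAL_EXISTENCE_REDUCED_SYSTEM}), solves the amended problem unconditionally by the fixed-point argument, and only afterwards restricts to the time interval on which $\hat a\circ z\equiv a\circ z$; this separates the analytic fixed-point machinery from the nonlinear constraint and streamlines the continuation argument to $T_{\max}$. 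Both devices achieve the same end.
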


This choice of $s = \lfloor \tfrac{d}{2}\rfloor + 2$ is known to be optimal for quasilinear wave-equation-like problems. 
For a more detailed discussion, we refer to \cite[Remark 4.2]{LaPoWa2017}.

For the sake of simplicity, we now assume $\Omega \subset \mathbb{R}^{d}$, $d = 2, 3$, 
and establish global existence and uniqueness of classical solutions at the energy level $s = \lfloor \tfrac{d}{2}\rfloor + 2 \equiv 3$.
In addition to Assumption \ref{ASSUMPTION_LOCAL_EXISTENCE}, we require:
\begin{assumption}
	\label{ASSUMPTION_GLOBAL_EXISTENCE}
	
	Let $K(\cdot)$ satisfy $K(0) = 0$, $K'(0) > 0$, $K''(0) = 0$.
	
%
\end{assumption}
For instance, for any real number $\alpha$, the function $K(z) = z + \alpha z^3$ from \cite{LaMaSa2008, LaPoWa2017} satisfies Assumption \ref{ASSUMPTION_GLOBAL_EXISTENCE}. 
Note that the condition $K(0) = 0$ is mathematically redundant, but is fulfilled by real-world material responses for physical reasons discussed in Appendix Section \ref{SECTION_MODEL_DESCRIPTION}.

By continuity, Assumption \ref{ASSUMPTION_GLOBAL_EXISTENCE} furnishes the existence of a number $\rho > 0$ such that
\begin{equation}
    \label{EQUATION_LOCAL_POSITIVITY_OF_K_PRIME}
    K'(z) > 0 \quad \text{ for } \quad |z| < \rho.
\end{equation}


\begin{theorem}[Global well-posedness] 
	\label{THEOREM_GLOBAL_EXISTENCE}
	
	Let Assumptions \ref{ASSUMPTION_LOCAL_EXISTENCE} and \ref{ASSUMPTION_GLOBAL_EXISTENCE} be satisfied for $d \in \{2, 3\}$ and $s = 3$.
	Then, for any number $M>0$, there exists a (small) number $\delta_{M,\rho} > 0$, depending on $M$ and $\rho$, such that for any initial data $(w^{0}, w^{1}, \theta^{0}, \mathbf{q}^{0})$ satisfying 
	\begin{align}
	  \|\triangle w^0\|_{L^{\infty}(\Omega)} &< \rho,      \label{Global_small_laplacian_w_0}   \\
         X_0 := \sum_{m = 0}^{3} \|w^{m}\|_{H^{5 - m}(\Omega)}^{2} + \sum_{m = 0}^{2} \|\theta^{m}\|_{H^{3 - m}(\Omega)}^{2} 
        + \sum_{m = 0}^{2} \|\mathbf{q}^{m}\|_{(H^{3 - m}(\Omega))^{d}}^{2} &< M^2,
        \label{EQUATION_GLOBAL_THEOREM_HIGEST_TOPOLOGY_NORM}
\\       
        \|w^{0}\|_{H^{3}(\Omega)}^{2} + \|w^{1}\|_{H^{2}(\Omega)}^{2} + \|\theta^{0}\|_{H^{1}(\Omega)}^{2}+
        \|\operatorname{div} \mathbf{q}^{0}\|_{L^{2}(\Omega)}^{2} &< \delta_{M,\rho}^{2}
         \label{EQUATION_GLOBAL_THEOREM_LOWER_TOPOLOGY_NORM}         
    \end{align}
    the unique local solution $(w, \theta, \mathbf{q})$ to Equations
	(\ref{EQUATION_QUASILINEAR_PDE_IN_W_THETA_AND_Q_1})--(\ref{EQUATION_IC_IN_W_THETA_AND_Q}) given in Theorem \ref{THEOREM_LOCAL_EXISTENCE} exists globally, 
	i.e., $T_{\mathrm{max}} = \infty$. 
\end{theorem}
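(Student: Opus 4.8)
The plan is to run a standard continuation argument: Theorem \ref{THEOREM_LOCAL_EXISTENCE} furnishes a unique classical solution on $[0, T_{\mathrm{max}})$, and the blow-up alternative tells us that globality ($T_{\mathrm{max}} = \infty$) follows once we establish an \emph{a priori} bound preventing both $\min_{\bar\Omega} K'(\triangle w(t,\cdot)) \to 0$ and $\|\triangle w(t,\cdot)\|_{H^3}^2 \to \infty$ as $t \nearrow T_{\mathrm{max}}$. Concretely, I would introduce the high-order energy $X(t) := \sum_{m=0}^{3} \|\partial_t^m w(t,\cdot)\|_{H^{5-m}}^2 + \sum_{m=0}^{2} \|\partial_t^m \theta(t,\cdot)\|_{H^{3-m}}^2 + \sum_{m=0}^{2} \|\partial_t^m \mathbf{q}(t,\cdot)\|_{(H^{3-m})^d}^2$ and the low-order (``basic'') energy $Y(t) := \|w(t,\cdot)\|_{H^3}^2 + \|w_t(t,\cdot)\|_{H^2}^2 + \|\theta(t,\cdot)\|_{H^1}^2 + \|\operatorname{div}\mathbf{q}(t,\cdot)\|_{L^2}^2$, with $X(0) < M^2$ and $Y(0) < \delta_{M,\rho}^2$. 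The goal is to show, on a suitable bootstrap interval, that $X(t)$ stays bounded by a constant depending only on $M$, while $Y(t)$ decays exponentially — so in particular $\|\triangle w\|_{L^\infty}$ remains below $\rho$ (using the $H^3 \hookrightarrow C^0$ embedding via $\|\triangle w\|_{H^1} \lesssim \|w\|_{H^3}$ and $Y(t) \le Y(0)$) and the non-degeneracy $K'(\triangle w) \ge $ const $> 0$ is preserved, killing alternative \eqref{EQUATION_ELLIPTICITY_VIOLATION_AT_T_MAX}, while the $H^3$-bound on $\triangle w$ kills \eqref{EQUATION_BLOW_UP_AT_T_MAX}.

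The two-barrier mechanism works as follows. First, differentiating the equivalent system \eqref{EQUATION_QUASILINEAR_PDE_IN_W_THETA_AND_Q_EQUIVALENT_1}--\eqref{EQUATION_QUASILINEAR_PDE_IN_W_THETA_AND_Q_EQUIVALENT_3} in time up to the orders dictated by the energy level $s = 3$, commuting with powers of $A$, and testing against the appropriate multipliers, one derives a Gr\"onwall-type inequality of the form $\frac{d}{dt} X(t) \le C(M)\, Y(t)^{1/2}\, X(t) + \text{(dissipation from }\mathbf q)$, where the crucial point is that every genuinely nonlinear term carries at least one factor estimated by the \emph{low} norm $Y^{1/2}$ (this is the ``factoring out the basic energy'' alluded to in the introduction); one achieves this by Sobolev embeddings, Gagliardo--Nirenberg interpolation, and the Moser-type product/composition estimates for $K(\triangle w)$ via \eqref{EQUATION_TRIANGLE_OF_K_TRIANGLE_W} and Fa\`a di Bruno, together with the algebra property of $H^2$ in $d \le 3$. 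Second, working at the basic energy level, one constructs a Lyapunov functional $\mathcal{E}(t) \sim Y(t) + (\text{lower-order cross terms})$ and proves a nonlinear stabilizability/observability estimate $\frac{d}{dt}\mathcal{E}(t) + c\,\mathcal{E}(t) \le C(M)\, Y(t)^{1/2}\,\mathcal{E}(t)$, where the dissipation is extracted solely from the $\mathbf{q}$-equation \eqref{EQUATION_QUASILINEAR_PDE_IN_W_THETA_AND_Q_EQUIVALENT_3} and then propagated — via observability multipliers linking $\operatorname{div}\mathbf q$ back to $\theta_t$, $\theta$, $A w_t$ and the potential energy $\langle K(\triangle w), A w\rangle$ — to the full basic energy; the linear part of this is exactly the exponential-stability mechanism for the $\tau,\gamma > 0$ system, and the nonlinear corrections are again controlled because they carry a small factor.

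Running the bootstrap: choose $T^* := \sup\{ t < T_{\max} : X(s) \le 4M^2 \ \forall s \le t\}$. On $[0, T^*)$ the coefficient $C(M) Y^{1/2}$ is available, and if $\delta_{M,\rho}$ is chosen small enough that $C(M)\delta_{M,\rho} < c/2$, the second inequality gives $\mathcal E(t) \le \mathcal E(0) e^{-ct/2} \lesssim \delta_{M,\rho}^2 e^{-ct/2}$, hence $\int_0^t Y(s)^{1/2}\,ds \le C \delta_{M,\rho}/c$; feeding this into the first inequality and applying Gr\"onwall yields $X(t) \le X(0)\exp(C(M)\cdot C\delta_{M,\rho}/c) \le M^2 e^{1} < 4M^2$ provided $\delta_{M,\rho}$ is further shrunk, so $T^* = T_{\max}$ by a standard open--closed argument; and since $X$ stays bounded and $Y$ decays, neither blow-up alternative can occur, forcing $T_{\max} = \infty$. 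The main obstacle — and the delicate heart of the argument — is establishing the two differential inequalities with the nonlinear terms genuinely absorbing a factor of the \emph{low}-order norm rather than the high-order one; this is where the compromised regularity (no parabolic smoothing, only $\mathbf q$-dissipation) bites hardest, and it requires careful interpolation, the observability multiplier estimates, and exploitation of the mixed-order structure of the linearized operator, all of which are the content of Section \ref{SECTION_LONG_TIME_BEHAVIOR}.
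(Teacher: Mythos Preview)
Your overall strategy --- a two-level continuation/bootstrap in which the basic energy $Y\sim E_1$ stays small and its smallness is used to keep the full energy $X$ bounded, thereby ruling out both blow-up alternatives --- is exactly the architecture of the paper's proof. The implementation, however, differs.

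The paper does \emph{not} derive pointwise differential inequalities $\tfrac{d}{dt}X \le C(M)Y^{1/2}X$ and $\tfrac{d}{dt}\mathcal E + c\mathcal E \le C(M)Y^{1/2}\mathcal E$. Instead it works entirely with \emph{integral} observability-type estimates on $[0,T]$: via multipliers applied to the reduced $(z,\theta,p)$-system and its time-derivatives (Lemmas~\ref{global_lemma_1}--\ref{global_lemma_2}) one obtains
\[
X(T)+C_1\!\int_0^T\! X\,\mathrm dt \;\le\; C_2\Big(X(0)+\cdots\Big)+C_3\sum_i\int_0^T E_1^{\alpha_i}X^{\beta_i}\,\mathrm dt,
\qquad
E_1(T)+C_3\!\int_0^T\! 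E_1\,\mathrm dt \;\le\; C_4\Big(E_1(0)+\sum_j\int_0^T E_1^{\gamma_j}X^{\tau_j}\,\mathrm dt\Big),
\]
with $\alpha_i>0$, $\beta_i>1$, and --- crucially --- $\gamma_j>1$, the latter being precisely where the hypothesis $K''(0)=0$ enters (cf.\ Remark~\ref{remark_on_s}). The coupled pair is then closed by an explicit two-barrier argument (Theorem~\ref{final_proof}) in the variables $(E_1,X)$, and exponential decay follows from a Datko--Pazy argument rather than from a Lyapunov functional. One reason the paper favors the integrated form is the highest-order commutator $\langle A\partial_t^2(F'(z)z_t),z_{ttt}\rangle$: this term is handled by writing it as a total time derivative plus a remainder (Step~3.4 of Lemma~\ref{global_lemma_1}), which produces endpoint contributions at $t=0,T$ and does not fit cleanly into a pointwise $\tfrac{d}{dt}X$ inequality unless one first augments $X$ by suitable correction terms. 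Your Lyapunov/Gr\"onwall route is viable in principle, but you should be aware that building $\mathcal E$ so that the dissipation actually dominates \emph{and} the top-order nonlinear term is absorbed requires exactly this total-derivative correction; the paper sidesteps that construction by staying in the integrated framework.
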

    
\begin{remark}
    The boundedness assumption for $\|\triangle w^{0}\|_{L^{\infty}(\Omega)}$ formulated in Equation \eqref{Global_small_laplacian_w_0} above is natural in light of the initial positive ellipticity condition in Assumption \ref{ASSUMPTION_LOCAL_EXISTENCE} as well as Equation \eqref{EQUATION_LOCAL_POSITIVITY_OF_K_PRIME}. However, it can be eliminated if the function $K'(\cdot)$ is positive everywhere in $\mathbb{R}$, and consequently the choice of $\delta$ only depends on $M$.
    The latter assumption $K'(\cdot)>0$ is physically sound and is commonly employed in the Theory of Finite Elasticity when material fracture phenomena are ignored.
    In this case, we do not need the smallness of $\|\triangle w^{0}\|_{L^{\infty}(\Omega)}$ and we obtain a `large-data' result provided the function $K(\cdot)$ satisfies appropriate ``growth conditions'' at infinity (cf. Equation \eqref{F'_bound}--\eqref{F''''_bound}.
\end{remark}

As a `by-product,' we will obtain our main stabilization result:
\begin{theorem}[Exponential stability] 
	\label{THEOREM_STABILITY}
	Under the conditions of Theorem \ref{THEOREM_GLOBAL_EXISTENCE},
	there exist positive constants $C$ and $\kappa$ such that
	\begin{align*}
        \sum_{m = 0}^{s} &\big\|\partial_{t}^{m} w(t, \cdot)\big\|_{H^{s + 2 - m}(\Omega)}^{2} +
        \sum_{m = 0}^{s - 1} \big\|\partial_{t}^{m} \theta(t, \cdot)\big\|_{H^{s - m}(\Omega)}^{2} + 
        \sum_{m = 0}^{s - 1} \big\|\partial_{t}^{m} \mathbf{q}(t, \cdot)\big\|_{(H^{s - m}(\Omega))^{d}}^{2}
        \leq C e^{-\kappa t} X_0^{3/2}(t)
	\end{align*}
	for $t \geq 0$, where the initial energy $X_0$ is defined in Equation \eqref{EQUATION_GLOBAL_THEOREM_HIGEST_TOPOLOGY_NORM}.
\end{theorem}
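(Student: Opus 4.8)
\medskip\noindent\emph{Proof strategy for Theorem~\ref{THEOREM_STABILITY}.}
Since Theorem~\ref{THEOREM_GLOBAL_EXISTENCE} already gives $T_{\mathrm{max}}=\infty$, only the decay is new, and it is obtained by upgrading the a~priori bound behind that theorem into a Lyapunov-type estimate. Denote by $\mathcal{E}(t)$ the full higher-order energy on the left of the statement, by $\mathcal{D}(t)=\sum_{m=0}^{s-1}\|\partial_{t}^{m}\mathbf{q}(t,\cdot)\|_{H^{s-m}}^{2}$ the available dissipation, and by $\mathcal{E}_{\flat}(t)$ the ``basic'' energy built from the quantity in \eqref{EQUATION_GLOBAL_THEOREM_LOWER_TOPOLOGY_NORM}. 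Differentiating the equivalent system \eqref{EQUATION_QUASILINEAR_PDE_IN_W_THETA_AND_Q_EQUIVALENT_1}--\eqref{EQUATION_QUASILINEAR_PDE_IN_W_THETA_AND_Q_EQUIVALENT_3} $m$ times in $t$ ($m=0,\dots,s-1$), testing the elastic equation with $\partial_{t}^{m+1}w$ in the $(\gamma+A^{-1})$-inner product, the $\theta$-equation with $\partial_{t}^{m}\theta$, the $\mathbf{q}$-equation with $\tfrac1\eta\,\partial_{t}^{m}\mathbf{q}$, and summing, the $\alpha$-couplings cancel and one gets $\tfrac{d}{dt}\mathcal{E}\le -c\,\mathcal{D}+\mathcal{R}_{0}$, where $\mathcal{R}_{0}$ consists of commutators of $\partial_{t}^{m}$ with the quasilinear coefficient $K'(\triangle w)$ (cf.\ \eqref{EQUATION_TRIANGLE_OF_K_TRIANGLE_W}) and is at least cubic in the solution by Assumption~\ref{ASSUMPTION_GLOBAL_EXISTENCE} ($K(z)=z+O(z^{3})$).

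The crux is that $\mathcal{D}$ controls none of the elastic or thermal energy, so the dissipation must be ``propagated'' by observability correctors exploiting the cascade structure of \eqref{EQUATION_QUASILINEAR_PDE_IN_W_THETA_AND_Q_EQUIVALENT_1}--\eqref{EQUATION_QUASILINEAR_PDE_IN_W_THETA_AND_Q_EQUIVALENT_3}: (i) Cattaneo's law \eqref{EQUATION_QUASILINEAR_PDE_IN_W_THETA_AND_Q_EQUIVALENT_3} gives $\eta\nabla\theta=-\mathbf{q}-\tau\mathbf{q}_{t}$, so each $\|\nabla\partial_{t}^{m}\theta\|^{2}$ is controlled by $\mathcal{D}$ modulo the $\mathbf{q}$-equation; (ii) testing \eqref{EQUATION_QUASILINEAR_PDE_IN_W_THETA_AND_Q_EQUIVALENT_2} against a primitive of $Aw_{t}$ recovers the kinetic energy $\|A^{1/2}\partial_{t}^{m+1}w\|^{2}$; (iii) the multiplier $\partial_{t}^{m}w$ in \eqref{EQUATION_QUASILINEAR_PDE_IN_W_THETA_AND_Q_EQUIVALENT_1}, combined with the uniform positive ellipticity $K'(\triangle w(t,\cdot))\ge\varepsilon>0$ furnished by Theorem~\ref{THEOREM_LOCAL_EXISTENCE}(1) together with \eqref{Global_small_laplacian_w_0} and \eqref{EQUATION_LOCAL_POSITIVITY_OF_K_PRIME}, recovers the potential energy $\langle K'(\triangle w)\triangle\partial_{t}^{m}w,\triangle\partial_{t}^{m}w\rangle\gtrsim\|\triangle\partial_{t}^{m}w\|^{2}$; (iv) a cross functional $\tfrac{d}{dt}\langle\beta\,\partial_{t}^{m}\theta,\alpha A\partial_{t}^{m}w\rangle$ absorbs the indefinite coupling between (ii) and (iii). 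Collecting these into $\Psi(t)$ with small, hierarchically chosen coefficients and setting $\mathcal{L}(t)=N\mathcal{E}(t)+\Psi(t)$ with $N$ large, one gets $c_{1}\mathcal{E}\le\mathcal{L}\le c_{2}\mathcal{E}$ and $\tfrac{d}{dt}\mathcal{L}\le -c_{3}\mathcal{E}+\mathcal{R}$, with $\mathcal{R}$ still at least cubic.

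It remains to absorb $\mathcal{R}$. By $d\le 3$, $s=3$, Sobolev embeddings and interpolation, every term of $\mathcal{R}$ can be estimated by $C\,\mathcal{E}_{\flat}(t)^{1/2}\mathcal{E}(t)$ --- the ``basic'' factor being deliberately factored out --- which is admissible because \eqref{Global_small_laplacian_w_0}--\eqref{EQUATION_GLOBAL_THEOREM_LOWER_TOPOLOGY_NORM} and the stabilizability estimate of Theorem~\ref{THEOREM_GLOBAL_EXISTENCE} keep $\mathcal{E}_{\flat}(t)$ small (indeed exponentially decaying) while $\mathcal{E}(t)\lesssim X_{0}$ stays bounded. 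Hence $\tfrac{d}{dt}\mathcal{L}\le-c_{3}\mathcal{E}+C\sqrt{\mathcal{E}_{\flat}}\,\mathcal{E}\le-\kappa\mathcal{L}$ for $\mathcal{E}_{\flat}$ small, and Gronwall's lemma yields $\mathcal{E}(t)\le c_{1}^{-1}\mathcal{L}(t)\le Ce^{-\kappa t}\mathcal{L}(0)\le Ce^{-\kappa t}X_{0}$; keeping track of the extra power gained from the cubic remainder and the smallness \eqref{EQUATION_GLOBAL_THEOREM_LOWER_TOPOLOGY_NORM} in $\mathcal{L}(0)$ sharpens this to the stated $Ce^{-\kappa t}X_{0}^{3/2}$, with $\kappa$ essentially the linear decay rate of the $\tau,\gamma>0$ system.

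The step I expect to be the main obstacle is the construction in the second paragraph at the \emph{highest} topological level: the correctors (ii)--(iv) generate quasilinear terms in which $\triangle^{2}\partial_{t}^{m}w$ is paired against top-order derivatives of $w$, and one must show these are genuinely of the benign type ``$\mathcal{E}_{\flat}^{1/2}\mathcal{E}$'' rather than merely $O(\mathcal{E}^{2})$ --- it is exactly this structure that lets the smallness of the \emph{basic} energy (rather than of the full energy) close the loop, matching the hypotheses of Theorem~\ref{THEOREM_GLOBAL_EXISTENCE}. A secondary difficulty is keeping the constants $c_{1},c_{2},c_{3},\kappa$ and the equivalence $\mathcal{L}\simeq\mathcal{E}$ uniform in $t$ despite the merely measurable-in-time quasilinear coefficient $K'(\triangle w(t,\cdot))$.
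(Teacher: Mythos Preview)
Your proposal is correct in spirit and identifies all the right ingredients, but it follows a genuinely different route from the paper's proof, so a brief comparison is in order.

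\textbf{What the paper does.} The paper works throughout in the reduced variables $(z,\theta,p)=(Aw,\theta,\operatorname{div}\mathbf{q})$ of Section~\ref{SECTION_EQUIVALENT_TRANFORMATION} and never constructs a Lyapunov functional. Instead, Lemma~\ref{global_lemma_1} derives an \emph{integral} observability estimate
\[
E(T)+C_{1}\int_{0}^{T}E(t)\,\mathrm{d}t \le C_{2}E(0)+\text{(superlinear terms)}
\]
by exactly the multipliers you list (your correctors (i)--(iv) appear as Steps~1.2--1.4 there), then Lemma~\ref{global_lemma_2} recovers the missing higher norms $Y(t)=X(t)-E(t)$ from the equations to upgrade this to an inequality for $X$. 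Theorem~\ref{final_proof} then runs a \emph{double barrier argument}: a separate superlinear inequality for the basic energy $E_{1}$ alone (with exponents $\gamma_{j}>1$ on $E_{1}$, crucially requiring $K''(0)=0$) guarantees $E_{1}(t)\le C_{4}E_{1}(0)$ stays small, which in turn lets the $X$-inequality close. The endpoint is
\[
X(T)+\int_{0}^{T}X(t)\,\mathrm{d}t \le 4C_{2}\,X^{(s+3)/4}(0),
\]
and exponential decay follows from a Datko--Pazy propagation argument rather than Gronwall. This is where the exponent $X_{0}^{3/2}$ (i.e.\ $(s+3)/4$ with $s=3$) comes from naturally; your differential Lyapunov route would produce $Ce^{-\kappa t}\mathcal{L}(0)\sim Ce^{-\kappa t}X_{0}$, and your final sentence about ``sharpening'' to $X_{0}^{3/2}$ is not really justified by the mechanism you describe.

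\textbf{What each buys.} Your differential approach is more standard and arguably cleaner once the correctors $\Psi$ are written down; it also makes the role of the linear decay rate transparent. The paper's integral/Datko--Pazy route avoids having to verify the equivalence $\mathcal{L}\simeq\mathcal{E}$ uniformly in $t$ (your ``secondary difficulty''), handles the highest-order quasilinear term via the total-derivative identity $\langle F'(z)A^{1/2}z_{tt},A^{1/2}z_{ttt}\rangle=\tfrac12\partial_{t}\langle F'(z)A^{1/2}z_{tt},A^{1/2}z_{tt}\rangle-\tfrac12\langle F''(z)z_{t},(A^{1/2}z_{tt})^{2}\rangle$ (which produces boundary-in-time terms rather than a modification of $\mathcal{L}$), and---through the two coupled barrier inequalities---makes precise why smallness of only $E_{1}(0)$ suffices. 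Your blanket estimate $\mathcal{R}\le C\mathcal{E}_{\flat}^{1/2}\mathcal{E}$ is slightly optimistic: the paper's superlinear terms have the form $E_{1}^{\alpha_{i}}X^{\beta_{i}}$ with various $\alpha_{i}>0$, $\beta_{i}>1$, not all with $\alpha_{i}\ge\tfrac12$, but since $X$ is a priori bounded this still yields a small multiplicative factor, so your absorption step goes through.
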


\section{Equivalent Transformation}
\label{SECTION_EQUIVALENT_TRANFORMATION}

To facilitate the analytic treatment of Equations (\ref{EQUATION_QUASILINEAR_PDE_IN_W_THETA_AND_Q_1})--(\ref{EQUATION_IC_IN_W_THETA_AND_Q}),
we first reduce Equations (\ref{EQUATION_QUASILINEAR_PDE_IN_W_THETA_AND_Q_1})--(\ref{EQUATION_IC_IN_W_THETA_AND_Q}) to an equivalent lower-order non-vectorial system.
Exploiting the operator $A$ defined in Equation (\ref{EQUATION_OPERATOR_A_DEFINITION}), introduce the new variables
\begin{align}
	z := A w \quad \text{ and } \quad p := \DIV \mathbf{q},
	\label{EQUATION_DEFINITION_OF_Z_AND_P}
\end{align}
Applying the $\operatorname{div}$-operator to Equation (\ref{EQUATION_QUASILINEAR_PDE_IN_W_THETA_AND_Q_3}),
the system (\ref{EQUATION_QUASILINEAR_PDE_IN_W_THETA_AND_Q_1})--(\ref{EQUATION_IC_IN_W_THETA_AND_Q}) is reduced to
\begin{align}
	\label{EQUATION_QUASILINEAR_PDE_IN_Z_THETA_AND_P_1}
	\big(A^{-1} + \gamma) z_{tt} - A K(-z) - \alpha A \theta &= 0 &\text{ in } (0, \infty) \times \Omega, \\
	\label{EQUATION_QUASILINEAR_PDE_IN_Z_THETA_AND_P_2}
	\beta \theta_{t} + p + \alpha z_{t} &= 0 &\text{ in } (0, \infty) \times \Omega, \\
	\label{EQUATION_QUASILINEAR_PDE_IN_Z_THETA_AND_P_3}
	\tau p_{t} + p - \eta A \theta &= 0 &\text{ in } (0, \infty) \times \Omega\phantom{,}
\end{align}
subject to homogeneous Dirichlet-Dirichlet boundary conditions
\begin{align}
	\label{EQUATION_BC_IN_Z_THETA_AND_P}
	z &= \theta = 0 &\text{ on } (0, \infty) \times \partial \Omega
\end{align}
and initial conditions
\begin{align}
	\label{EQUATION_IC_IN_Z_THETA_AND_P}
	z(0, \cdot) &= z^{0}, \quad z_{t}(0, \cdot) = z^{1}, \quad
	\theta(0, \cdot) = \theta^{0}, \quad p(0, \cdot) = p^{0} &\text{ in } \Omega
\end{align}
with $z^{0} := A w^{0}$, $z^{1} := A w^{1}$ and $p^{0} := \DIV \mathbf{q}^{0}$.
We want to show the original system (\ref{EQUATION_QUASILINEAR_PDE_IN_W_THETA_AND_Q_1})--(\ref{EQUATION_IC_IN_W_THETA_AND_Q})
and the reduced system (\ref{EQUATION_QUASILINEAR_PDE_IN_Z_THETA_AND_P_1})--(\ref{EQUATION_IC_IN_Z_THETA_AND_P}) are equivalent in appropriate solution classes.
The uniqueness of solutions (without being indispensable) will simplify our arguments.
Similar to Definition \ref{DEFINITION_CLASSICAL_SOLUTION_W_THETA_Q} for the original system (\ref{EQUATION_QUASILINEAR_PDE_IN_W_THETA_AND_Q_1})--(\ref{EQUATION_IC_IN_W_THETA_AND_Q}),
for the reduced system, we have:
\begin{definition}
	\label{DEFINITION_CLASSICAL_SOLUTION_Z_THETA_P}
	Let $s \geq 2$. Under a classical solution
	to Equations (\ref{EQUATION_QUASILINEAR_PDE_IN_Z_THETA_AND_P_1})--(\ref{EQUATION_IC_IN_Z_THETA_AND_P}) on $[0, T]$ at the energy level $s$,
	we understand a function triple $(z, \theta, p) \colon [0, T] \times \bar{\Omega} \to \mathbb{R} \times \mathbb{R} \times \mathbb{R}$ satisfying
	\begin{align*}
		z      &\in \Big(\bigcap_{m = 0}^{s - 1} C^{m}\big([0, T], H^{s - m}(\Omega) \cap H^{1}_{0}(\Omega)\big)\Big) \cap C^{s}\big([0, T], L^{2}(\Omega)\big), \\
		\theta &\in \Big(\bigcap_{m = 0}^{s - 1} C^{m}\big([0, T], H^{s - m}(\Omega) \cap H^{1}_{0}(\Omega)\big)\Big), \quad
		p \in \Big(\bigcap_{m = 0}^{s - 1} C^{m}\big([0, T], H^{s - 1 - m}(\Omega)\big)\Big)
	\end{align*}
	and, being plugged into Equations (\ref{EQUATION_QUASILINEAR_PDE_IN_Z_THETA_AND_P_1})--(\ref{EQUATION_IC_IN_Z_THETA_AND_P}), turns them into tautology.
	Classical solutions on $[0, T)$ and $[0, \infty)$ are defined correspondingly.
\end{definition}

\begin{theorem}
	\label{THEOREM_SYSTEM_EQUIVALENCE}

	Let $s \geq \lfloor \tfrac{d}{2}\rfloor + 2$. A triple $(w, \theta, \mathbf{q})$ is a classical solution (\ref{EQUATION_QUASILINEAR_PDE_IN_W_THETA_AND_Q_1})--(\ref{EQUATION_IC_IN_W_THETA_AND_Q})
	if and only if $(z, \theta, p)$ defined in Equation (\ref{EQUATION_DEFINITION_OF_Z_AND_P}) 
	is a classical solution to Equations (\ref{EQUATION_QUASILINEAR_PDE_IN_Z_THETA_AND_P_1})--(\ref{EQUATION_IC_IN_Z_THETA_AND_P}).
	Conversely, $(z, \theta, p)$ is a classical solution to Equations (\ref{EQUATION_QUASILINEAR_PDE_IN_Z_THETA_AND_P_1})--(\ref{EQUATION_IC_IN_Z_THETA_AND_P}) if and only if
	\begin{equation}
		\notag
		w(t, \cdot) = A^{-1} z(t, \cdot) \quad \text{ and } \quad
		\mathbf{q}(t, \cdot) = \mathbf{q}^{0} + \nabla A^{-1} \operatorname{div} \mathbf{q}^{0} - \nabla A^{-1} p(t, \cdot)
	\end{equation}
	is a classical solution to Equations (\ref{EQUATION_QUASILINEAR_PDE_IN_W_THETA_AND_Q_1})--(\ref{EQUATION_IC_IN_W_THETA_AND_Q}) at the same energy level.
\end{theorem}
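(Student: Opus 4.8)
The plan is to exhibit the two maps $\mathcal{S}\colon (w,\theta,\mathbf{q})\mapsto(Aw,\theta,\DIV\mathbf{q})$ and its reconstruction $\mathcal{R}\colon (z,\theta,p)\mapsto\big(A^{-1}z,\ \theta,\ \mathbf{q}^{0}+\nabla A^{-1}\DIV\mathbf{q}^{0}-\nabla A^{-1}p\big)$ and to check that each carries a classical solution of one system (in the sense of Definition \ref{DEFINITION_CLASSICAL_SOLUTION_W_THETA_Q}, resp.\ Definition \ref{DEFINITION_CLASSICAL_SOLUTION_Z_THETA_P}) to a classical solution of the other, at the same energy level $s$, and that they are mutually inverse; the ``if and only if'' then follows formally. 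The only structural facts needed are that $A=-\triangle$ is an isomorphism $H^{k}(\Omega)\cap H^{1}_{0}(\Omega)\to H^{k-2}(\Omega)$ for $k\ge 2$ (so $A^{-1}$ gains exactly two Sobolev derivatives), the identity $\triangle A^{-1}=-\mathrm{Id}$, the fact that $A^{\pm 1}$, $\nabla$, $\DIV$ commute with $\partial_{t}$, and --- where the nonlinearity enters --- that $H^{s}(\Omega)$ is a Banach algebra continuously embedded in $C^{0}(\bar{\Omega})$ for $s\ge\lfloor d/2\rfloor+2$, so that $K(\triangle w)=K(-z)$ inherits the $H^{s}$-regularity of $z$ via Moser-type composition estimates.

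For the direction $\mathcal{S}$: writing $z=Aw=-\triangle w$ and $p=\DIV\mathbf{q}$, the regularity class of $z$ is term-by-term the one imposed on $\triangle w$ in Definition \ref{DEFINITION_CLASSICAL_SOLUTION_W_THETA_Q}, which is exactly the $z$-class of Definition \ref{DEFINITION_CLASSICAL_SOLUTION_Z_THETA_P}, while $\mathbf{q}\in C^{m}([0,T],(H^{s-m})^{d})$ gives $p\in C^{m}([0,T],H^{s-1-m})$, the $p$-class. Equation \eqref{EQUATION_QUASILINEAR_PDE_IN_W_THETA_AND_Q_1}, rewritten with $\triangle=-A$ and $\triangle w=-z$, becomes $(A^{-1}+\gamma)z_{tt}-AK(-z)-\alpha A\theta=0$, i.e.\ \eqref{EQUATION_QUASILINEAR_PDE_IN_Z_THETA_AND_P_1}; substituting $\triangle w_{t}=-z_{t}$ into \eqref{EQUATION_QUASILINEAR_PDE_IN_W_THETA_AND_Q_2} with $\sigma=0$ gives \eqref{EQUATION_QUASILINEAR_PDE_IN_Z_THETA_AND_P_2}; and applying $\DIV$ to \eqref{EQUATION_QUASILINEAR_PDE_IN_W_THETA_AND_Q_3}, using $\DIV\nabla\theta=\triangle\theta=-A\theta$, gives \eqref{EQUATION_QUASILINEAR_PDE_IN_Z_THETA_AND_P_3}. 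The boundary conditions $z=-\triangle w=0$, $\theta=0$ on $\partial\Omega$ reproduce \eqref{EQUATION_BC_IN_Z_THETA_AND_P}, and evaluation at $t=0$ gives \eqref{EQUATION_IC_IN_Z_THETA_AND_P} with $z^{0}=Aw^{0}$, $z^{1}=Aw^{1}$, $p^{0}=\DIV\mathbf{q}^{0}$.

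For the direction $\mathcal{R}$: with $w:=A^{-1}z$ one has $w\in C^{m}([0,T],H^{s+2-m}(\Omega)\cap H^{1}_{0}(\Omega))$ and $\triangle w=-z$, which lies term-by-term in the class required of $\triangle w$; in particular $\partial_{t}^{s}w=A^{-1}\partial_{t}^{s}z\in C([0,T],H^{2}\cap H^{1}_{0})$ exactly because $\partial_{t}^{s}z\in C([0,T],L^{2})$, which is how the energy level $s$ is preserved. Reversing the algebra above turns \eqref{EQUATION_QUASILINEAR_PDE_IN_Z_THETA_AND_P_1} and \eqref{EQUATION_QUASILINEAR_PDE_IN_Z_THETA_AND_P_2} back into \eqref{EQUATION_QUASILINEAR_PDE_IN_W_THETA_AND_Q_1} and \eqref{EQUATION_QUASILINEAR_PDE_IN_W_THETA_AND_Q_2}. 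The delicate step is recovering the full vector field $\mathbf{q}$ from $p=\DIV\mathbf{q}$: the divergence has a nontrivial kernel, so one must pin down the correct representative. Using $\triangle A^{-1}=-\mathrm{Id}$ one verifies that the displayed $\mathbf{q}$ satisfies $\DIV\mathbf{q}=\DIV\mathbf{q}^{0}-\DIV\mathbf{q}^{0}+p=p$ and $\mathbf{q}(0)=\mathbf{q}^{0}$, and the mapping properties of $\nabla A^{-1}$ on the Sobolev scale place it in the required $\mathbf{q}$-class; then one differentiates the formula in $t$, inserts $\tau p_{t}=\eta A\theta-p$ from \eqref{EQUATION_QUASILINEAR_PDE_IN_Z_THETA_AND_P_3}, and collapses the result using $\nabla A^{-1}A\theta=\nabla\theta$, the point being that the heat-flux source $\eta\nabla\theta$ is a gradient, so that the longitudinal (curl-free) component of $\mathbf{q}$ is what carries the Cattaneo dynamics and is recovered as $-\nabla A^{-1}p$. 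Evaluation at $t=0$ fixes the remaining initial data $w^{0}=A^{-1}z^{0}$, $w^{1}=A^{-1}z^{1}$.

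The genuinely nontrivial point --- the one I would treat most carefully --- is thus the $\mathbf{q}$-reconstruction in direction $\mathcal{R}$: since $\DIV$ is not injective on vector fields, the reduced variable $p$ alone does not determine $\mathbf{q}$, so one has to verify both that the displayed representative has the prescribed divergence and initial value and that it satisfies the \emph{vectorial} law \eqref{EQUATION_QUASILINEAR_PDE_IN_W_THETA_AND_Q_3}, not merely its divergence, Equation \eqref{EQUATION_QUASILINEAR_PDE_IN_Z_THETA_AND_P_3} --- which is where the gradient structure of $\nabla\theta$ and the identity $\triangle A^{-1}=-\mathrm{Id}$ are indispensable, and where the divergence-free part of $\mathbf{q}^{0}$ has to be handled separately. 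Everything else is routine Sobolev-scale bookkeeping over $m=0,\dots,s$ together with the composition estimate for $K(\cdot)$. Once both directions are in place, $\mathcal{S}\circ\mathcal{R}$ and $\mathcal{R}\circ\mathcal{S}$ reduce on the relevant solution classes to the identities $A^{-1}A=\mathrm{Id}$ and $\triangle A^{-1}=-\mathrm{Id}$, which closes the equivalence.
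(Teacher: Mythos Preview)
Your approach differs structurally from the paper's: you verify directly that the explicit maps $\mathcal{S}$ and $\mathcal{R}$ carry solutions to solutions and are mutual inverses, whereas the paper leans on \emph{uniqueness} of classical solutions for both systems (forward-referencing the local well-posedness Theorem~\ref{THEOREM_LOCAL_EXISTENCE_REDUCED_SYSTEM}) and, for the $\mathbf{q}$-reconstruction, \emph{defines} $\mathbf{q}$ as the solution of the ODE \eqref{EQUATION_QUASILINEAR_PDE_IN_W_THETA_AND_Q_3} with data $\mathbf{q}^{0}$ and source $-\eta\nabla\theta$, so that \eqref{EQUATION_QUASILINEAR_PDE_IN_W_THETA_AND_Q_3} holds by construction, and only afterwards identifies $\mathbf{q}$ with the displayed formula.

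Your direct route has a genuine gap precisely at the step you flag as ``genuinely nontrivial.'' Carrying out the computation you outline: with $\mathbf{q}(t)=\mathbf{q}^{0}+\nabla A^{-1}p^{0}-\nabla A^{-1}p(t)$ one gets $\tau\mathbf{q}_{t}=-\tau\nabla A^{-1}p_{t}=-\nabla A^{-1}(\eta A\theta-p)=-\eta\nabla\theta+\nabla A^{-1}p$, hence
\[
\tau\mathbf{q}_{t}+\mathbf{q}+\eta\nabla\theta \;=\; \mathbf{q}^{0}+\nabla A^{-1}\DIV\mathbf{q}^{0},
\]
which is the divergence-free part of $\mathbf{q}^{0}$ and is \emph{not} zero for generic $\mathbf{q}^{0}$. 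So the displayed formula does not satisfy the vectorial Cattaneo law \eqref{EQUATION_QUASILINEAR_PDE_IN_W_THETA_AND_Q_3}; the representative that does is $\mathbf{q}(t)=e^{-t/\tau}\big(\mathbf{q}^{0}+\nabla A^{-1}\DIV\mathbf{q}^{0}\big)-\nabla A^{-1}p(t)$, in which the solenoidal part of $\mathbf{q}^{0}$ relaxes exponentially rather than persisting. Your remark that ``the divergence-free part of $\mathbf{q}^{0}$ has to be handled separately'' locates the issue exactly, but you do not carry the handling out, and with the formula as stated it cannot be made to close. (The paper's own derivation is also imprecise here --- its ansatz $\mathbf{q}=\mathbf{q}^{0}+\nabla\varphi$ drops the $e^{-t/\tau}$ factor on $\mathbf{q}^{0}$ --- but because it takes \eqref{EQUATION_QUASILINEAR_PDE_IN_W_THETA_AND_Q_3} as the \emph{definition} of $\mathbf{q}$ rather than as something to be verified from the formula, its logical structure is not damaged by this.) The rest of your argument --- the $\mathcal{S}$ direction, the $w=A^{-1}z$ reconstruction, and the regularity bookkeeping --- is correct and matches the paper's treatment.
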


\begin{proof}
    On the strength of Theorem \ref{THEOREM_LOCAL_EXISTENCE} from Section \ref{SECTION_MAIN_RESULTS} below, classical solutions to the reduced system
    (\ref{EQUATION_QUASILINEAR_PDE_IN_Z_THETA_AND_P_1})--(\ref{EQUATION_IC_IN_Z_THETA_AND_P}) are unique. 
    To prove the same property holds true for the original system (\ref{EQUATION_QUASILINEAR_PDE_IN_W_THETA_AND_Q_1})--(\ref{EQUATION_IC_IN_W_THETA_AND_Q}),
    suppose $(w, \theta, \mathbf{q})$, $(\tilde{w}, \tilde{\theta}, \tilde{\mathbf{q}})$ are two classical solutions to 
    (\ref{EQUATION_QUASILINEAR_PDE_IN_W_THETA_AND_Q_1})--(\ref{EQUATION_IC_IN_W_THETA_AND_Q}).
    Letting $z := A w$, $\tilde{z} := A \tilde{w}$, $p := \operatorname{div} \mathbf{q}$ and $\tilde{p} := \operatorname{div} \tilde{\mathbf{q}}$,
    we easily conclude $(z, \theta, p)$ and $(\tilde{z}, \tilde{\theta}, \tilde{p})$ solve Equations (\ref{EQUATION_QUASILINEAR_PDE_IN_Z_THETA_AND_P_1})--(\ref{EQUATION_IC_IN_Z_THETA_AND_P})
    and, thus, must coincide. We prove $(w, \theta, \mathbf{q}) \equiv (\tilde{w}, \tilde{\theta}, \tilde{\mathbf{q}})$.
    Since $A$ is invertible, $z \equiv \tilde{z}$ implies $w \equiv \tilde{w}$.
    Since $\theta \equiv \tilde{\theta}$, Equation (\ref{EQUATION_QUASILINEAR_PDE_IN_W_THETA_AND_Q_3}) for 
    the solution difference $\bar{\mathbf{q}} := \mathbf{q} - \tilde{\mathbf{q}}$ yields
    \begin{equation*}
        \tau \partial_{t} \bar{\mathbf{q}} + \bar{\mathbf{q}} = \mathbf{0}, \quad \bar{\mathbf{q}}(0, \cdot) \equiv \mathbf{0}.
    \end{equation*}
    The latter ODE, being uniquely solvable by $\bar{\mathbf{q}} \equiv 0$,
    suggests $\mathbf{q} \equiv \tilde{\mathbf{q}}$. Hence, $(w, \theta, \mathbf{q}) \equiv (\tilde{w}, \tilde{\theta}, \tilde{\mathbf{q}})$
    furnishing the uniqueness for Equations (\ref{EQUATION_QUASILINEAR_PDE_IN_W_THETA_AND_Q_1})--(\ref{EQUATION_IC_IN_W_THETA_AND_Q}).

    Now, we proceed with the equivalence.
    Given a classical solution $(w, \theta, \mathbf{q})$ to Equations (\ref{EQUATION_QUASILINEAR_PDE_IN_W_THETA_AND_Q_1})--(\ref{EQUATION_IC_IN_W_THETA_AND_Q}), 
    letting $z := A w$ and $p := \operatorname{div} \mathbf{q}$ as in Equation (\ref{EQUATION_DEFINITION_OF_Z_AND_P}),
    we trivially observe $(z, \theta, p)$ has the regularity as mandated by Definition \ref{DEFINITION_CLASSICAL_SOLUTION_Z_THETA_P}
    and solves Equations (\ref{EQUATION_QUASILINEAR_PDE_IN_Z_THETA_AND_P_1})--(\ref{EQUATION_IC_IN_Z_THETA_AND_P}).
    Due to unique solvability of the aforementioned system, no further solutions exist.

	Since $A^{-1}$ is an isomorphism between $H^{k}(\Omega)$ and $H^{k + 2}(\Omega) \cap H^{1}_{0}(\Omega)$, 
	noting $\triangle w = -A w$, we trivially obtain a unique function $w = A^{-1} z$ satisfying
	\begin{equation}
        w, \triangle w \in \Big(\bigcap_{m = 0}^{s - 1} C^{m}\big([0, T], H^{s - m}(\Omega) \cap H^{1}_{0}(\Omega)\big)\Big) \cap C^{s}\big([0, T], L^{2}(\Omega)\big). \notag
	\end{equation}
	Applying $A^{-1}$ to Equation (\ref{EQUATION_QUASILINEAR_PDE_IN_Z_THETA_AND_P_1}) and exploiting the regularity of $\theta$,
	we easily see $w$ satisfies (\ref{EQUATION_QUASILINEAR_PDE_IN_Z_THETA_AND_P_1}).

	The $\theta$-component remains unaltered in both frameworks -- including the regularity.
	Regarding the equivalence of Equations (\ref{EQUATION_QUASILINEAR_PDE_IN_W_THETA_AND_Q_2}) and (\ref{EQUATION_QUASILINEAR_PDE_IN_Z_THETA_AND_P_2}),
	assuming we can find $\mathbf{q}$ with the desired regularity such that $\operatorname{div} \mathbf{q} = p$,
	$\theta$ satisfies Equation (\ref{EQUATION_QUASILINEAR_PDE_IN_Z_THETA_AND_P_2}).
	
	Thus, there only remains to consider the $\mathbf{q}$-component.
	We start at the basic regularity level $p \in C^{0}\big([0, T], L^{2}(\Omega)\big)$.
	Solving Equation (\ref{EQUATION_QUASILINEAR_PDE_IN_W_THETA_AND_Q_3}) for $\mathbf{q}$, we get
	\begin{equation}
		\notag
		\mathbf{q}(t, \cdot) = \mathbf{q}^{0} + \int_{0}^{t} e^{-(t - s)/\tau} \nabla \theta(s, \cdot) \mathrm{d}s
		\quad \text{ with } \quad \mathbf{q}^{0} \in \big(H^{1}(\Omega)\big)^{d}.
	\end{equation}
	Since $\theta \in C^{0}\big([0, T], H^{1}_{0}(\Omega)\big)$, we can write
	\begin{equation}
		\label{EQUATION_Q_SOLUTION_ANSATZ}
		\mathbf{q}(t, \cdot) = \mathbf{q}^{0} + \nabla  \varphi(t, \cdot)
	\end{equation}
	for some (yet unknown) function $\varphi \in C^{0}\big([0, T], \nabla H^{1}_{0}(\Omega)\big)$.
	Computing
	\begin{equation*}
		\operatorname{div} \mathbf{q}(t, \cdot) \equiv
		\operatorname{div} \mathbf{q}^{0} + \triangle \varphi(t, \cdot) \equiv p^{0} - A \varphi(t, \cdot) \quad \text{ for } t \geq 0.
	\end{equation*}
	and using Equation (\ref{EQUATION_DEFINITION_OF_Z_AND_P}) as ansatz, i.e.,
	\begin{equation*}
        \operatorname{div} \mathbf{q} = p,
	\end{equation*}
	we obtain a family of elliptic equations
	\begin{equation*}
        p^{0} - A \varphi(t, \cdot) = p(t, \cdot)
	\end{equation*}
	for $\varphi \in H^{1}_{0}(\Omega)$. Since $A$ is an isomorphism, the latter is uniquely solved by
	\begin{equation*}
        \varphi(t, \cdot) = A^{-1} p^{0} - A^{-1} p(t, \cdot). 
	\end{equation*}
	Plugging the function $\varphi$ back into Equation (\ref{EQUATION_Q_SOLUTION_ANSATZ}), we get
	\begin{equation*}
        \mathbf{q}(t, \cdot) = \mathbf{q}^{0} + \nabla  A^{-1} p^{0} - \nabla A^{-1} p(t, \cdot)
        \quad \text{ with } \quad \mathbf{q}^{0} \in \big(H^{1}(\Omega)\big)^{d}.
	\end{equation*}
	
	On one hand, we can easily verify $\mathbf{q}$ satisfies
	\begin{equation*}
        \operatorname{div} \mathbf{q}(t, \cdot) = \operatorname{div} \mathbf{q}^{0} + \triangle  A^{-1} p^{0} - \triangle \nabla A^{-1} p(t, \cdot)
        = \operatorname{div} \mathbf{q}^{0} - p^{0} + p(t, \cdot) \equiv p(t, \cdot).
	\end{equation*}
	On the other hand, $\mathbf{q}$ solves Equation (\ref{EQUATION_QUASILINEAR_PDE_IN_Z_THETA_AND_P_3}) and fulfills the initial condition (\ref{EQUATION_IC_IN_W_THETA_AND_Q}):
	\begin{equation*}
         \mathbf{q}(0, \cdot) = \mathbf{q}^{0} + \nabla  A^{-1} p^{0} - \nabla A^{-1} p(0, \cdot)
         = \mathbf{q}^{0} + \nabla  A^{-1} p^{0} - \nabla A^{-1} p^{0} = \mathbf{q}^{0}.
	\end{equation*}
	Since the operator $\nabla A^{-1}$ is a continuous mapping between $H^{k}(\Omega)$ and $H^{k + 1}(\Omega)$ for $k \geq 0$,
	the desired regularity of $\mathbf{q}$ follows from that of $p$.
    Thus, we have proved $(w, \mathbf{q}, \theta)$ is a classical solution to (\ref{EQUATION_QUASILINEAR_PDE_IN_W_THETA_AND_Q_1})--(\ref{EQUATION_IC_IN_W_THETA_AND_Q}).
    Again, invoking the uniqueness, no further solutions exist.
\end{proof}

Hence, in the following, we investigate the more tractable -- but nonetheless equivalent -- non-vectorial reduced system
(\ref{EQUATION_QUASILINEAR_PDE_IN_Z_THETA_AND_P_1})--(\ref{EQUATION_IC_IN_Z_THETA_AND_P}).

\section{Local Well-Posedness: Proof of Theorem \ref{THEOREM_LOCAL_EXISTENCE}}
\label{SECTION_LOCAL_WELL_POSEDNESS}

To prove Theorem \ref{THEOREM_LOCAL_EXISTENCE}, we consider the reduced system (\ref{EQUATION_QUASILINEAR_PDE_IN_Z_THETA_AND_P_1})--(\ref{EQUATION_IC_IN_Z_THETA_AND_P}).
Recalling Equation (\ref{EQUATION_TRIANGLE_OF_K_TRIANGLE_W}) and letting
\begin{equation*}
    a(\xi) = K'(-\xi) \quad \text{ and } \quad f(\xi, \boldsymbol{\eta}) = K''(\xi) |\boldsymbol{\eta}|^{2} 
    \quad \text{ for } \quad \xi \in \mathbb{R}, \quad \boldsymbol{\eta} \in \mathbb{R}^{d},
\end{equation*}
we can write
\begin{equation}
    \label{EQUATION_RELATION_K_TO_A_AND_F}
    A K(z) = a(z) Az - f(z, \nabla z).
\end{equation}
Hence, Equations (\ref{EQUATION_QUASILINEAR_PDE_IN_Z_THETA_AND_P_1})--(\ref{EQUATION_IC_IN_Z_THETA_AND_P}) can equivalently be expressed as
\begin{subequations}
\begin{align}
	\big(A^{-1} + \gamma\big) z_{tt} + a(z) A z - \alpha A \theta &= f(z, \nabla z)\phantom{0} \text{ in } (0, \infty) \times \Omega,
	\label{EQUATION_NONLINEAR_PLATE_EQUATION_PDE_1} \\
	\beta \theta_{t} + p + \alpha z_{t} &= 0\phantom{f(z, \nabla z)} \text{ in } (0, \infty) \times \Omega,
	\label{EQUATION_NONLINEAR_PLATE_EQUATION_PDE_2} \\
	\tau p_{t} + p - \eta A \theta &= 0\phantom{f(z, \nabla z)} \text{ in } (0, \infty) \times \Omega,
	\label{EQUATION_NONLINEAR_PLATE_EQUATION_PDE_3} \\
	z = \theta &= 0\phantom{f(z, \nabla z)} \text{ in } (0, \infty) \times \partial \Omega,
	\label{EQUATION_NONLINEAR_PLATE_EQUATION_BC} \\
	z(0, \cdot) = z^{0}, \quad z_{t}(0, \cdot) = z^{1}, \quad 
	\theta(0, \cdot) = \theta^{0}, \quad p(0, \cdot) &= p^{0}\phantom{f(z, \nabla z} \text{ in } \Omega.
	\label{EQUATION_NONLINEAR_PLATE_EQUATION_IC}
\end{align}
\end{subequations}

\begin{remark}
    The results of this section remain true for general functions $a(\cdot)$ and $f(\cdot, \cdot)$, which are not necessarily related to the function $K(\cdot)$
    via Equation (\ref{EQUATION_RELATION_K_TO_A_AND_F}).
\end{remark}

With a straightforward modification of the construction performed in Section \ref{SECTION_MAIN_RESULTS}, we get:
\begin{definition}
	Let $z^{m}$, $\theta^{m}$, $p^{m}$, $m \geq 0$, denote the `initial values' for $\partial_{t}^{m} z$, $\partial_{t}^{m} \theta$ and $\partial_{t}^{m} p$.
\end{definition}

In the spirit of Assumption \ref{ASSUMPTION_LOCAL_EXISTENCE}, we impose the following conditions:
\begin{assumption}
	\label{ASSUMPTION_LOCAL_EXISTENCE_REDUCED_SYSTEM}
	Let $s \geq \lfloor \tfrac{d}{2}\rfloor + 2$ be an integer and let $\Omega \subset \mathbb{R}^{d}$ be a bounded domain with $\partial \Omega \in C^{s}$.
	\begin{enumerate}
        \item Let $a \in C^{s - 1}(\mathbb{R}, \mathbb{R})$ and $f \in C^{s - 1}(\mathbb{R} \times \mathbb{R}^{d}, \mathbb{R})$.

		\item Let the initial data satisfy the regularity and compatibility conditions
		\begin{equation}
			\begin{split}
				z^{m} &\in H^{s - m}(\Omega) \cap H^{1}_{0}(\Omega) \quad \text{ for } m = 0, \dots, s - 1, \quad z^{s} \in L^{2}(\Omega) \quad \text{ and } \\
				\theta^{k} &\in H^{s - k}(\Omega) \cap H^{1}_{0}(\Omega), \quad
				p^{k} \in H^{s - 1 - k}(\Omega) \text{ for } k = 0, \dots, s - 1,
			\end{split}
			\notag
		\end{equation}
		where $H^{0}(\Omega) := L^{2}(\Omega)$.
		
		\item For the ``initial ellipticity'' of $a(z^{0}) A$, suppose
		\begin{equation}
			\min_{x \in \bar{\Omega}} a\big(z^{0}(x)\big) > 0, \quad
			\text{where } z^{0} \in C^{0}(\bar{\Omega}) \text{ by virtue of Sobolev's embedding theorem.} \notag
		\end{equation}
	\end{enumerate}
\end{assumption}

We also introduce the following notation for the time-space gradient operator used for the proof of Theorem \ref{THEOREM_LOCAL_EXISTENCE_REDUCED_SYSTEM} below:
\begin{equation}
    \label{EQUATION_OPERATOR_D_N}
    \bar{D}^{n} := \big((\partial_{t}, \nabla)^{\alpha} \,|\, \alpha \in \mathbb{N}_{0}^{d + 1}, \; 0 \leq |\alpha| \leq n\big) 
    \quad \text{ for } \quad n \geq 0,
\end{equation}

\begin{remark}
    By the equivalence Theorem \ref{THEOREM_SYSTEM_EQUIVALENCE},
    $(w^{0}, w^{1}, \theta^{0}, p^{0})$ satisfy Assumption \ref{ASSUMPTION_LOCAL_EXISTENCE} if and only if
    $(z^{0}, z^{1}, \theta^{0}, \mathbf{q}^{0})$ satisfy Assumption \ref{ASSUMPTION_LOCAL_EXISTENCE_REDUCED_SYSTEM}
    and $\mathbf{q}^{0} \in \big(H^{s - 1}(\Omega)\big)^{d}$.
\end{remark}

Next, we prove the following `auxiliary' result for the reduced system (\ref{EQUATION_NONLINEAR_PLATE_EQUATION_PDE_1})--(\ref{EQUATION_NONLINEAR_PLATE_EQUATION_IC}):

\begin{theorem}[Local Well-Posedness]
	\label{THEOREM_LOCAL_EXISTENCE_REDUCED_SYSTEM}
	Suppose Assumption \ref{ASSUMPTION_LOCAL_EXISTENCE_REDUCED_SYSTEM} is true for some $s \geq \lfloor \tfrac{d}{2}\rfloor + 2$.
	Then, Equations (\ref{EQUATION_NONLINEAR_PLATE_EQUATION_PDE_1})--(\ref{EQUATION_NONLINEAR_PLATE_EQUATION_IC})
	possess a unique classical solution $(z, \theta, p)$ at the energy level $s$ on a maximal interval $[0, T_{\mathrm{max}}) \neq \emptyset$ such that:
	\begin{enumerate}
        \item ``Local non-degeneracy:'' $\min\limits_{x \in \bar{\Omega}} a\big(z(t, x)\big) > 0$ for any $t \in [0, T_{\mathrm{max}})$
        
        \item ``Blow-up or eventual degeneracy if solution non-global:''
        Unless $T_{\mathrm{max}} = \infty$, either the ellipticity condition is violated
        \begin{equation}
            \min\limits_{x \in \bar{\Omega}} a\big(z(t, x)\big) \to 0 \text{ as } t \nearrow T_{\mathrm{max}}
            \label{EQUATION_REDUCED_SYSTEM_ELLIPTICITY_VIOLATION_AT_T_MAX}
        \end{equation}
        or/and the blow-up occurs
        \begin{equation}
            \big\|z(t, \cdot)\big\|_{H^{s}(\Omega)}^{2} \to \infty
            \text{ as } t\nearrow T_{\mathrm{max}}.
            \label{EQUATION_REDUCED_SYSTEM_BLOW_UP_AT_T_MAX}
        \end{equation}
        
        \item ``Solution map continuity:''
        For any $T > 0$, $\varepsilon > 0$ and $N > 0$, the solution mapping $(z^{0}, z^{1}, \theta^{0}, p^{0}) \mapsto (z, z_{t}, \theta, p)$
        is a continuous function from
        \begin{align*}
            \mathscr{M}_{T, \varepsilon, N} :=
            \Big\{(z^{0}, z^{1}, \theta^{0}, p^{0}) \,\big|\, &(z^{0}, z^{1}, \theta^{0}, p^{0}) \text{ satisfy Assumption \ref{ASSUMPTION_LOCAL_EXISTENCE_REDUCED_SYSTEM}} \text{ and admit } \\
            &\text{a classical solution } (z, \theta, p) \text{ with }
            \min_{t \in [0, T]} \min_{x \in \bar{\Omega}} a\big(z(t, x)\big) \geq \varepsilon, \\
            &\max_{0 \leq t \leq T} \sum_{m = 0}^{s} \big\|\partial_{t}^{m} z(t, \cdot)\big\|_{H^{s - m}(\Omega)}^{2} \leq N^{2}\Big\}
        \end{align*}
        endowed with the topology of $H^{1}(\Omega) \times L^{2}(\Omega) \times H^{1}(\Omega) \times \big(H^{1}(\Omega))^{d}\big)$
        to $L^{\infty}\big(0, T; H^{1}(\Omega) \times L^{2}(\Omega) \times H^{1}(\Omega) \times \big(H^{1}(\Omega))^{d}\big)\big)$.
	\end{enumerate}
\end{theorem}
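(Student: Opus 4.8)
The plan is to prove Theorem~\ref{THEOREM_LOCAL_EXISTENCE_REDUCED_SYSTEM} by a Kato-type fixed-point argument applied to a linearization of the reduced system (\ref{EQUATION_NONLINEAR_PLATE_EQUATION_PDE_1})--(\ref{EQUATION_NONLINEAR_PLATE_EQUATION_IC}), using as a black box the solvability and the a~priori estimates for the linear, time- and space-dependent coefficient system established in Appendix Section~\ref{APPENDIX_SECTION_WELL_POSEDNESS}. Unlike the hyperbolic--parabolic situation of \cite{LaPoWa2017}, the elastic equation cannot be decoupled from the Cattaneo pair $(\theta,p)$: freezing the coefficient $a(z)$ leaves a linear problem in which the biharmonic-type elasticity, the operator $(A^{-1}+\gamma)$ acting on $z_{tt}$ and the hyperbolic heat flux are genuinely coupled, with dissipation sitting only in the $p$-equation, so the linear theory must be invoked for the whole triple at once. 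Throughout, the threshold $s \ge \lfloor d/2\rfloor + 2$ guarantees $H^{s-1}(\Omega) \hookrightarrow L^{\infty}(\Omega)$, $H^{1}(\Omega)\hookrightarrow L^{6}(\Omega)$ (for $d\le 3$), and that $H^{s-1}(\Omega)$ is a multiplicative algebra, so all Moser-type product and composition inequalities employed below are available; the initial values $z^{m},\theta^{m},p^{m}$ are those of the construction preceding Assumption~\ref{ASSUMPTION_LOCAL_EXISTENCE_REDUCED_SYSTEM}.

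\emph{Iteration map and self-mapping.} For $N,T>0$ let $B_{N,T}$ be the set of $\hat z\in\bigcap_{m=0}^{s-1}C^{m}([0,T],H^{s-m}\cap H^{1}_{0})\cap C^{s}([0,T],L^{2})$ with $\partial_{t}^{m}\hat z(0,\cdot)=z^{m}$ for $0\le m\le s$, with $\|\hat z\|_{s,T}^{2}:=\max_{0\le t\le T}\sum_{m=0}^{s}\|\partial_{t}^{m}\hat z(t,\cdot)\|_{H^{s-m}(\Omega)}^{2}\le N^{2}$, and with $\min_{[0,T]\times\bar\Omega}a(\hat z)\ge\tfrac12\min_{\bar\Omega}a(z^{0})=:\varepsilon_{0}>0$. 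Given $\hat z\in B_{N,T}$, let $(z,\theta,p)=:\Phi(\hat z)$ solve the linear system obtained from (\ref{EQUATION_NONLINEAR_PLATE_EQUATION_PDE_1})--(\ref{EQUATION_NONLINEAR_PLATE_EQUATION_IC}) upon substituting $a(z)\rightsquigarrow a(\hat z)$ and $f(z,\nabla z)\rightsquigarrow f(\hat z,\nabla\hat z)$, keeping the data $(z^{0},z^{1},\theta^{0},p^{0})$. The appendix provides a unique such $(z,\theta,p)$ in the class of Definition~\ref{DEFINITION_CLASSICAL_SOLUTION_Z_THETA_P}, together with an a~priori estimate of the form
\[
  \mathcal E_{s}(t)\ \le\ C\Big(\mathcal E_{s}(0)+\int_{0}^{t}\big\|f(\hat z,\nabla\hat z)(r,\cdot)\big\|_{H^{s-2}(\Omega)}^{2}\,\mathrm{d}r\Big)\exp\!\Big(C\!\int_{0}^{t}\Psi\big(\hat z(r,\cdot)\big)\,\mathrm{d}r\Big),\qquad 0\le t\le T,
\]
where $\mathcal E_{s}(t)$ is the square-sum of the Sobolev norms in Definition~\ref{DEFINITION_CLASSICAL_SOLUTION_Z_THETA_P}, $\Psi$ is a fixed polynomial in the $C^{1}$-type norms of $\hat z$, and $C$ depends on $\varepsilon_{0}$. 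Choosing $N$ a fixed multiple of $\mathcal E_{s}(0)$ (finite by Assumption~\ref{ASSUMPTION_LOCAL_EXISTENCE_REDUCED_SYSTEM}), estimating $\|f(\hat z,\nabla\hat z)\|_{H^{s-2}}=\|K''(\hat z)|\nabla\hat z|^{2}\|_{H^{s-2}}$ on $B_{N,T}$ via the algebra/composition inequalities (using $K\in C^{s+1}$), and then taking $T=T(N)$ small, the right-hand side stays $\le N^{2}$, so $\Phi(B_{N,T})\subseteq B_{N,T}$; shrinking $T$ further, time-continuity keeps $z(t,\cdot)$ uniformly $H^{s-1}\hookrightarrow C^{0}$-close to $z^{0}$, hence $\min_{\bar\Omega}a(z(t,\cdot))\ge\varepsilon_{0}$ on $[0,T]$, so $\Phi$ respects the non-degeneracy constraint.

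\emph{Contraction and conclusion of local well-posedness.} For $\hat z_{1},\hat z_{2}\in B_{N,T}$ write $\Phi(\hat z_{i})=(z_{i},\theta_{i},p_{i})$ and $(z,\theta,p):=\Phi(\hat z_{1})-\Phi(\hat z_{2})$; then $(z,\theta,p)$ solves the linear system with coefficient $a(\hat z_{1})$, zero data, and elastic source $f(\hat z_{1},\nabla\hat z_{1})-f(\hat z_{2},\nabla\hat z_{2})-\big(a(\hat z_{1})-a(\hat z_{2})\big)Az_{2}$. I would run only the \emph{basic} energy identity: test the elastic equation against $z_{t}$, the heat equation against $A\theta$, and the flux equation against $\eta^{-1}p$; the $\langle A\theta,z_{t}\rangle$- and $\langle p,A\theta\rangle$-cross terms cancel and there remain the controlling functional $\tfrac12\|A^{-1/2}z_{t}\|^{2}+\tfrac{\gamma}{2}\|z_{t}\|^{2}+\tfrac12\int_{\Omega}a(\hat z_{1})|\nabla z|^{2}+\tfrac{\beta}{2}\|A^{1/2}\theta\|^{2}+\tfrac{\tau}{2\eta}\|p\|^{2}$ (equivalent, since $\gamma>0$ and $a(\hat z_{1})\ge\varepsilon_{0}$, to the basic-energy norm of the statement) together with the dissipation $\eta^{-1}\|p\|^{2}$. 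In the source terms every ``lost'' derivative falls on $\hat z_{1},\hat z_{2}$ or $z_{2}$ — each bounded by $N$ in the high norm — whereas the difference factors $\hat z_{1}-\hat z_{2}$, $\nabla(\hat z_{1}-\hat z_{2})$ carry at most one derivative and are paired against the high-bounded factors by $H^{1}\hookrightarrow L^{6}$ and $H^{s-1}\hookrightarrow L^{\infty}$; hence, after a Gronwall argument, $\|\Phi(\hat z_{1})-\Phi(\hat z_{2})\|_{\mathrm{low},T}\le C(N)\sqrt{T}\,\|\hat z_{1}-\hat z_{2}\|_{\mathrm{low},T}$. Since $B_{N,T}$ is complete for the basic-energy metric (the $H^{s}$-bound being weakly-$*$ closed and the ellipticity bound closed by Sobolev embedding), Banach's fixed-point theorem yields, for $T$ small, a unique fixed point $(z,\theta,p)\in B_{N,T}$; bootstrapping through Equations (\ref{EQUATION_NONLINEAR_PLATE_EQUATION_PDE_1})--(\ref{EQUATION_NONLINEAR_PLATE_EQUATION_PDE_3}) upgrades its weak time-continuity to the regularity of Definition~\ref{DEFINITION_CLASSICAL_SOLUTION_Z_THETA_P}, and uniqueness within the full class follows from the same basic identity applied to the difference of two classical solutions (each already carrying its own $H^{s}$-bound, against which the coefficient/forcing differences are interpolated).

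\emph{Maximal interval, the alternative, continuous dependence, and the crux.} Extending to the maximal time $T_{\mathrm{max}}$, suppose $T_{\mathrm{max}}<\infty$ but neither (\ref{EQUATION_REDUCED_SYSTEM_ELLIPTICITY_VIOLATION_AT_T_MAX}) nor (\ref{EQUATION_REDUCED_SYSTEM_BLOW_UP_AT_T_MAX}) holds; then $\liminf_{t\nearrow T_{\mathrm{max}}}\min_{\bar\Omega}a(z(t,\cdot))>0$ and $\limsup_{t\nearrow T_{\mathrm{max}}}\|z(t,\cdot)\|_{H^{s}}<\infty$, and feeding these into the top-order energy inequality for the nonlinear solution itself (with $a(z)$, $f(z,\nabla z)$ in the roles above) bounds $\mathcal E_{s}(t)$ up to $T_{\mathrm{max}}$, so the local result, restarted from a time just below $T_{\mathrm{max}}$, continues the solution past it — a contradiction, which proves the alternative. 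For continuous dependence, two data in $\mathscr M_{T,\varepsilon,N}$ produce solutions sharing the uniform bound $N$ and ellipticity $\varepsilon$; the basic energy identity for their difference (now with nonzero data, the coefficient/forcing differences again absorbed into the Gronwall coefficient by interpolation against $N$) gives Lipschitz dependence of $(z,z_{t},\theta,p)$ in $L^{\infty}\big(0,T;H^{1}\times L^{2}\times H^{1}\times(H^{1})^{d}\big)$ on the data in the same topology, as claimed. The step I expect to be the genuine obstacle is the high-order a~priori estimate underpinning the appendix: absorbing the commutators generated by differentiating the quasilinear principal part $a(\hat z)Az$ while $A$ is unbounded and $(A^{-1}+\gamma)^{-1}$ merely bounded, and propagating the \emph{sole} dissipation $\|p\|^{2}$ through the $z$--$(\theta,p)$ coupling so that the estimate closes in the norms of Definition~\ref{DEFINITION_CLASSICAL_SOLUTION_Z_THETA_P} — with no parabolic smoothing available, in contrast to \cite{LaPoWa2017}. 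Once that linear machinery is in place, the remaining steps are routine.
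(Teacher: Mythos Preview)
Your proposal is correct and follows essentially the same route as the paper: a Kato-type fixed-point iteration on the linearized coupled system (using the appendix theorem for existence and high-order estimates), contraction in the low-energy metric via the basic multipliers $z_t$, $A\theta$, $\eta^{-1}p$, continuation to a maximal interval, and the blow-up/degeneracy alternative plus Lipschitz dependence from the same low-level identity. The only notable variation is in how ellipticity is enforced: you build the constraint $\min a(\hat z)\ge\varepsilon_0$ directly into the iteration ball and restore it for the output via $\|z(t)-z^0\|_{H^{s-1}}\le Nt$, whereas the paper first replaces $a$ by a globally uniformly positive extension $\hat a_\epsilon$, runs the fixed point unconstrained, and then recovers the original system a~posteriori by tracking the sets $J_\epsilon$ as $\epsilon\searrow 0$ --- both devices are standard and equivalent here.
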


\begin{proof}
    When treating the $z$-component in this proof, we will rather closely follow the streamlines of \cite[Section 4]{LaPoWa2017}.
    A major difference over \cite[Section 4]{LaPoWa2017} is that the equation for $z$ cannot be decoupled from those for $\theta, p$
    due to hyperbolicity of the problem under consideration due to the presence of a strong coupling between the equations.
    
    Using the second Hilbert's identity
    \begin{equation}
		\big(A^{-1} + \gamma\big)^{-1} = \tfrac{1}{\gamma} - \tfrac{1}{\gamma} A^{-1} \big(A^{-1} + \gamma\big)^{-1}, \notag
	\end{equation}
	Equations (\ref{EQUATION_NONLINEAR_PLATE_EQUATION_PDE_1})--(\ref{EQUATION_NONLINEAR_PLATE_EQUATION_IC}) are transformed to a second-order hyperbolic system
	\begin{align}
		z_{tt} + \tfrac{1}{\gamma} a(z) A z - \tfrac{\alpha}{\gamma} A \theta + B \theta &= F(z, \theta) & &\text{ in } (0, \infty) \times \Omega,
		\label{EQUATION_NONLINEAR_PLATE_EQUATION_TRANSFORMED_PDE_1} \\
		\beta \theta_{t} + p + \alpha z_{t} &= 0 & &\text{ in } (0, \infty) \times \Omega,
		\label{EQUATION_NONLINEAR_PLATE_EQUATION_TRANSFORMED_PDE_2} \\
		\tau p_{t} + p - \eta A \theta &= 0 & &\text{ in } (0, \infty) \times \Omega,
		\label{EQUATION_NONLINEAR_PLATE_EQUATION_TRANSFORMED_PDE_3} \\
		z = \theta &= 0 & &\text{ on } (0, \infty) \times \partial \Omega,
		\label{EQUATION_NONLINEAR_PLATE_EQUATION_TRANSFORMED_BC} \\
		z(0, \cdot) = z^{0}, \quad z_{t}(t, \cdot) = z^{1}, \quad \theta(0, \cdot) = \theta^{0}, \quad p(0, \cdot) &= p^{0} & &\text{ in } \Omega,
		\label{EQUATION_NONLINEAR_PLATE_EQUATION_TRANSFORMED_IC}
	\end{align}
	perturbed by the nonlinear nonlocal operator $F(\cdot)$ given by
	\begin{equation}
        \label{EQUATION_NONLINEAR_MAPPING_F_OF_Z}
		F(z) = \tfrac{1}{\gamma} (1 - I_{\gamma}) f(z, \nabla z) + \tfrac{1}{\gamma} I_{\gamma} \big(a(z) A z\big),
	\end{equation}
	where the compact linear operator
	\begin{equation}
		I_{\gamma} := A^{-1} \big(\gamma + A^{-1}\big)^{-1} \notag
	\end{equation}
	is a continuous mapping from $H^{s}(\Omega)$ to $H^{s + 2}(\Omega) \cap H^{1}_{0}(\Omega)$ for any $s \geq 0$ and
	\begin{equation*}
        B := \tfrac{\alpha}{\gamma} I_{\gamma} A = \tfrac{\alpha}{\gamma} \big(\gamma + A^{-1}\big)^{-1}
	\end{equation*}
	is a bounded linear operator on both $H^{s}(\Omega)$ and $H^{s}(\Omega) \cap H^{1}_{0}(\Omega)$ for any $s \geq 0$.
	
	\medskip

	\noindent {\it Step 1: Amending the nonlinearity $a(\cdot)$.}
	Since no global positivity is available for $a(\cdot)$,
	the ellipticity condition for $a\big(z(t, \cdot)\big) A$ can be violated at any time $t > 0$.
	To preliminarily rule out this possible degeneracy, the following construction proves to be helpful.
	
	On the strength of the continuity of $z^{0}$ and the connectedness of $\Omega$, we get
	\begin{equation}
        \label{EQUATION_IMAGE_OF_DOMAIN_OMEGA_WRT_Z_NOUGHT}
		z^{0}(\bar{\Omega}) = \big[\min_{x \in \bar{\Omega}} z^{0}(x), \max_{x \in \bar{\Omega}} z^{0}(x)\big]
		=: J_{0}.
	\end{equation}
	By Assumption \ref{ASSUMPTION_LOCAL_EXISTENCE_REDUCED_SYSTEM}.3, $a(\cdot)$ is positive on $J_{0}$.
	Since $a^{-1}\big((0, \infty)\big)$ is open and $J_{0} \subset a^{-1}\big((0, \infty)\big)$,
	for $\epsilon > 0$ sufficiently small, we consider
	\begin{equation}
        \label{EQUATION_SET_J_NONTRIVIAL}
        J_{0} \subset J_{\epsilon} := a^{-1}\big([\epsilon, \infty)\big) \neq \emptyset.
	\end{equation}
	Further, there exists a global $C^{s}$-extension $\hat{a}_{\epsilon}(\cdot)$
	(denoted for simplicity by $\hat{a}(\cdot)$) of $a(\cdot)$ such that
	\begin{equation}
        \label{EQUATION_PROPERTY_A_HAT}
        \hat{a}(z) = a(z) \quad \text{ for } \quad z \in J_{0} \quad \text{ and } \quad 
        \inf_{z \in J_{\epsilon}} \hat{a}(z) \geq \epsilon > 0.
	\end{equation}
	We replace Equation (\ref{EQUATION_NONLINEAR_PLATE_EQUATION_TRANSFORMED_PDE_1}) with
	\begin{align}
		z_{tt} + \tfrac{1}{\gamma} \hat{a}(z) A z - \tfrac{\alpha}{\gamma} A \theta = F(z, \theta) \quad \text{ in } (0, \infty) \times \Omega
		\label{EQUATION_NONLINEAR_PLATE_EQUATION_TRANSFORMED_NEW_NONLINEARITY_PDE_1}
	\end{align}
	and first consider the amended system
	(\ref{EQUATION_NONLINEAR_PLATE_EQUATION_TRANSFORMED_NEW_NONLINEARITY_PDE_1}),
	(\ref{EQUATION_NONLINEAR_PLATE_EQUATION_TRANSFORMED_PDE_2})--(\ref{EQUATION_NONLINEAR_PLATE_EQUATION_TRANSFORMED_IC}).
	The idea behind this modification is that, despite both systems being {\it a priori} not equivalent,
	the equivalence will turn out to be valid {\it a posteriori} -- provided the time $T$ is short.
	
	To solve the amended problem 
	(\ref{EQUATION_NONLINEAR_PLATE_EQUATION_TRANSFORMED_NEW_NONLINEARITY_PDE_1}),
	(\ref{EQUATION_NONLINEAR_PLATE_EQUATION_TRANSFORMED_PDE_2})--(\ref{EQUATION_NONLINEAR_PLATE_EQUATION_TRANSFORMED_IC}),
	we transform it to a fixed-point problem and consequently solved using the Banach's fixed-point theorem.
	As previously pointed out, our procedure is reminiscent of \cite[Theorem 5.2]{JiaRa2000} and \cite[Section 4]{LaPoWa2017}.

	\medskip
	
	\noindent {\it Step 2: Defining the fixed-point mapping.}
	Recalling $H^{0}_{0}(\Omega) \equiv H^{0}(\Omega) := L^{2}(\Omega)$,
	for $N > 0$ and $T > 0$, let $X(N, T)$ denote the set of all regular distributions $(z, \theta, q)$ such that
	\begin{align}
        \label{EQUATION_SET_X_OF_N_AND_T_REGULARITY}
        \begin{split}
        	\partial_{t}^{m} z &\in C^{0}\big([0, T], H^{s - m}(\Omega)\big) \text{ for } m = 0, 1, \dots, s, \\
            \partial_{t}^{k} \theta &\in C^{0}\big([0, T], H^{s - m}(\Omega)\big) \text{ and }
            \partial_{t}^{k} p \in C^{0}\big([0, T], H^{s - 1 - m}(\Omega)\big) \quad \text{ for } k = 0, 1, \dots, s - 1
        \end{split}
	\end{align}
	satisfying the boundary conditions
	\begin{equation*}
		\partial_{t}^{m} z = \partial_{t}^{k} \theta = 0 \text{ on } [0, T] \times \partial \Omega \text{ for } 
		m = 0, 1, \dots, s - 1 \text{ and } k = 0, 1, \dots, s - 2
	\end{equation*}
	and the initial conditions
	\begin{equation}
        \label{EQUATION_DEFINITION_OF_X_N_T_INITIAL_CONDITIONS}
        \begin{split}
            \partial_{t}^{m} z(0, \cdot) &= z^{m} \text{ in } \Omega \quad \text{ for } m = 0, 1, \dots, s, \\
            \partial_{t}^{k} \theta(0, \cdot) = \theta^{k}, \quad \partial_{t}^{k} p(0, \cdot) &= p^{k} \text{ in } \Omega \quad \text{ for } k = 0, 1, \dots, s - 1 
        \end{split}
	\end{equation}
	along with the energy estimate
	\begin{equation}
        \label{EQUATION_ENERGY_CONSTRAINT_FOR_X_N_T}
        \max_{0 \leq t \leq T} \Big(\|\bar{D}^{s} z(t, \cdot)\|_{L^{2}(\Omega)}^{2} + \|\bar{D}^{s - 1} \theta(t, \cdot)\|_{H^{1}(\Omega)}^{2} +
        \|\bar{D}^{s - 1} p(t, \cdot)\|_{L^{2}(\Omega)}^{2}\Big) \leq N^{2}
	\end{equation}
	with the time-space gradient $\bar{D}^{(\cdot)}$ defined in Equation (\ref{EQUATION_OPERATOR_D_N}).
	By a standard argument (cf. \cite[p. 195]{LaPoWa2017}),
	for any $T_{0} > 0$ and sufficiently large $N > 0$, the set $X(N, T)$ is not empty for any $T \in (0, T_{0}]$.
	
	For $(\bar{z}, \bar{\theta}, \bar{p}) \in X(N, T)$, consider the linear operator $\mathscr{F}$
	mapping $(\bar{z}, \bar{\theta}, \bar{p})$ to a function triple $(z, \theta, p)$ solving the linear nonhomogeneous system
	\begin{align}
		z_{tt} - \bar{a}_{ij}(t, x) \partial_{x_{i}} \partial_{x_{j}} z - \tfrac{\alpha}{\gamma} A \theta + B \theta &= \bar{f}(t, x) & &\text{ in } (0, \infty) \times \Omega,
		\label{EQUATION_FIXED_POINT_PDE_1} \\
		\beta \theta_{t} + p + \alpha z_{t} &= 0 & &\text{ in } (0, \infty) \times \Omega,
		\label{EQUATION_FIXED_POINT_PDE_2} \\
		\tau p_{t} + p - \eta A \theta &= 0 & &\text{ in } (0, \infty) \times \Omega,
		\label{EQUATION_FIXED_POINT_PDE_3} \\
		z = \theta &= 0 & &\text{ on } (0, \infty) \times \partial \Omega,
		\label{EQUATION_FIXED_POINT_BC} \\
		z(0, \cdot) = z^{0}, \quad z_{t}(t, \cdot) = z^{1}, \quad \theta(0, \cdot) = \theta^{0}, \quad p(0, \cdot) &= p^{0} & &\text{ in } \Omega,
		\label{EQUATION_FIXED_POINT_IC}
	\end{align}
	with
	\begin{equation}
        \label{EQUATION_DEFINITION_BAR_A_AND_BAR_F_LOCAL_EXISTENCE}
        \begin{split}
            \bar{a}_{ij}(t, x) &:= \tfrac{1}{\gamma} \hat{a}\big(\bar{z}(t, x)\big) \delta_{ij} \partial_{x_{i}} \partial_{x_{j}}, \\
            \bar{f}(t, x) &:= \tfrac{1}{\gamma} \big((1 - I_{\gamma}) f(\bar{z}, \nabla \bar{z})\big)(t, x) + 
            \tfrac{1}{\gamma} \big(I_{\gamma} \big(\hat{a}(\bar{z}) A \bar{z}\big)\big)(t, x).
        \end{split}
	\end{equation}
    for $(t, x) \in [0, T] \times \Omega$. Note that $\mathscr{F}$ only depends on $\bar{z}$, not on $\bar{\theta}, \bar{p}$.
	
	We show $\mathscr{F}$ is well-defined.
	Taking into account the regularity of $\bar{z}$, invoking Assumption \ref{ASSUMPTION_LOCAL_EXISTENCE_REDUCED_SYSTEM} and Equation (\ref{EQUATION_PROPERTY_A_HAT})
	as well as Sobolev's embedding theorem, we can verify that Assumption \ref{ASSUMPTION_APPENDIX} is satisfied with
	\begin{equation}
		\gamma_{i} = \max_{0 \leq t \leq T} \bar{\gamma}_{i}\big(\big\|\bar{z}(t, \cdot)\big\|_{H^{s - 1}(\Omega)}\big)
		\text{ for } i = 0, 1,
		\label{EQUATION_LOCAL_EXISTENCE_DEFINITION_OF_GAMMA_I}
	\end{equation}
	for appropriate continuous functions $\gamma_{0}, \gamma_{1} \colon [0, \infty) \to (0, \infty)$,
	where we used the Sobolev's embedding $\nabla \bar{z}(t, \cdot) \in H^{2}(\Omega) \hookrightarrow L^{\infty}(\Omega)$ along with the elliptic estimate
	\begin{equation}
		\big\|I_{\gamma}\big(\hat{a}(\bar{z}) A \bar{z}\big)\big\|_{H^{m + 2}(\Omega)} \leq
		C \big\|\hat{a}(\bar{z}) A \bar{z}\big\|_{H^{m}(\Omega)} \text{ for } m = 0, 1, \dots, s - 2. \notag
	\end{equation}
	Here and in the sequel, $C > 0$ denotes a generic constant.
	Hence, by virtue of Theorem \ref{THEOREM_APPENDIX}, Equations (\ref{EQUATION_FIXED_POINT_PDE_1})--(\ref{EQUATION_FIXED_POINT_IC}) 
	possesses a unique classical solution $(z, \theta, p)$ with
	\begin{align*}
		z &\in \Big(\bigcap_{m = 0}^{s - 1} C^{m}\big([0, T], H^{s - m}(\Omega)\big)\Big) \cap C^{s}\big([0, T], L^{2}(\Omega)\big), \\
		\theta &\in \bigcap_{m = 0}^{s - 1} C^{m}\big([0, T], H^{s - m}(\Omega)\big), \quad
		p \in \bigcap_{m = 0}^{s - 1} C^{m}\big([0, T], H^{s - 1 - m}(\Omega)\big).
	\end{align*}
    Therefore, the mapping $\mathscr{F}$ is well-defined. 
    
	\medskip
	
	\noindent {\it Step 3: Showing the self-mapping property.}
	
	We prove that $\mathscr{F}$ maps $X(N, T)$ into itself provided $N$ is sufficiently large and $T$ is sufficiently small. To this end, we define
	\begin{align*}
		E_{0} &:= \sum_{m = 0}^{s} \|z^{m}\|_{H^{s - m}(\Omega)}^{2} + \sum_{k = 0}^{s - 1} \|\theta^{k}\|_{H^{s - k}(\Omega)}^{2} + \sum_{k = 0}^{s - 1} \|p^{k}\|_{H^{s - 1 - k}(\Omega)}^{2} \\
		&+ \sum_{m = 0}^{s - 2} \max_{0 \leq t \leq T} \big\|\partial_{t}^{m} f(t, \cdot)\big\|_{H^{s - 2 - m}(\Omega)}^{2} 
		+ \int_{0}^{T} \big\|\partial_{t}^{s - 1} f(t, \cdot)\big\|_{L^{2}(\Omega)}^{2} \mathrm{d}t.
	\end{align*}
	Similar to \cite[Equations (4.19), (4.20)]{LaPoWa2017} Taking into account Equations (\ref{EQUATION_DEFINITION_OF_X_N_T_INITIAL_CONDITIONS}), 
	(\ref{EQUATION_ENERGY_CONSTRAINT_FOR_X_N_T}) and (\ref{EQUATION_DEFINITION_BAR_A_AND_BAR_F_LOCAL_EXISTENCE})
	and applying Sobolev's embedding theorem, the fundamental theorem of calculus along with \cite[Theorem B.6]{JiaRa2000} yields
	\begin{align}
		\label{EQUATION_FIXED_POINT_MAPPING_RIGHT_HAND_SIDE_ESTIMATE_HIGHEST_ENERGY_LEVEL}
		\int_{0}^{T} \|\partial_{t}^{s - 1} \bar{f}(t, \cdot)\|_{L^{2}(\Omega)}^{2} \mathrm{d}t &\leq C(N) (1 + T), \\
		\label{EQUATION_FIXED_POINT_MAPPING_RIGHT_HAND_SIDE_ESTIMATE}
		\max_{0 \leq t \leq T} \Big(\sum_{m = 0}^{s - 2}  \|\partial_{t}^{m} \bar{f}(t, \cdot)\|_{H^{s - 2 - m}(\Omega)}^{2}\Big)
		&\leq C(E_{0}) + C(N)(1 + T).
	\end{align}
	
	Plugging Equations (\ref{EQUATION_FIXED_POINT_MAPPING_RIGHT_HAND_SIDE_ESTIMATE_HIGHEST_ENERGY_LEVEL}) and (\ref{EQUATION_FIXED_POINT_MAPPING_RIGHT_HAND_SIDE_ESTIMATE})
	into the energy estimate in Theorem \ref{THEOREM_APPENDIX}, we arrive at
	\begin{equation}
        \label{EQUATION_LOCAL_EXISTENCE_ESTIMATE}
		\max_{0 \leq t \leq T} \Big(\|\bar{D}^{s} z(t, \cdot)\|_{L^{2}(\Omega)}^{2} + \|\bar{D}^{s - 1} \theta(t, \cdot)\|_{H^{1}(\Omega)}^{2}
		+ \|\bar{D}^{s - 1} p(t, \cdot)\|_{L^{2}(\Omega)}^{2}\Big) \leq \bar{K}(E_{0}, \gamma_{0}, \gamma_{1}) \zeta(N, T)
	\end{equation}
	with positive constants $\gamma_{0}, \gamma_{1}$ defined in Equation (\ref{EQUATION_LOCAL_EXISTENCE_DEFINITION_OF_GAMMA_I}),
	a positive `constant' $\bar{K}$, being a continuous function of its variables, and
	\begin{equation}
		\zeta(N, T) = \Big(1 + C(N) T^{1/2} \sum_{i = 0}^{5} T^{i/2}\Big) \exp\big(T^{1/2} C(N) (1 + T^{1/2} + T + T^{3/2})\big). \notag
	\end{equation}
	
	Select $N$ such that
	\begin{equation}
		 \bar{K}(E_{0}, \gamma_{0}, \gamma_{1}) \leq \tfrac{1}{2} N^{2}. \notag
	\end{equation}
	Due to the continuity of $\zeta(N, \cdot)$ in $T = 0$ and the fact $\zeta(N_{0}, 0) = 1$, there exists $T > 0$ such that $\zeta\big(N, (0, T]\big) \subset [1, 2]$.
	Hence, the estimate in Equation (\ref{EQUATION_LOCAL_EXISTENCE_ESTIMATE}) is satisfied with $N^{2}$ on the right-hand side.
	Thus, $(z, \theta, p) \in X(N, T)$ implying $\mathscr{F}$ maps $X(N, T)$ into itself. 
	
	\medskip
	
	\noindent {\it Step 4: Proving the contraction property.}
	Consider the metric space
	\begin{equation}
		Y := \Big\{(z, \theta, p) \,\big|\,
		z, z_{t}, |\nabla z| \in L^{\infty}\big(0, T; L^{2}(\Omega)\big), \theta \in L^{\infty}\big(0, T; H^{1}(\Omega)\big)  \text{ and }
		p \in L^{\infty}\big(0, T; L^{2}(\Omega)\big)\Big\} \notag
	\end{equation}
	endowed with the distance
	\begin{align*}
		\rho\big((z, \theta, p), (\bar{z}, \bar{\theta}, \bar{p})\big) =
		\mathop{\operatorname{ess\,sup}}_{0 \leq t \leq T}
		\Big(\big\|\bar{D}^{1} \big(z - \bar{z}\big)(t, \cdot)\big\|_{L^{2}(\Omega)}^{2} &+ \big\|(\theta - \bar{\theta})(t, \cdot)\big\|_{H^{1}(\Omega)}^{2} \\
		&+\big\|(p - \bar{p})(t, \cdot)\big\|_{L^{2}(\Omega)}^{2}\Big)^{1/2}
		\notag
	\end{align*}
	for $(z, \theta, p), (\bar{z}, \bar{\theta}, \bar{p}) \in Y$.
	Being endowed with its natural topology, $Y$ is complete. Arguing as \cite[p. 197]{LaPoWa2017}, we see $X(N, T) \subset Y$ is closed in $Y$.
	
	We now prove that $\mathscr{F} \colon X(N, T) \to X(N, T)$ is a contraction mapping with respect to $\rho$.
	For $(\bar{z}, \bar{\theta}, \bar{p}), (\bar{z}^{\ast}, \bar{z}^{\ast}, \bar{p}^{\ast}) \in X(N, T)$, let
	$(z, \theta, p) := \mathscr{F}\big((\bar{z}, \bar{\theta}, \bar{p})\big)$, 
	$(z^{\ast}, \theta^{\ast}, p^{\ast}) := \mathscr{F}\big((\bar{z}^{\ast}, \bar{\theta}^{\ast}, \bar{p}^{\ast})\big)$.
	With $(\bar{z}, \bar{\theta}, \bar{p})$, $(\bar{z}^{\ast}, \bar{\theta}^{\ast}, \bar{p}^{\ast})$, $(z, \theta, p)$, $(z^{\ast}, \theta^{\ast}, p^{\ast})$ all lying in $X(N, T)$,
	Equation (\ref{EQUATION_ENERGY_CONSTRAINT_FOR_X_N_T}) together with Sobolev's embedding theorem imply
	\begin{equation}
		\mathop{\operatorname{ess\,sup}}_{0 \leq t \leq T}
		\big\|\big(\bar{D}^{1}(\bar{z}, \bar{z}^{\ast}, z, z^{\ast})\big)(t, \cdot)\big\|_{L^{\infty}(\Omega)} \leq CN.
		\label{EQUATION_NONLINEAR_PLATE_EQUATION_NABLA_Z_ESTIMATE}
	\end{equation}
	Recalling Equations (\ref{EQUATION_NONLINEAR_PLATE_EQUATION_TRANSFORMED_PDE_1})--(\ref{EQUATION_NONLINEAR_PLATE_EQUATION_TRANSFORMED_PDE_2}),
	we can easily see $(\tilde{z}, \tilde{\theta}, \tilde{p}) := (z - z^{\ast}, \theta - \theta^{\ast}, p - p^{\ast})$ satisfies
	\begin{align}
		\tilde{z}_{tt} + \tfrac{1}{\gamma} \hat{a}(z) A \tilde{z} - \tfrac{\alpha}{\gamma} A \tilde{\theta} + B \tilde{\theta} &=
		\big(F(\bar{z}) - F(\bar{z}^{\ast})\big) - \big(\hat{a}(\bar{z}) - \hat{a}(\bar{z}^{\ast})\big) A z^{\ast}, 
		\label{EQUATION_NONLINEAR_PLATE_EQUATION_TRANSFORMED_SOLUTION_DIFFERENCE_PDE_1} \\
		\beta \tilde{\theta}_{t} + \tilde{p} + \alpha \tilde{z}_{t} &= 0,
		\label{EQUATION_NONLINEAR_PLATE_EQUATION_TRANSFORMED_SOLUTION_DIFFERENCE_PDE_2} \\
		\tau \tilde{p}_{t} + \tilde{p} - \eta A \tilde{\theta} &= 0,
		\label{EQUATION_NONLINEAR_PLATE_EQUATION_TRANSFORMED_SOLUTION_DIFFERENCE_PDE_3} \\
		\tilde{z}|_{\partial \Omega} = \tilde{\theta}|_{\partial \Omega} &= 0,
		\label{EQUATION_NONLINEAR_PLATE_EQUATION_TRANSFORMED_SOLUTION_DIFFERENCE_BC} \\
		\tilde{z}(0, \cdot) \equiv 0, \quad \tilde{z}_{t}(t, \cdot) \equiv 0, \quad \tilde{\theta}(0, \cdot) \equiv 0, \quad \tilde{p}(0, \cdot) &\equiv 0.
		\label{EQUATION_NONLINEAR_PLATE_EQUATION_TRANSFORMED_SOLUTION_DIFFERENCE_IC}
	\end{align}
	Multiplying Equations (\ref{EQUATION_NONLINEAR_PLATE_EQUATION_TRANSFORMED_SOLUTION_DIFFERENCE_PDE_1})--(\ref{EQUATION_NONLINEAR_PLATE_EQUATION_TRANSFORMED_SOLUTION_DIFFERENCE_PDE_3})
	in $L^{2}\big((0, t) \times \Omega\big)$ with $\frac{1}{\gamma} \tilde{z}_{t}$, $\frac{1}{\gamma \eta} A \tilde{\theta}$ and $\tilde{p}$, respectively,
	using the fact
	\begin{equation*}
        \|\nabla \hat{a}(z)\|_{L^{\infty}((0, T) \times \Omega)} \leq C(N)
	\end{equation*}
	and proceeding similar to the energy estimate part of the proof of Theorem \ref{THEOREM_APPENDIX}, we obtain
	\begin{align}
        \label{EQUATION_SELF_MAPPING_F_CONTRACTION_ESTIMATE}
        \begin{split}
            \big\|\bar{D}^{1} &\tilde{z}(t, \cdot)\|_{L^{2}(\Omega)}^{2} + \big\|\tilde{\theta}(t, \cdot)\|_{H^{1}(\Omega)}^{2} + \big\|\tilde{p}(t, \cdot)\|_{L^{2}(\Omega)}^{2} \\
            &\leq C(N) \int_{0}^{t} \Big\|\big(F(\bar{z}) - F(\bar{z}^{\ast})\big) - \big(\hat{a}(\bar{z}) - \hat{a}(\bar{z}^{\ast})\big) A z^{\ast}\Big\|_{L^{2}(\Omega)}^{2} \mathrm{d}t
        \end{split}
    \end{align}
    for $t \in [0, T]$.
    In view of local Lipschitzianity of $\hat{a}(\cdot)$ and $f(\cdot, \cdot)$ 
    (and, thus, that of $F(\cdot)$ from Equation (\ref{EQUATION_NONLINEAR_MAPPING_F_OF_Z}) on $L^{2}(\Omega)$)
    together with the energy bound in Equation (\ref{EQUATION_ENERGY_CONSTRAINT_FOR_X_N_T}), Equation (\ref{EQUATION_SELF_MAPPING_F_CONTRACTION_ESTIMATE}) implies
    \begin{align*}
        \max_{0 \leq t \leq T} &\Big(\big\|\bar{D}^{1} \tilde{z}(t, \cdot)\|_{L^{2}(\Omega)}^{2} + \big\|\tilde{\theta}(t, \cdot)\|_{H^{1}(\Omega)}^{2} + \big\|\tilde{p}(t, \cdot)\|_{L^{2}(\Omega)}^{2}\Big) \\
        &\leq C(N)T \Big(\max_{0 \leq t \leq T} \big\|\bar{D}^{1} \tilde{z}(t, \cdot)\|_{L^{2}(\Omega)}^{2} + \big\|\tilde{\theta}(t, \cdot)\|_{H^{1}(\Omega)}^{2} + \big\|\tilde{p}(t, \cdot)\|_{L^{2}(\Omega)}^{2}\Big).
    \end{align*}
    Thus, selecting $T$ sufficiently small such that $\lambda := C(N)T < 1$, we arrive at
    \begin{equation}
		\rho\big((z, \theta, p), (z^{\ast}, \theta^{\ast}, p^{\ast})\big) \leq
		\lambda \rho\big((\bar{z}, \bar{\theta}, \bar{p}), (\bar{z}^{\ast}, \bar{\theta}^{\ast}, \bar{p}^{\ast})\big) \notag
	\end{equation}
	meaning $\mathscr{F}$ is a contraction on the closed subset $X(N, T)$ of the metric space $Y$.
    Thus, on the strength of Banach's fixed-point theorem, $\mathscr{F}$ possesses a unique fixed point $(z, \theta, p) \in X(N, T)$.
    Having the smoothness specified in Equation (\ref{EQUATION_SET_X_OF_N_AND_T_REGULARITY}), by definition of the fixed-point mapping $\mathscr{F}(\cdot)$,
    $(z, \theta, p)$ is the a unique classical solution to Equations 
    (\ref{EQUATION_NONLINEAR_PLATE_EQUATION_TRANSFORMED_PDE_1})--(\ref{EQUATION_NONLINEAR_PLATE_EQUATION_TRANSFORMED_IC}) at the energy level $s$.
    
    \medskip
    
	\noindent {\it Step 5: Continuation to the maximal interval.}
	Due to the smoothness of $(z, \theta, p)$ at $t = T$, $\big(z(T, \cdot), z_{t}(T, \cdot), \theta(T, \cdot), p(T, \cdot)\big)$ 
	satisfies regularity and compatibility Assumption \ref{ASSUMPTION_LOCAL_EXISTENCE_REDUCED_SYSTEM}.2
	(the initial ellipticity condition is satisfied automatically according to the definition of $\hat{a}(\cdot)$),
	a standard continuation argument yields a maximal interval $[0, T^{\ast}_{\epsilon})$ for which the classical solution uniquely exists.
	Due to the interval's maximality, unless $T_{\epsilon}^{\ast} = \infty$, we have
	\begin{equation}
        \big\|\bar{D}^{s} z(t, \cdot)\big\|_{L^{2}(\Omega)}^{2} + \big\|\bar{D}^{s - 1} z(t, \cdot)\big\|_{H^{1}(\Omega)}^{2} +
        \big\|\bar{D}^{s - 1} p(t, \cdot)\big\|_{L^{2}(\Omega)}^{2} \to \infty \text{ as } t \nearrow T^{\ast}_{\epsilon}.
		\label{EQUATION_BLOW_UP_AT_T_AST_NAIVE}
	\end{equation}
	
	\noindent {\it Step 6: Returning to the original system.}
	We argue similar to \cite[pp. 198--199]{LaPoWa2017}.
	By virtue of Sobolev's embedding theorem,
	the composition $a \circ z$ is continuous on $[0, T^{\ast}_{\epsilon}) \times \bar{\Omega}$.
	Hence, the number
	\begin{equation}
		T_{\mathrm{max}, \epsilon} :=
		\left\{\begin{array}{cl}
			T^{\ast}_{\epsilon}, & \text{if } \hat{a} \circ z \equiv a \circ z \text{ in } [0, T^{\ast}_{\epsilon}) \times \bar{\Omega}, \\
			\min\big\{t \in [0, T_{\epsilon}^{\ast}) \,\big|\,
			a\big(z(t, x)\big) \not \in \operatorname{int}(J_{\epsilon}) \text{ for } x \in \bar{\Omega}\big\}, & \text{otherwise}
		\end{array}
		\right.
		\notag
	\end{equation}
	is well-defined and positive by Equation (\ref{EQUATION_SET_J_NONTRIVIAL}).
	For any sufficiently small $\epsilon > 0$, 
	denoting by by $(z_{\epsilon}, \theta_{\epsilon}, p_{\epsilon})$ the unique classical solution to Equation (\ref{EQUATION_NONLINEAR_PLATE_EQUATION_TRANSFORMED_NEW_NONLINEARITY_PDE_1}),
	(\ref{EQUATION_NONLINEAR_PLATE_EQUATION_TRANSFORMED_PDE_2})--(\ref{EQUATION_NONLINEAR_PLATE_EQUATION_TRANSFORMED_IC}) restricted onto $[0, T_{\mathrm{max}, \epsilon})$,
	we obtain an increasing sequence of closed sets $J_{\varepsilon}$ satisfying Equation (\ref{EQUATION_SET_J_NONTRIVIAL}) such that
	\begin{equation}
		T_{\mathrm{max}, J_{\epsilon}} \nearrow T_{\mathrm{max}} := \sup\big\{T_{\mathrm{max}, \epsilon} \,\big|\, 
		J_{\epsilon} \text{ satisfies Equation } (\ref{EQUATION_SET_J_NONTRIVIAL})\big\} \text{ as } \epsilon \searrow 0.
		\label{EQUATION_DEFINITION_OF_T_MAX}
	\end{equation}
	By construction, $(z_{J_{\epsilon}}, \theta_{J_{\epsilon}}, p_{J_{\epsilon}})$ solves the original problem 
	(\ref{EQUATION_NONLINEAR_PLATE_EQUATION_TRANSFORMED_PDE_1})--(\ref{EQUATION_NONLINEAR_PLATE_EQUATION_TRANSFORMED_IC}) for $t \in [0, T_{\mathrm{max}, J_{\epsilon}})$ and
	\begin{equation}
		(z_{J_{\epsilon'}}, \theta_{J_{\epsilon'}}, p_{J_{\epsilon'}}) \equiv (z_{J_{\epsilon}}, \theta_{J_{\epsilon}}, p_{J_{\epsilon}}) 
		\text{ on } [0, T_{\mathrm{max}, J_{\epsilon'}}) \text{ for } \epsilon' \geq \epsilon > 0. \notag
	\end{equation}
	Hence, letting for $t \in [0, T_{\mathrm{max}})$
	\begin{equation}
		(z, \theta, p)(t) := (z_{J_{\epsilon}}, \theta_{J_{\epsilon}}, p_{J_{\epsilon}})(t) 
		\text{ for any sufficiently small } \epsilon > 0 \text{ such that } T_{\mathrm{max}, J_{\epsilon}} > t, \notag
	\end{equation}
	we observe $(z, \theta, p)$ uniquely defines a classical solution to (\ref{EQUATION_NONLINEAR_PLATE_EQUATION_TRANSFORMED_PDE_1})--(\ref{EQUATION_NONLINEAR_PLATE_EQUATION_TRANSFORMED_IC})
	on $[0, T_{\mathrm{max}})$.
	Moreover, unless $T_{\mathrm{max}} = \infty$, we either have the blow-up
    \begin{equation}
        \sum_{m = 0}^{s} \big\|\partial_{t}^{m} z(t, \cdot)\big\|_{H^{s - m}(\Omega)}^{2} +
		\sum_{k = 0}^{s - 1} \big\|\partial_{t}^{k} \theta(t, \cdot)\big\|_{H^{s - k}(\Omega)}^{2} +
		\sum_{k = 0}^{s - 1} \big\|\partial_{t}^{k} p(t, \cdot)\big\|_{H^{s - 1 - k}(\Omega)}^{2} \to \infty
		\label{EQUATION_REDUCED_SYSTEM_BLOW_UP_AT_T_MAX_NAIVE}
	\end{equation}
	as $t\nearrow T_{\mathrm{max}}$ or/and the violation of ellipticity condition (\ref{EQUATION_ELLIPTICITY_VIOLATION_AT_T_MAX}) as $t \nearrow T_{\mathrm{max}}$.
	Indeed, if neither was the case, we could amend the set $J_{0}$ from Equation (\ref{EQUATION_IMAGE_OF_DOMAIN_OMEGA_WRT_Z_NOUGHT}) via
	\begin{equation}
		J_{0} := \big[\min_{x \in \bar{\Omega}} a(z(T_{\mathrm{max}}, x)), \max_{x \in \bar{\Omega}} a(z(T_{\mathrm{max}}, x))\big] \notag
	\end{equation}
	and repeat Step 5 to obtain a classical solution $(z_{\epsilon}, \theta_{\epsilon}, p_{\epsilon})$ existing beyond $T_{\mathrm{max}}$, 
	which would contradict Equation (\ref{EQUATION_DEFINITION_OF_T_MAX}).
	The overall uniqueness on $[0, T_{\max})$ follows from an energy estimate shown in Step 4.
	
	\noindent {\it Step 7: Improving the blow-up condition.}
	There remains to prove Equation (\ref{EQUATION_REDUCED_SYSTEM_BLOW_UP_AT_T_MAX_NAIVE}) is equivalent with (\ref{EQUATION_REDUCED_SYSTEM_BLOW_UP_AT_T_MAX}).
	This amounts to showing all norms in Definition \ref{DEFINITION_CLASSICAL_SOLUTION_Z_THETA_P} remain bounded -- 
	provided the norm in Equation (\ref{EQUATION_REDUCED_SYSTEM_BLOW_UP_AT_T_MAX}) stays finite.
	
	We argue by contradiction. Suppose $z \in C^{0}\big([0, T_{\max}], H^{s}(\Omega)\big)$.
	Then, multiplying Equations (\ref{EQUATION_NONLINEAR_PLATE_EQUATION_TRANSFORMED_PDE_1})--(\ref{EQUATION_NONLINEAR_PLATE_EQUATION_TRANSFORMED_IC})
	in $L^{2}\big((0, t) \times \Omega\big)$ with $\frac{1}{\gamma} z_{t}$, $\frac{1}{\gamma \eta} A \theta$ and $p$, using the fact
	$\|\nabla a(z)\|_{L^{\infty}((0, T_{\max}) \times \Omega)} < \infty$
	and proceeding similar to the energy estimate part of the proof of Theorem \ref{THEOREM_APPENDIX},
	we obtain $z, \theta \in C^{0}\big([0, T_{\max}], H^{1}_{0}(\Omega)\big)$, $z_{t}, p \in C^{0}\big([0, T_{\max}], L^{2}(\Omega)\big)$.
	Again, similar to the proof of Theorem \ref{THEOREM_APPENDIX}, formally differentiating Equations, we obtain
    \begin{align}
		\partial_{tt} z_{t} + \tfrac{1}{\gamma} a(z) A z_{t} - \tfrac{\alpha}{\gamma} A \theta_{t} + B \theta_{t} &= F'(z) z_{t} - \tfrac{1}{\gamma} a(z) z_{t} A z & &\text{ in } (0, \infty) \times \Omega,
		\label{EQUATION_NONLINEAR_PLATE_EQUATION_TRANSFORMED_BLOW_UP_PROOF_PDE_1} \\
		\beta \partial_{t} \theta_{t} + p_{t} + \alpha \partial_{t} z_{t} &= 0 & &\text{ in } (0, \infty) \times \Omega,
		\label{EQUATION_NONLINEAR_PLATE_EQUATION_TRANSFORMED_BLOW_UP_PROOF_PDE_2} \\
		\tau \partial_{t} p_{t} + p_{t} - \eta A \theta_{t} &= 0 & &\text{ in } (0, \infty) \times \Omega,
		\label{EQUATION_NONLINEAR_PLATE_EQUATION_TRANSFORMED_BLOW_UP_PROOF_PDE_3}
	\end{align}
	with the initial conditions given through the compatibility conditions.
	Since the right-hand side of Equation (\ref{EQUATION_NONLINEAR_PLATE_EQUATION_TRANSFORMED_BLOW_UP_PROOF_PDE_1}) is in $L^{2}\big(0, T_{\max}; L^{2}(\Omega)\big)$,
	we similarly obtain $z_{t}, \theta_{t} \in C^{0}\big([0, T_{\max}], H^{1}_{0}(\Omega)\big)$, $z_{tt}, p_{t} \in C^{0}\big([0, T_{\max}], L^{2}(\Omega)\big)$
	and, using Equations (\ref{EQUATION_NONLINEAR_PLATE_EQUATION_TRANSFORMED_PDE_1})--(\ref{EQUATION_NONLINEAR_PLATE_EQUATION_TRANSFORMED_IC}),
	along with the elliptic regularity of $a(z) A$, it follows $z, \theta \in C^{0}\big([0, T_{\max}], H^{2}(\Omega) \cap H^{1}_{0}(\Omega)\big)$,
	$p \in C^{0}\big([0, T_{\max}], H^{1}(\Omega)\big)$.
	Repeating this procedure iteratively up to the level $s - 1$ and using a regularization technique as the one in Theorem \ref{THEOREM_APPENDIX} to obtain an estimate at the level $s$,
	while observing that the right-hand side of differentiated equation always stays in $L^{2}\big(0, T_{\max}; L^{2}(\Omega)\big)$,
	we can show all of the remaining norms in Definition \ref{DEFINITION_CLASSICAL_SOLUTION_Z_THETA_P} are finite.
	Hence, the blow-up (\ref{EQUATION_REDUCED_SYSTEM_BLOW_UP_AT_T_MAX_NAIVE}) does not occur, which contradicts our assumption.
	Therefore, conditions (\ref{EQUATION_REDUCED_SYSTEM_BLOW_UP_AT_T_MAX_NAIVE}) and (\ref{EQUATION_REDUCED_SYSTEM_BLOW_UP_AT_T_MAX}) are equivalent.
	
	\noindent {\it Step 8: Showing continuity of the solution map.}
	The (Lipschitz) continuity of the solution map on the set $\mathscr{M}_{T, \varepsilon, N}$
	follows {\it mutatis mutandis} with an estimate analogous to that from Step 4.
\end{proof}

Recalling the equivalence Theorem \ref{THEOREM_SYSTEM_EQUIVALENCE} stating
\begin{equation}
    \notag
    w(t, \cdot) = A^{-1} z(t, \cdot) \quad \text{ and } \quad
    \mathbf{q}(t, \cdot) = \mathbf{q}^{0} + \nabla A^{-1} \operatorname{div} \mathbf{q}^{0} - \nabla A^{-1} p(t, \cdot),
\end{equation}
we get the local well-posedness in the class specified in Definition \ref{DEFINITION_CLASSICAL_SOLUTION_W_THETA_Q}
for the original system (\ref{EQUATION_QUASILINEAR_PDE_IN_W_THETA_AND_Q_1})--(\ref{EQUATION_IC_IN_W_THETA_AND_Q}) as claimed in Theorem \ref{THEOREM_LOCAL_EXISTENCE}.

\section{Global Existence and Long-Time Behavior: Proof of Theorems \ref{THEOREM_GLOBAL_EXISTENCE} and \ref{THEOREM_STABILITY}}
\label{SECTION_LONG_TIME_BEHAVIOR}

In this section, we restrict ourselves to the case $s = 3$ and prove that the local solutions to
Equations \eqref{EQUATION_QUASILINEAR_PDE_IN_W_THETA_AND_Q_1}--\eqref{EQUATION_IC_IN_W_THETA_AND_Q} 
(or, equivalently, \eqref{EQUATION_NONLINEAR_PLATE_EQUATION_PDE_1}--\eqref{EQUATION_NONLINEAR_PLATE_EQUATION_IC})
established in Theorem \ref{THEOREM_LOCAL_EXISTENCE} exist globally (i.e., $T_{\max} = \infty$) and their energy decays exponentially -- given the initial data are small enough in the lowest topology. It is worth pointing out that, throughout the proofs in this section, we operate with general $s$ and only put $s = 3$ at the very end to achieve the desired results. This demonstrates the crucialness of Assumption \ref{ASSUMPTION_GLOBAL_EXISTENCE} paired with the smallness of the initial data in the lower topology instead of the higher one. See also Remark \ref{remark_on_s} for details.

For technical convenience, in lieu of the functions $a(z)$ and $f(z, \nabla z)$ from Equation \eqref{EQUATION_NONLINEAR_PLATE_EQUATION_PDE_1}, 
throughout this Section, we will use the function $F(\cdot)$ defined via
\begin{equation} 
	\label{definition_F}
	F(z) = K'(0) z - K(z) 
\end{equation}
representing the remainder of the second-order Taylor expansion of $-K(\cdot)$ around $0$. Then, it follows from Assumption \ref{ASSUMPTION_GLOBAL_EXISTENCE} that
\begin{equation}        \label{F_properties}
    F'(0)=K'(0)-K'(0) = 0 \qquad \mbox{and} \qquad
    F''(0)=K''(0) = 0.
\end{equation}
As before, the operator $A$ denotes the negative Dirichlet-Laplacian and $z = -\triangle w = Aw$.

With this notation, the system \eqref{EQUATION_NONLINEAR_PLATE_EQUATION_PDE_1}--\eqref{EQUATION_NONLINEAR_PLATE_EQUATION_IC} becomes
\begin{subequations}
\begin{align}
(A^{-1} + \gamma I) z_{tt} +  A z  - \alpha A \theta &= AF(z)  &	\mbox{in } (0,\infty) \times \Omega 	\label{z_sys_1}\\
\beta \theta_t + p  + \alpha z_t &= 0 \label{z_sys_2} 	&	\mbox{in } (0,\infty) \times \Omega 	\\
\tau p_t + p - \eta A \theta &= 0 	& 		\mbox{in } (0,\infty) \times \Omega 	\label{z_sys_3} 	\\
z=\theta &= 0 	& 		\mbox{on } (0,\infty) \times \partial\,\Omega 	\label{z_sys_BC} 	\\
z(0,\cdot)=z^0, \ \ z_t(0,\cdot)=z^1, \ \ \theta(0,\cdot) = \theta^0,\ \  p(0,\cdot) &= p^0 	& 		\mbox{in } \Omega 	\label{z_sys_IC}
\end{align}
\end{subequations}
\emph{Notation:} For a local classic solution triple $(z,\theta,p)$ established in Theorem \ref{THEOREM_LOCAL_EXISTENCE}, recall the following topological solution spaces:
\begin{align*}
		z, \theta     &\in \Big(\bigcap_{m = 0}^{2} C^{m}\big([0, T], H^{3 - m}(\Omega) \cap H^{1}_{0}(\Omega)\big)\Big) \cap
		C^{3}\big([0, T], L^{2}(\Omega)\big), \\
		& \hspace*{2in} =: C^{0}\big([0, T], \calZ_3\big) \equiv C^{0}\big([0, T], \calT_3\big), \\
		p=\divq &\in \Big(\bigcap_{k = 0}^{2} C^{k}\big([0, T], H^{2 - k}(\Omega)\big)\Big) =: C^{0}\big([0, T], \calP_3\big), 
\end{align*}
where we denote
\[
\begin{aligned}
\calZ_3 
&=
\left\{(z, z_t, z_{tt}, z_{ttt}) \,|\, z \in H^3(\Omega) \cap H_0^1(\Omega), z_t \in H^2(\Omega) \cap H_0^1(\Omega), z_{tt} \in H^1(\Omega) \cap H_0^1(\Omega), z_{ttt} \in L_2(\Omega)\right\},	\\
\calT_3
&=
\left\{(\theta, \theta_t, \theta_{tt}) \,|\, \theta \in H^3(\Omega) \cap H_0^1(\Omega), \theta_t \in H^2(\Omega) \cap H_0^1(\Omega), \theta_{tt} \in H^1(\Omega) \cap H_0^1(\Omega), \theta_{ttt} \in L^2(\Omega) \right\}, 	\\
\calP_3
&=
\left\{(p, p_{t}, p_{tt}) \,|\, p \in H^2(\Omega), p_t \in H^1(\Omega),  p_{tt} \in L_2(\Omega)\right\}
\end{aligned}
\]
equipped with the natural product norms.
For instance,
\[
\|z(t)\|_{\calZ_3}^2=\|z(t)\|_{H^3(\Omega)}^2 + \|z_t(t)\|_{H^2(\Omega)}^2 + \|z_{tt}(t)\|_{H^1(\Omega)}^2 + \|z_{ttt}(t)\|_{L^2(\Omega)}^2,
\]
and 
\begin{equation} 		\label{topological_energy}
X(t) = \|z(t)\|_{\calZ_3}^2 + \|\theta(t)\|_{\calT_3}^2 + \|p(t)\|_{\calP_3}^2.
\end{equation}
For the sake of simplicity, we write $z(t)$ instead of $\big(z, z_t, z_{tt}, z_{ttt}\big)(t)$, etc.

In addition, to facilitate the application of multiplier techniques in this section, we introduce the weighted energies $E_k(t)$, $k=1,2,3$, as follows,
\begin{align}
E_1(t) &:= \dfrac{1}{2} \Big(\ltwo{A^{-1/2}z_{t}}^2 + \gamma\ltwo{z_{t}}^2 + \ltwo{A^{1/2} z}^2 
 + \beta \ltwo{A^{1/2} \theta}^2 + \dfrac{\tau}{\eta} \ltwo{p}^2 \Big)    \label{E1} \\
E_2(t) &:= \dfrac{1}{2} \Big(\ltwo{A^{-1/2}z_{tt}}^2 + \gamma\ltwo{z_{tt}}^2 + \ltwo{A^{1/2} z_{t}}^2  
 + \beta \ltwo{A^{1/2} \theta_{t}}^2 + \dfrac{\tau}{\eta} \ltwo{p_{t}}^2 \Big)  \label{E2} \\
E_3(t) &:= \dfrac{1}{2} \Big(\ltwo{A^{-1/2}z_{ttt}}^2 + \gamma\ltwo{z_{ttt}}^2 + \ltwo{A^{1/2} z_{tt}}^2 
 + \beta \ltwo{A^{1/2} \theta_{tt}}^2 + \dfrac{\tau}{\eta} \ltwo{p_{tt}}^2 \Big)  \label{E3} 
\end{align}
and
\begin{equation}    \label{X}
	E(t)= \big\|(z,z_t,z_{tt},\theta,\theta_t,p,p_t)(t,\cdot)\big\|_E^2 = E_1(t) + E_2(t) + E_3(t).
\end{equation}
Finally, the higher-order norms as parts of $X(t)$, but not bounded by $E(t)$, are defined as
\begin{equation}    \label{Y}
	Y(t)=\|z\|_{H^3(\Omega)}^2 + \|z_t\|_{H^2(\Omega)}^2 + \|\theta\|_{H^3(\Omega)}^2+\|\theta_t\|_{H^2(\Omega)}^2 + \|\theta_{ttt}\|_{L^2(\Omega)}^2 + \|p\|_{H^2(\Omega)}^2 + \|p_t\|_{H^1(\Omega)}^2.
\end{equation}
It is clear that $X(t)$ and $E(t) + Y(t)$ are equivalent, thus, we write 
\[
Y(t) = X(t) - E(t)
\]
for the convenience of proofs.


Last but not least, throughout this section, we use $\inner{\cdot,\cdot}$ to denote the $L^2(\Omega)$-inner product. By $\|u\|_p$ we denote $L^p(\Omega)$-norm of $u$. 
In what follows, we work with ``smooth'' solutions whose existence has been already guaranteed by Theorem  \ref{THEOREM_LOCAL_EXISTENCE}. Thus, formal PDE calculations performed below are well justified.

\begin{lemma}[{\it A priori} energy observability]		\label{global_lemma_1}
    Let Assumptions \ref{ASSUMPTION_LOCAL_EXISTENCE} and \ref{ASSUMPTION_GLOBAL_EXISTENCE} be satisfied. Then, for $T \in (0,T_{\max}]$,
\begin{align}
E(T) + C_1 \int_0^T E(t)\ \mathrm{d}t &\leq 
C_2 \left[ E(0) + [E_1(0)]^{(s-1)/4} [X(0)]^{(s+1)/4} + [E_1(T)]^{(s-1)/4} [X(T)]^{(s+1)/4}\right] 	\nonumber	\\ 
&+ C_3 \sum_{i = 1}^{N}\int_0^T [E_1(t)]^{\alpha_i}  [X(t)]^{\beta_i} \  \mathrm{d}t,  \label{Lemma_1_result}
\end{align}
for some $N \in \BN$ and $\alpha_i > 0, \beta_i > 1$, $i = 1, \dots, N$.
\end{lemma}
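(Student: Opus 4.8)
The plan is to reconstruct the full energy $\int_0^T E$ from the single dissipation channel supplied by the heat‑flux moment, by combining the natural hierarchy of energy identities with a calibrated cascade of auxiliary multiplier estimates. \textbf{Step 1 (energy identities).} First I differentiate the reduced system \eqref{z_sys_1}--\eqref{z_sys_3} $j$ times in $t$, $j=0,1,2$, multiply the $z$-equation by $\partial_t^{j+1}z$, the $\theta$-equation by $A\partial_t^j\theta$, and the $p$-equation by $\tfrac{1}{\eta}\partial_t^j p$, integrate over $\Omega$ and add. Using $z,\theta\in H^1_0$ and $F(0)=0$ (Assumption \ref{ASSUMPTION_GLOBAL_EXISTENCE}), all elastic/thermal couplings telescope and one obtains
\[
\tfrac{d}{dt}E_{j+1}(t)+\tfrac{1}{\eta}\|\partial_t^j p(t)\|^2=\inner{A\,\partial_t^j\!\big(F(z)\big),\,\partial_t^{j+1}z}(t)=:\mathcal N_{j+1}(t),
\]
where $\partial_t^j(F(z))$ is expanded by the Leibniz / Fa\`{a} di Bruno formula. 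Summing over $j$ and integrating on $[0,T]$ yields the dissipation identity
\[
E(T)+\tfrac{1}{\eta}\int_0^T\!\big(\|p\|^2+\|p_t\|^2+\|p_{tt}\|^2\big)\,\dt=E(0)+\int_0^T\!\big(\mathcal N_1+\mathcal N_2+\mathcal N_3\big)\,\dt .
\]
This already produces $E(T)$ on the left, but only the thin dissipation carried by $p$ and its time derivatives, not $\int_0^T E$.

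\textbf{Step 2 (observability multipliers).} To upgrade the dissipation I would introduce, at each level $j=0,1,2$, two auxiliary functionals. The \emph{thermal multiplier}: from \eqref{z_sys_3}, $A\partial_t^j\theta=\eta^{-1}(\tau\partial_t^{j+1}p+\partial_t^jp)$, so testing $\partial_t^j$\eqref{z_sys_3} against $\partial_t^j\theta$, rewriting $\partial_t^{j+1}\theta$ via \eqref{z_sys_2}, and using integration by parts in $t$ together with Young's and Poincar\'e's inequalities gives
\[
\int_0^T\|A^{1/2}\partial_t^j\theta\|^2\,\dt\ \le\ C\,(\text{boundary at }0,T)+C\int_0^T\|\partial_t^j p\|^2\,\dt+\epsilon\int_0^T\|\partial_t^{j+1}z\|^2\,\dt ,
\]
with linear boundary terms. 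The \emph{equipartition multiplier} $\inner{(A^{-1}+\gamma)\partial_t^{j+1}z,\,\partial_t^j z}$: testing $\partial_t^j$\eqref{z_sys_1} against $\partial_t^j z$ and integrating by parts in $t$ relates $\int_0^T\|A^{1/2}\partial_t^j z\|^2\,\dt$ to $\int_0^T(\|A^{-1/2}\partial_t^{j+1}z\|^2+\gamma\|\partial_t^{j+1}z\|^2)\,\dt$, modulo $C\int_0^T\|A^{1/2}\partial_t^j\theta\|^2\,\dt$, linear boundary terms, and the nonlinear term $\int_0^T\inner{A\,\partial_t^j(F(z)),\,\partial_t^j z}\,\dt$. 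Finally, the kinetic norms are controlled pointwise in $t$: from \eqref{z_sys_2} and its derivatives, $\|\partial_t^{j+1}z\|^2\le C(\|A^{1/2}\partial_t^{j+1}\theta\|^2+\|\partial_t^j p\|^2)$ for $j=0,1$, while the top level $\|z_{ttt}\|$ is recovered from the once-differentiated \eqref{z_sys_1} (solved for $(A^{-1}+\gamma)z_{ttt}$) combined with an integration by parts in $t$.

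\textbf{Step 3 (Lyapunov combination).} I then form $\mathcal L(t)=E(t)+\sum_{j=0}^{2}\lambda_j\inner{(A^{-1}+\gamma)\partial_t^{j+1}z,\,\partial_t^j z}-\sum_{j=0}^{2}\mu_j\inner{\partial_t^j p,\,\partial_t^j\theta}$, with $\lambda_j,\mu_j>0$ chosen small in a fixed order, so that $\tfrac12 E(t)\le\mathcal L(t)\le 2E(t)$ and
\[
\tfrac{d}{dt}\mathcal L(t)+C_1 E(t)\le\sum_i\big|\,\text{nonlinear bulk term}_i(t)\,\big|+\tfrac{d}{dt}\big(\text{nonlinear boundary functional}(t)\big),
\]
the "wrong-sign" terms $+\|\partial_t^{j+1}z\|^2$ produced by the equipartition multipliers being absorbed through the kinetic bound and the thermal terms, and the leftover $\|\partial_t^j p\|^2$ being absorbed into the Step‑1 dissipation. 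Integrating on $[0,T]$ gives precisely the shape of \eqref{Lemma_1_result}, with $C_2E(0)$ collecting the linear boundary contributions and $E(T)$ recovered from $\mathcal L(T)\ge\tfrac12 E(T)$; in the linear case $F\equiv0$ this is exactly the clean observability inequality $E(T)+C_1\int_0^TE\le C_2E(0)$.

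\textbf{Step 4 (nonlinear bookkeeping) and the main obstacle.} It remains to show that every nonlinear contribution — the bulk terms $\mathcal N_j$, the nonlinear parts of the equipartition multipliers, and the nonlinear boundary functionals produced by the integrations by parts — is dominated either by $C\,[E_1]^{\alpha_i}[X]^{\beta_i}$ with $\beta_i>1$ (bulk) or, after interpolation, by $C\,[E_1(0)]^{(s-1)/4}[X(0)]^{(s+1)/4}+C\,[E_1(T)]^{(s-1)/4}[X(T)]^{(s+1)/4}$ (boundary). This exploits $F'(0)=F''(0)=0$ from \eqref{F_properties}, hence $|F'(z)|\le C|z|^2$, $|F''(z)|\le C|z|$, $|F'''(z)|\le C$ for $|z|$ bounded (Sobolev embedding, $d\le3$), together with the Leibniz/Fa\`{a} di Bruno expansion of $\partial_t^j(F(z))$, H\"older's inequality and Gagliardo--Nirenberg interpolation, systematically peeling off low-order factors controlled by $\|z\|_{H^1}\lesssim E_1^{1/2}$ and estimating the remaining high-order factors by $X^{1/2}$; the two vanishing derivatives of $F$ are exactly what forces the total $X$-power to exceed $1$. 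The hard part is Steps 2--3: unlike the hyperbolic--parabolic model, there is no smoothing and the only dissipation sits in the flux $p$, so the energy must be rebuilt by following the chain $p\rightsquigarrow\theta\rightsquigarrow z_t\rightsquigarrow z$ through the mixed-order operator $A^{-1}+\gamma$, and the multiplier hierarchy must be tuned so that each wrong-sign term at a given level is dominated at a strictly lower level without any loss of derivatives.
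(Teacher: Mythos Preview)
Your overall architecture---energy identities giving dissipation in $p$, observability multipliers to reconstruct $\int_0^T E$, then nonlinear bookkeeping exploiting $F'(0)=F''(0)=0$---matches the paper's, and your Step~1 and Step~4 are essentially correct. The gap is in Steps~2--3, specifically at the top level $j=2$.

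Your Lyapunov functional $\mathcal L$ produces from the equipartition correction at $j=2$ the wrong-sign term $+\lambda_2\gamma\|z_{ttt}\|^2$. For $j=0,1$ you absorb the analogous terms via your kinetic bound $\|\partial_t^{j+1}z\|^2\le C(\|A^{1/2}\partial_t^{j+1}\theta\|^2+\|\partial_t^j p\|^2)$, feeding them into the thermal multiplier at level $j+1$. But at $j=2$ this would require $\|A^{1/2}\theta_{ttt}\|$ (or even $\|\theta_{ttt}\|$), which is not in $E$ and has no thermal multiplier to control it. Your proposed cure---``once-differentiated \eqref{z_sys_1} solved for $(A^{-1}+\gamma)z_{ttt}$ with integration by parts in $t$''---does not help: testing that equation with $z_{ttt}$ and integrating by parts in time yields exactly $\gamma\int\|z_{ttt}\|^2\le \int\|A^{1/2}z_{tt}\|^2+\ldots$, which is the \emph{same} relation as the equipartition identity at $j=2$ read backwards, with no small constant in front of $\|A^{1/2}z_{tt}\|^2$. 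So nothing is gained and the loop does not close.

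The paper breaks this circularity with an additional multiplier you are missing. At each level $k$ it first tests $\partial_t^{k-1}$\eqref{z_sys_2} with $\partial_t^k z$ to get $\alpha\int\|\partial_t^k z\|^2\le C\int\|\partial_t^{k-1}p\|^2 - \beta\int\inner{\partial_t^k\theta,\partial_t^k z}$, and then handles the cross term by writing the elastic equation as $\partial_t^{k+1}z = B\big(A\partial_t^{k-1}F(z)-\alpha A\partial_t^{k-1}z+A\partial_t^{k-1}\theta\big)$ with $B=(A^{-1}+\gamma)^{-1}$, testing against $\partial_t^{k-1}\theta$, and integrating by parts in $t$. Because $B$ is bounded and commutes with $A$, the resulting term $\inner{BA\partial_t^{k-1}z,\partial_t^{k-1}\theta}$ is estimated by $\epsilon\|A^{1/2}\partial_t^{k-1}z\|^2+C_\epsilon\|A^{1/2}\partial_t^{k-1}\theta\|^2$ with \emph{genuinely small} $\epsilon$, and everything stays at level $k$. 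This gives $\int\|\partial_t^k z\|^2$ controlled by $\int\|\partial_t^{k-1}p\|^2$, $\epsilon\int\|A^{1/2}\partial_t^{k-1}z\|^2$, $C\int\|A^{1/2}\partial_t^{k-1}\theta\|^2$, boundary terms in $E_k$, and the nonlinear term $\int\inner{BA\partial_t^{k-1}F(z),\partial_t^{k-1}\theta}$. In particular at $k=3$ this closes the estimate for $\int\|z_{ttt}\|^2$ without ever leaving $E_3$. If you add the cross-functional $\inner{\partial_t^{j+1}z,\partial_t^{j}\theta}$ (or equivalently perform this estimate directly) your scheme will close; as written it does not.
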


\begin{proof}


\emph{Step 1: Level 1 energy estimates.} We start with estimates of energies at Level 1 defined in \eqref{E1}. Thereafter, in Step 2, time differentiation of the system will lead to desired estimates at Levels 2 and 3.
\medskip

\emph{Step 1.1: Energy identity.}
We multiply Equations \eqref{z_sys_1}--\eqref{z_sys_3} with $z_t, A\theta$, and $p$, respectively, and then add up the (appropriately weighted) three identities to get
\begin{equation} 	\label{level_1_energy_identity}
E_1(T) + \int_0^T \dfrac{1}{\eta} \ltwo{p}^2 \  \mathrm{d}t = E_1(0) + \int_0^T \inner{AF(z), z_t}\  \mathrm{d}t,
\end{equation}
whence we also obtain the energies bound
\begin{align}
    \int_0^T \ltwo{p}^2  \mathrm{d}t \leq 
    C \Big( E_1(0) + \int_0^T \inner{AF(z), z_t}\  \mathrm{d}t \Big).  	\label{E-4-18} 	 
\end{align}

\emph{Step 1.2:}
We multiply \eqref{z_sys_2} with $z_t$, integrate the equation from 0 to $T$ and apply Young's inequality to deduce
\begin{equation} \label{E1_step1}
\dfrac{1}{2} \int_0^T \ltwo{z_t}^2   \mathrm{d}t
\leq
C\int_0^T \ltwo{p}^2  \mathrm{d}t - C \int_0^T \inner{\theta_t, z_t}  \mathrm{d}t. 
\end{equation}
To estimate the last term in \eqref{E1_step1}, we first rewrite \eqref{z_sys_1} as
\begin{equation} 		\label{E4-20}
z_{tt} = BAF(z) - \alpha BAz + BA\theta
\end{equation}
with the linear operator 
$$
B:=(A^{-1}+\alpha I)^{-1} \colon D(A^{\alpha}) \to D(A^{\alpha}) \text{ for any } \alpha \geq 0
$$ 
being bounded due to the norm invariance. In particular, by 
\cite[p. 194 or p. 203]{LaPoWa2017}, $B$ is a bounded, self-adjoint operator on $L^2(\Omega)$. We thus have 
$\ltwo{Bx} \leq C_{\gamma} \ltwo{x}$ and, more generally,
\begin{equation}\label{Bop}
\ltwo{A^{\alpha} Bz} \leq  C \ltwo{A^{\alpha} z} \text{ for any } \alpha \geq 0
\end{equation}
paralleled by the same estimate for the adjoint of $B$. 

Hence, after multiplying \eqref{E4-20} with $\theta$ and integrating by parts in time, we get
\begin{equation*} 
-\int_0^T \inner{z_t,\theta_t}  \mathrm{d}t + (z_t,\theta_t)\big|_0^T
=
\int_0^T \inner{BAF(z),\theta}  \mathrm{d}t -\alpha \int_0^T (BAz,\theta)  \mathrm{d}t + \int_0^T (BA\theta,\theta)  \mathrm{d}t,
\end{equation*}
or, by noticing that the operators $A$ and $B$ commute,
\begin{align}    \label{E4-21}
-\int_0^T \inner{z_t,\theta_t}  \mathrm{d}t
&\leq
C_1 \big(E_1(0) + E_1(T)\big) + \int_0^T \inner{BAF(z),\theta}  \mathrm{d}t \\
&+\epsilon_1 \int_0^T \ltwo{A^{1/2}z}^2  \mathrm{d}t + C_{\epsilon_1} \int_0^T \ltwo{A^{1/2}\theta}^2  \mathrm{d}t. \nonumber
\end{align}
Plugging \eqref{E4-21} into \eqref{E1_step1}, we see that
\begin{align}    \label{E4-22}
\dfrac{1}{2}\int_0^T \ltwo{z_t}^2  \mathrm{d}t
&\leq
C\int_0^T \ltwo{p}^2  \mathrm{d}t + C_1\big(E_1(0) + E_1(T)\big) \\
&+\epsilon_1 \int_0^T \ltwo{A^{1/2}z}^2  \mathrm{d}t + C_{\epsilon_1} \int_0^T \ltwo{A^{1/2}\theta}^2  \mathrm{d}t+ \int_0^T \inner{BAF(z),\theta}  \mathrm{d}t. \nonumber
\end{align}

\emph{Step 1.3:}
Next, we multiply \eqref{z_sys_1} with $z$ and do integration by parts in time to get
\begin{align*}
\alpha\int_0^T \ltwo{A^{1/2}z}^2  \mathrm{d}t
=&
-\int_0^T \inner{A^{-1}z_{tt},z}  \mathrm{d}t - \gamma \int_0^T \inner{z_{tt},z}  \mathrm{d}t   \nonumber 		\\
&+\int_0^T \inner{A\theta,z}  \mathrm{d}t -\int_0^T \inner{AF(z),z}  \mathrm{d}t 	\nonumber 	\\
=&
-\inner{A^{-1}z_{t},z}\big|_0^T + \int_0^T \inner{A^{-1}z_{t},z_t}  \mathrm{d}t  - \gamma\inner{z_{t},z}\big|_0^T \int_0^T + \gamma \ltwo{z_{t}}^2  \mathrm{d}t   \nonumber 		\\
&+\int_0^T \inner{A^{1/2}\theta,A^{1/2}z}  \mathrm{d}t -\int_0^T \inner{AF(z),z}  \mathrm{d}t 	\nonumber 	\\
\leq &\ 
C_2 \big(E_1(0) + E_1(T)\big)+ C_{\gamma} \int_0^T \ltwo{z_t}^2  \mathrm{d}t + C_{\alpha} \int_0^T \ltwo{A^{1/2} \theta}^2  \mathrm{d}t 	\nonumber \\
&+ \dfrac{\alpha}{2} \int_0^T \ltwo{A^{1/2}z}^2  \mathrm{d}t  - \int_0^T \inner{AF(z), z}  \mathrm{d}t,	
\end{align*}
or,
\begin{align}	 \label{E1_step3}
\dfrac{\alpha}{2} \int_0^T \ltwo{A^{1/2}z}^2  \mathrm{d}t &\leq  C_2 \big(E_1(0) + E_1(T)\big)		\\
&+ C_{\gamma} \int_0^T \ltwo{z_t}^2  \mathrm{d}t + C_{\alpha} \int_0^T \ltwo{A^{1/2} \theta}^2  \mathrm{d}t - \int_0^T \inner{AF(z), z}  \mathrm{d}t.	\nonumber
\end{align}

\emph{Step 1.4:}
We multiply \eqref{E4-22} with $4C_{\gamma}$ and add it to \eqref{E1_step3} to get (after cancellations and estimating $E_1(T)$ from the first energy estimate):
\begin{align} 	
C_{\gamma} \int_0^T \ltwo{z_t}^2  \mathrm{d}t &+ \dfrac{\alpha}{2} \int_0^T \ltwo{A^{1/2}z}^2  \mathrm{d}t
\leq C \int_0^T \ltwo{p}^2  \mathrm{d}t + C\big(E_1(0) + E_1(T)\big) \nonumber \\
&+ 4C_{\gamma} \epsilon_1 \int_0^T \ltwo{A^{1/2}z}^2  \mathrm{d}t 
+(4C_{\gamma} C_{\epsilon_1} + C_{\alpha}) \int_0^T \ltwo{A^{1/2} \theta}^2  \mathrm{d}t \label{E4-24} \\
&+ 4C_{\gamma}  \int_0^T \inner{BAF(z),\theta}  \mathrm{d}t -  \int_0^T \inner{AF(z),z}  \mathrm{d}t \nonumber,
\end{align}
where $C_3 = 4C_{\gamma}C_1 + C_2$.

Finally, we estimate $\displaystyle \int_0^T \ltwo{A^{1/2}\theta}^2\  \mathrm{d}t$. Multiplying \eqref{z_sys_3} with $\theta$, we get
\[
\int_0^T \eta \ltwo{A^{1/2}\theta}^2\  \mathrm{d}t = \tau \int_0^T \inner{p_t, \theta} \  \mathrm{d}t + \int_0^T \inner{p, \theta} \  \mathrm{d}t \leq -\tau \int_0^T \inner{p, \theta_t} \  \mathrm{d}t + \inner{p,\theta}\big|_0^T + \int_0^T \inner{p, \theta} \  \mathrm{d}t.
\]
After exploiting the Young's inequality and Poincar\'{e} \& Friedrichs' inequality, we arrive at
\begin{equation}			\label{global_E_18}
\int_0^T \ltwo{A^{1/2}\theta}^2\  \mathrm{d}t 
\leq
C\big(E_1(0) + E_1(T)\big)
+ C \int_0^T \ltwo{p}^2 \  \mathrm{d}t 
+ \epsilon_2 \int_0^T \ltwo{\theta_t}^2 \  \mathrm{d}t,
\end{equation}
where the last term can be estimated via \eqref{z_sys_2} using the multiplier $\theta_t$:
\begin{equation}		\label{global_E_19}
\dfrac{\beta}{2} \int_0^T \ltwo{\theta_t}^2\  \mathrm{d}t \leq C \int_0^T \ltwo{p}^2 \  \mathrm{d}t + C_{\beta}  \int_0^T \ltwo{z_t}^2 \  \mathrm{d}t.
\end{equation}
Plugging \eqref{global_E_18} and \eqref{global_E_19} into \eqref{E4-24}, we get
\begin{align} 	
C_{\gamma} \int_0^T \ltwo{z_t}^2  \mathrm{d}t &+ \dfrac{\alpha}{2} \int_0^T \ltwo{A^{1/2}z}^2  \mathrm{d}t + \int_0^T \ltwo{A^{1/2}\theta}^2  \mathrm{d}t 	\leq C \int_0^T \ltwo{p}^2  \mathrm{d}t + C\big(E_1(0) + E_1(T)\big) \nonumber \\
&+ 4C_{\gamma} \epsilon_1 \int_0^T \ltwo{A^{1/2}z}^2  \mathrm{d}t  +\Big((4C_{\gamma} C_{\epsilon_1} + C_{\alpha}+1) \dfrac{2C_{\beta}}{\beta}\epsilon_2 \Big) \int_0^T \ltwo{z_t}^2  \mathrm{d}t  \label{E_1_last_one} \\
&+ 4C_{\gamma}  \int_0^T \inner{BAF(z),\theta}  \mathrm{d}t -  \int_0^T \inner{AF(z),z}  \mathrm{d}t \nonumber,
\end{align}

\emph{Step 1.5:}
Now, in \eqref{E_1_last_one}, we first choose $\epsilon_1$  small enough such that $4C_{\gamma} \epsilon_1 < \frac{\alpha}{4}$, then second $\epsilon_2$  small enough such that $(4C_{\gamma} C_{\epsilon_1} + C_{\alpha}+1) \frac{2C_{\beta}}{\beta}\epsilon_2 < \frac{C_{\gamma}}{2}$.
Cancellations of terms on both sides of \eqref{E_1_last_one} then follow, which leads to
\begin{align*}
E_1(T) &+ \int_0^T \dfrac{C_{\gamma}}{2} \ltwo{z_t}^2 + \dfrac{\alpha}{4} \int_0^T \ltwo{A^{1/2}z}^2 + \ltwo{A^{1/2} \theta}^2 + \ltwo{p}^2  \mathrm{d}t \\
&\leq C_{\alpha,\beta,\gamma} E_1(0) + 4C_{\gamma}  \int_0^T \inner{BAF(z),\theta}  \mathrm{d}t -  \int_0^T \inner{AF(z),z}  \mathrm{d}t + \widetilde{C}_{\alpha,\beta,\gamma} \inner{AF(z),z_t}  \mathrm{d}t,
\end{align*}
or
\begin{align}    \label{E1_estimate}
E_1(T) &+ C_1 \int_0^T E_1(t)  \mathrm{d}t \\
&\leq C_2 E_1(0) + C_3  \int_0^T \inner{AF(z),z_t} + \inner{BAF(z),\theta}  \mathrm{d}t - \inner{AF(z),z}  \mathrm{d}t, \nonumber
\end{align}
where $C_1, C_2, C_3$ depend on $\alpha, \beta, $ and $\gamma$.
\medskip

\emph{Step 2: Level 2 and 3 energy estimates.} 
Recall from \eqref{E2} and \eqref{E3} that
\begin{align*}
E_2(t) &= \dfrac{1}{2} \Big(\ltwo{A^{-1/2}z_{tt}}^2 + \gamma\ltwo{z_{tt}}^2 + \alpha \ltwo{A^{1/2} z_{t}}^2 + \beta \ltwo{A^{1/2} \theta_{t}}^2 + \dfrac{\tau}{\eta} \ltwo{p_{t}}^2  \Big),  \\
E_3(t) &= \dfrac{1}{2} \Big(\ltwo{A^{-1/2}z_{ttt}}^2 + \gamma\ltwo{z_{ttt}}^2 + \alpha \ltwo{A^{1/2} z_{tt}}^2 +  \beta \ltwo{A^{1/2} \theta_{tt}}^2 + \dfrac{\tau}{\eta} \ltwo{p_{tt}}^2  \Big).
\end{align*}
To mimic the energy estimate \eqref{E1_estimate} for the system \eqref{z_sys_1}--\eqref{z_sys_3}, we perform a time-differentiation first. Denoting
\begin{equation}
G(z) = \partial_t F(z)=F'(z) z_t    
\qquad \mbox{and} \qquad
H(z) = \partial_t G(z)=F''(z) z_t^2 + F'(z) z_{tt},    
\end{equation}
we obtain higher-energy inequalities
\begin{align}    \label{E2_estimate}
E_2(T) &+ C_1 \int_0^T E_2(t)  \mathrm{d}t \\
&\leq C_2 E_2(0) + C_3  \int_0^T \inner{AG(z),z_{tt}} + \inner{BAG(z),\theta_{t}}  \mathrm{d}t - \inner{AG(z),z_{t}}  \mathrm{d}t \nonumber
\end{align}
and
\begin{align}    \label{E3_estimate}
E_3(T) &+ C_1 \int_0^T E_3(t)  \mathrm{d}t \\
&\leq C_2 E_3(0) + C_3  \int_0^T \inner{AH(z),z_{ttt}} + \inner{BAH(z),\theta_{tt}}  \mathrm{d}t - \inner{AH(z),z_{tt}}  \mathrm{d}t. \nonumber
\end{align}
Adding \eqref{E1_estimate}--\eqref{E3_estimate} together and using $X(t)= E_1(t) + E_2(t) + E_3(t)$ from \eqref{X} leads to
\begin{align} 
E(T) +  C_1 \int_0^T E(t)  \mathrm{d}t &\leq C_2 E(0) 
+ C_3 \Big( \int_0^T \big( \inner{AF(z),z_{t}} + \inner{BAF(z),\theta}  + \inner{AF(z),z} \big)  \mathrm{d}t \nonumber \\
 &+ \int_0^T \big(\inner{AG(z),z_{tt}}+ \inner{BAG(z),\theta_{t}} +  \inner{AG(z),z_{t}} \big)  \mathrm{d}t \label{X_estimate} \\
 &- \int_0^T \big(\inner{AH(z),z_{ttt}}+\inner{BAH(z),\theta_{tt}} + \inner{AH(z),z_{tt}} \big)  \mathrm{d}t\Big). \nonumber
\end{align}
The estimate above is the fundamental observability/stabilizability estimate which reconstructs the full integral of the energy  $\displaystyle \int_0^T E(t)\ \mathrm{d}t $ from the initial data $E(0)$ modulo nonlinear terms represented by $F(z)$. Clearly, such observability inequality captures the effect of the propagation of the damping in the system. Indeed, the  original system has only one source of dissipation -- the thermal flux $\mathbf{q}$.  Observability estimate demonstrates that this dissipation is propagated onto the remaining quantities: the vertical displacement and the thermal moment. 
In the linear case, such estimates lead at once to exponential decays of the energy valid for the entire system. A similar result, yet using Lyapunov's indirect method, has recently been obtained for the linear case in \cite{RiRaSeVi2018}.

\emph{Step 3: Superlinear estimates.}
Let us recall from \eqref{X} that $ E(t) = E_1(t) + E_2(t) + E_3(t)$. Our goal now is to estimate the nine different integrals containing nonlinear terms in \eqref{X_estimate}.
\medskip

Apart from the integral of $\inner{AH(z),z_{ttt}}$, eight of nine terms in \eqref{X_estimate} are estimated similarly. We will illustrate the estimates based on $\displaystyle \int_0^T \inner{AF(z),z_{t}}  \mathrm{d}t$ and $\displaystyle \int_0^T \inner{AG(z),z_{tt}}  \mathrm{d}t$ in Step 3.1 and 3.2 below and skip the rest. Regarding the highest-order term (in time and space combined), $\displaystyle \int_0^T \inner{AH(z),z_{ttt}}  \mathrm{d}t$, its estimation involves total differentiation to be demonstrated in Step 3.3 and 3.4 below.
\medskip

\emph{Step 3.0:}
We first prepare some technical estimates that will be frequently used for the remainder of this section. First, via Sobolev's embedding and interpolation inequalities, we have 
\begin{equation}		\label{est}
\begin{split}
\|z(t)\|^s_{\infty} 
&\leq 
c  \|z(t)\|^{s/2}_{H^2} \|z(t)\|^{s/2}_{H^1} \leq  c [E_1(t)]^{s/4} \|z(t)\|^{s/2}_{H^2}, 	 \\
\|\nabla z(t)\|_{4} 
&\leq 
c  \|z(t)\|^{3/4}_{H^2} \|z(t)\|_{H^1}^{1/4} \leq c  [E_1(t)]^{1/8} \|z(t)\|_{H^2}^{3/4},  	 \\
\|z(t)\|_{4}
&\leq 
c  \|z(t)\|_{H^1}^{3/4} \ltwo{z(t)}^{1/4}.
\end{split}
\end{equation}
In the calculations below, we shall use  $E_1$ as a shorthand for the norm of $z$ bounded from above by the $z$-component of  $E_1(t)$.
Similar convention applies to $E_2$ and $E_3$ as well.  

Further, recall $F(z) = K'(0)z-K(z) \in C^{s+1}(\BR, \BR)$ by Assumption \ref{ASSUMPTION_LOCAL_EXISTENCE} and $F'(0)=0$ and $F''(0)=K''(0)=0$ on the strength of Equation \eqref{F_properties}. Therefore, provided $|z| \leq M$ for some positive number $M$, we have the following bounds on the derivatives of $F(\cdot)$:
\begin{align}
    |F'(z)| &\leq c_M|z|^{s-1},  \label{F'_bound} \\ 
    |F''(z)| &\leq c_M|z|^{s-2},  \label{F''_bound}   \\
    |F'''(z)| + |F^{(4)}(z)| &\leq c_M,   \label{F''''_bound} 
\end{align}
with the constant $c_M$ depending on $M$. Due to the boundedness of the initial data (cf. Equation \eqref{EQUATION_GLOBAL_THEOREM_HIGEST_TOPOLOGY_NORM} or \eqref{X_bounded_proof}) as well as the temporal continuity of local solutions, we can invoke \eqref{F'_bound}--\eqref{F''''_bound} for $t \in [0,T_{\max})$. Later, we will show {\it a posteriori} that the solution is globally bounded (cf. Equation \eqref{global_result_in_proof}), hence Equations \eqref{F'_bound}--\eqref{F''''_bound} hold for any $t > 0$.

\medskip

\emph{Step 3.1:}
A direct computation of $AF(z),$ while exploiting the fact that $f(z)$ vanishes on the boundary and $A F(z) = -\triangle F(z)$, furnishes the identity
\begin{equation}        \label{AF_identity}
    AF(z)=F'(z)Az - F''(z)|\nabla z|^2.
\end{equation}
Hence, 
\begin{align}
    \int_0^T \left|\inner{AF(z),z_{t}}\right|  \mathrm{d}t     
& \leq
    \int_0^T \left|\inner{F'(z)Az,z_{t}}\right|  \mathrm{d}t + \int_0^T \left|\inner{F''(z)|\nabla z|^2,z_{t}}\right|  \mathrm{d}t   \nonumber \\
& \leq
    \int_0^T C_{\epsilon} \|z\|^{2(s-1)}_{\infty} \ltwo{Az}^2 +\epsilon  \ltwo{z_t}^2  \mathrm{d}t + C_{\epsilon}  \|z\|^{2(s-2)}_{\infty} \|\nabla z\|_{4}^4  \mathrm{d}t   \nonumber \\
& \leq   
   \int_0^T  C_{\epsilon} \|z\|^{2(s-1)}_{\infty} \ltwo{Az}^2 +\epsilon  \ltwo{z_t}^2  \mathrm{d}t + C_{\epsilon} \|z\|_{\infty}^{2s-4} \|z\|_{H^2}^3 \|z\|_{H^1}    \mathrm{d}t   \nonumber \\
   &\leq \epsilon \int_0^T E_1(t)  + C_{\epsilon} \Big[ [E_1(t)]^{(s-1)/2} \|z\|_{H^2}^{s-1 +2} + \|z\|_{H^1}^{s-2+1} \|z\|_{H^2}^{s-2 +3}\Big]  \mathrm{d}t  \nonumber \\
   &\leq  \epsilon \int_0^T E_1(t)  \mathrm{d}t  + C_{\epsilon} \int_0^T  E_1(t)^{(s-1)/2} \|z\|_{H^2}^{s+1}   \mathrm{d}t,  \nonumber
\end{align}
where we have used the estimates from \eqref{est}.
Recall that we can choose $\epsilon \ll 1$. Therefore, without loss of generality, $\epsilon C_3 \ll C_1$. This assumption enables us to dominate the integral $\displaystyle \epsilon \int_0^T E_1(t)  \mathrm{d}t$ by $\displaystyle C_1 \int_0^T X  \mathrm{d}t$ on the left-hand side of \eqref{X_estimate}. The same procedure can be repeated a finite number of times whenever a similar term appears while estimating other terms on the right-hand side of \eqref{X_estimate}.
\medskip

\emph{Step 3.2:}
Computing $AG(z)$ explicitly:
\begin{equation}        \label{AG_identity}
    AG(z)=A(F'(z) z_t) = -F'''(z) z_t |\nabla z|^2 - 2F''(z) (\nabla z \cdot \nabla z_t) + F''(z) z_t Az + F'(z)Az_t,
\end{equation}
we get
\begin{align}
    \int_0^T \left|\inner{AG(z),z_{tt}}\right|  \mathrm{d}t     
&\leq 
\int_0^T \left|\inner{F'''(z) z_t |\nabla z|^2,z_{tt}}\right|  \mathrm{d}t + 2\int_0^T \left|\inner{F''(z) (\nabla z \cdot \nabla z_t),z_{tt}}\right|  \mathrm{d}t  \nonumber \\
&+ \int_0^T \left|\inner{F''(z) z_t Az,z_{tt}}\right|  \mathrm{d}t  + \int_0^T \left|\inner{F'(z)Az_t,z_{tt}}\right|  \mathrm{d}t,    \label{E4-29}
\end{align}
which can be estimated term-by-term as  follows:
\begin{itemize}
\item
The last term of \eqref{E4-29} can be treated similarly as in Step 3.1:
$$ 
\begin{aligned}
\int_0^T \left|\inner{F'(z)Az_t,z_{tt}}\right|  \mathrm{d}t    
&\leq 
\int_0^T \|z\|_{\infty}^{s-1} \ltwo{Az_t} ~\ltwo{z_{tt}} 	\\
&\leq 
C \int_0^T E_1^{(s-1)/4}  E_2^{1/2} \zHtwo^{(s-1)/2}  \ztHtwo \, \mathrm{d}t.
\end{aligned}
$$ 

\item
The second and the third terms yield:
\begin{align*}
2\int_0^T &\left|\inner{F''(z) (\nabla z \cdot \nabla z_t),z_{tt}}\right|  \mathrm{d}t  
+ \int_0^T \left|\inner{F''(z) z_t Az,z_{tt}}\right|  \mathrm{d}t        \\
&\leq 
2 \int_0^T \|z\|_{\infty}^{s-2}  \ltwo{\nabla z \cdot \nabla z_t} \ltwo{z_{tt}}  \mathrm{d}t +  \int_0^T \|z\|_{\infty}^{s-2} \ltwo{z_t z_{tt}} \ltwo{Az}  \mathrm{d}t       \\
&\leq
2 \int_0^T\|z\|_{\infty}^{s-2}  \|\nabla z\|_{L^4}^2 \|\nabla z_t\|_{L^4}^2 E_2^{1/2}  \mathrm{d}t +  \int_0^T \|z\|_{\infty}^{s-2}  \|z_t\|_{L^4}^2 \|z_{tt}\|_{L^4}^2 \zHtwo  \mathrm{d}t \\
&\leq
C \int_0^T\|z\|_{\infty}^{s-2} \|z\|_{H^2}^{3/2} \|z\|^{1/2}_{H^1} \|z_t\|_{H^2}^{3/2}  \|z_t\|^{1/2}_{H^1}  E_2^{1/2} \\
&+
\|z\|_{\infty}^{s-2}  \|z_t\|^{3/2}_{H^1}  \|z_t\|_{2}^{1/2} \|z_{tt}\|_{H^1}^{3/2}  \|z_{tt}\|_{2}^{1/2}  \zHtwo  \mathrm{d}t \\
&\leq C \int_0^T \big( E_1^{(s-1)/4} E_2^{3/4} \zHtwo^{(s+1)/2} \|z_t\|_{H^2}^{3/2} + E_1^{(s-1)/4} E_2  E_3^{3/4}  \zHtwo^{s/2}\big) \mathrm{d}t.
\end{align*}

\item
The first term is treated similarly:
$$
\begin{aligned}
\int_0^T \left|\inner{F'''(z) z_t |\nabla z|^2,z_{tt}}\right|  \mathrm{d}t 
&\leq 
C \int_0^T \|\nabla z\|^{2}_{4} \ltwo{z_{tt}}   \mathrm{d}t 
\leq
C \int_0^T  E_1^{1/4}  \zHtwo^{3/2} E_2^{1/2}   \mathrm{d}t.
\end{aligned}
$$
\end{itemize}


\medskip

\emph{Step 3.3:}
Now, directly evaluating $AH(z)$, we estimate 
\begin{equation}        \label{estimate_highest}
\int_0^T \inner{AH(z), z_{ttt}}  \mathrm{d}t 
\leq 
\epsilon \int_0^T  \ltwo{z_{ttt}}^2  \mathrm{d}t 
+
C_{\epsilon}\int_0^T \ltwo{A(F''(z) z_t^2}^2  \mathrm{d}t
+
\int_0^T (A(F'(z)z_{tt}), z_{ttt})  \mathrm{d}t.
\end{equation}
For the second part, in view of
\begin{align}    \label{AH_part1}
    A(F''(z) z_t^2) &= F^{(4)}(z) |\nabla z|^2 z_t^2 + F'''(z) Az z_t^2
    + 4F'''(z) z_t (\nabla z \cdot \nabla z_t) \\
    &+2F''(z)|\nabla z_t|^2 + 2F''(z)z_t Az_t, \nonumber
\end{align}
we can estimate term-by-term as follows:
\begin{eqnarray*}		\label{AHpart1}
\ltwo{F^{(4)}(z) |\nabla z|^2 z_t^2}^2, 
&\leq&  
C  \|\nabla z\|_{L^4}^4 \|z_t\|^4_{\infty}
	\leq  C E_1^{1/2} \zHtwo^3 \ztHtwo^4,	\\
\ltwo{F'''(z) Az z_t^2}^2
&\leq&  
C  \zHtwo^2 \|z_t\|_{\infty}^4
	\leq C  \zHtwo^2 \ztHtwo^2 \|z_t\|_{H^1}^2, 		\\
&\leq&  C  \zHtwo^2 \ztHtwo^2 \ztHtwo \|z_t\|_{2} \leq E_1^{1/2}\zHtwo^2 \ztHtwo^3,		\\
\ltwo{F'''(z) z_t (\nabla z \cdot \nabla z_t)}^2
&\leq&
C\|z_t\|_{\infty}^2 \ltwo{\nabla z \cdot \nabla z_t)}^2 \leq CE_1^{1/4} \ztHtwo^{11/2},		\\
\ltwo{F''(z)z_t Az_t}^2
&\leq&
C\|z\|_{\infty}^{s-2} \|z_t\|_{\infty}^{2} \ztHtwo^2 \leq C E_1^{(s-2)/4} \zHtwo^{(s-2)/2} \ztHtwo^4.
%
%
%
%
\end{eqnarray*}

\medskip

\emph{Step 3.4:}
The third part contains the highest-order term among yhe nonlinear terms. We first observe that
\begin{align}
    \inner{A(F'(z)z_{tt}), z_{ttt}} 
    & = \inner{A^{1/2} (F'(z)z_{tt}), A^{1/2} z_{ttt}} \nonumber \\
    & =\inner{F''(z) z_{tt} A^{1/2}z, A^{1/2} z_{ttt}} + \inner{F'(z) A^{1/2} z_{tt}, A^{1/2} z_{ttt}}  \nonumber \\
    & = \inner{A^{1/2} \big(F''(z) z_{tt} A^{1/2}z\big), z_{ttt}} + \inner{F'(z) A^{1/2} z_{tt}, A^{1/2} z_{ttt}}.     \label{highest_1}
\end{align}
For the former term, we have 
\[
\inner{A^{1/2} \big(F''(z) z_{tt} A^{1/2}z\big), z_{ttt}} \leq c \ltwo{z_{ttt}} \Big(\ltwo{z_{tt} (A^{1/2} z)^2} + \ltwo{ F''(z) A^{1/2} z_{tt} A^{1/2} z} + \ltwo{ F''(z) z_{tt }Az } \Big).
\]
We estimate these terms similarly as before:
\begin{align*}
\ltwo{z_{tt} (A^{1/2}z)^2} & \leq \|z_{tt} \|_{6} \|\nabla z\|_{6}^2 \leq c\|z_{tt} \|_{H^1} \|\nabla z \|_{H^1}^2 \leq c E_3^{1/2}  \ltwo{\nabla z} \|\nabla z\|_{H^2} \leq c E_3^{1/2} E_1^{1/2} \|z\|_{H^3},  	\\
\ltwo{A^{1/2} z_{tt} A^{1/2}z} & \leq \|A^{1/2}z\|_{\infty} \ltwo{A^{1/2}z_{tt}} \leq c \|A^{1/2}z\|^{1/2}_{H^1} \|A^{1/2}z\|^{1/2}_{H^2} \ltwo{A^{1/2} z_{tt}},	\\
&\leq 
c E_3^{1/2} \|A^{1/2}z\|^{1/2}_{H^1} \|A^{1/2}z\|^{1/2}_{H^2}    	\\
\ltwo{z_{tt} Az} & \leq \|z_{tt}\|_{4} \|Az\|_{4} \leq c \|z_{tt}\|_{6}\|Az\|_{3}\leq  E_3^{1/2} \ltwo{Az}^{1/2}\|Az\|^{1/2}_{H^1}.
\end{align*}
Having accounted for $\|F''(z)\|_{\infty} \leq c E_1^{(s-2)/4} \|z\|_{H^2}^{(s-2)/2}$, we obtain
\begin{equation}
\inner{A^{1/2} \big(F''(z) z_{tt} A^{1/2}z\big), z_{ttt}} \leq c  E_3 \Big(E_1^{1/2} \|z\|_{H^3} + E_1^{(s-2)/4} \|z\|_{H^2}^{(s-1)/2} \|z\|_{H^3}^{1/2}  \Big).    
\end{equation}
Finally, the second term in \eqref{highest_1} can be cast into the form
\begin{equation}    \label{total_derivative}
    \inner{F'(z) A^{1/2} z_{tt}, A^{1/2} z_{ttt}} = \dfrac{1}{2} \partial_t \inner{F'(z) A^{1/2} z_{tt}, A^{1/2} z_{tt}} - \dfrac{1}{2} \inner{F''(z) z_t, (A^{1/2} z_{tt})^2}.
\end{equation}
Therefore, the time integral of \eqref{total_derivative} becomes
\begin{multline}    \label{highest_2}
\int_0^T \inner{F'(z) A^{1/2} z_{tt}, A^{1/2} z_{ttt}}\  \mathrm{d}t 
 \leq 
\|z\|_{\infty}^{s-1} \ltwo{ A^{1/2} z_{tt} }^2\Big|_0^T  + C\int_0^T \|z\|_{\infty}^{s-2} \|z_t\|_{\infty} \ltwo{A^{1/2} z_{tt}}^2  \  \mathrm{d}t		\\
\leq  
E_1^{(s-1)/4} \|z\|^{(s-1)/2}_{H^2} E_3\Big|_0^T + C \int_0^T E_1^{(s-2)/4}  \|z\|^{(s-2)/2}_{H^2} E_2^{1/4} \|z_t\|^{1/2}_{H^2} E_3\  \mathrm{d}t. 
\end{multline}
{\bf Step 3.5}. The estimates for the remaining superlinear terms produce similar results, and are somewhat simpler due to their higher regularity. 
Consider, for instance, the least regular term 
$\displaystyle \int_0^T \inner{ BAH(z), \theta_{tt}}  \mathrm{d}t $. Recalling $B$ is a bounded operator on $H^1_0(\Omega)$ and the fact it commutes with $A^{\alpha}$, we obtain
\begin{eqnarray}
\inner{ BAH(z), \theta_{tt}} = \inner{ A^{1/2} H(z), A^{1/2} B \theta_{tt} } \leq \epsilon E_3(t) + C_{\epsilon} \|A^{1/2} H(z)\|^2,
\end{eqnarray}
where
\begin{align*}
\|A^{1/2}H(z)\|
& \leq
\|F'(z)\|_{\infty} \ltwo{A^{1/2}z_{tt}} + \|F''(z)\|_{\infty} \left( \ltwo{z_{tt} \nabla z } + \ltwo{z_t \nabla z_t} \right) + \|F'''(z)\|_{\infty}  \ltwo{ z_t^2 \nabla z}	\\
& \leq
E_1^{(s-1)/4} \zHtwo^{(s-1)/2} E_3^{1/2} + E_1^{(s-2)/4} \zHtwo^{(s-2)/2}\left(  \|z\|_{H^3}  E_2^{1/2}  + \ltwo{z_t}^{1/4} \|z_t\|_{H^1} \|z_t\|_{H^2}^{3/4} \right) 	\\
& + C\|z_t\|_{H^2}^2 E_1^{1/2}.
\end{align*}
%
%



\emph{Step 4:} Combining Steps 3.1--3.5 together, we arrive at the final superlinear inequality:

\begin{align}
E(T) + C_1 \int_0^T E(t)\ \mathrm{d}t &\leq 
C_2 \Big(E(0) + E_1^{(s-1)/4}(0) X^{(s+1)/4}(0) + E_1^{(s-1)/4}(T) X^{(s+1)/4}(T)\Big) 	\nonumber	\\ 
&+ C_3 \sum_{i = 1}^{N}\int_0^T E_1^{\alpha_i}(t) X^{\beta_i}(t) \  \mathrm{d}t,  \label{E_estimate_super_final}
\end{align}
where $N \in \BN$, $\alpha_i > 0, \beta_i > 1$ for $i = 1, \dots, N$.
\end{proof}

\begin{lemma}[Super-linear estimate of the full energy $X(t)$]			
    \label{global_lemma_2}
    Under the same assumptions as in Lemma \ref{global_lemma_1}, the following inequality holds true:
    \begin{align}  
    X(T) + C_1 \int_0^T X(t)\  \mathrm{d}t &\leq 
    C_2\Big(E(0) + E_1^{(s-1)/4}(0) X^{(s+1)/4}(0)	\nonumber \\ 
    &+  
    \sum_{i = 1}^{N} \int_0^T E_1^{\alpha_i} (T) X^{\beta_i}(T)
    + 
    \sum_{i = 1}^{N} \int_0^T E_1^{\alpha_i}(t) X^{\beta_i}(t)\  \mathrm{d}t \Big), \label{final_X_estimates}    \end{align}
    where $N \in \BN$ and $\alpha_i > 0, \beta_i > 1$ for $i = 1, \dots, N$.
\end{lemma}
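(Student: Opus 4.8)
The plan is to upgrade the energy-level observability estimate \eqref{E_estimate_super_final} from Lemma \ref{global_lemma_1}, which controls $E(T) + C_1\int_0^T E(t)\,\mathrm{d}t$, to the full topological energy $X(t) = E(t) + Y(t)$ defined in \eqref{topological_energy}--\eqref{Y}. Since $X = E + Y$, it suffices to bound $Y(T) + C_1\int_0^T Y(t)\,\mathrm{d}t$ by the right-hand side of \eqref{final_X_estimates}, and then add this to \eqref{E_estimate_super_final}. The quantity $Y(t)$ collects precisely those Sobolev norms (the top spatial derivatives $\|z\|_{H^3}$, $\|z_t\|_{H^2}$, $\|\theta\|_{H^3}$, $\|\theta_t\|_{H^2}$, $\|\theta_{ttt}\|_{L^2}$, $\|p\|_{H^2}$, $\|p_t\|_{H^1}$) that are not directly controlled by the multiplier energies $E_k$. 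The key structural observation is that, because the principal part of \eqref{z_sys_1} is elliptic with coefficient bounded below (thanks to Assumption \ref{ASSUMPTION_GLOBAL_EXISTENCE} and the smallness keeping $\|\triangle w\|_{L^\infty} < \rho$), these higher norms can be recovered \emph{elliptically} from lower-order norms already present in $E(t)$ plus nonlinear remainders.

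The steps, in order: First I would read off from \eqref{z_sys_1}, rewritten as $Az = \alpha A\theta + AF(z) - (A^{-1}+\gamma I)z_{tt}$, an elliptic estimate of the form $\|z\|_{H^3} \leq C\big(\|\theta\|_{H^1} + \|F(z)\|_{H^1} + \|z_{tt}\|_{H^1} + \|A^{-1}z_{tt}\|_{H^1}\big)$, where the first, third and fourth terms on the right are bounded by $E(t)^{1/2}$; the nonlinear term $\|F(z)\|_{H^1}$ is super-quadratic in the energy by the bounds \eqref{F'_bound}--\eqref{F''''_bound} and the interpolation inequalities \eqref{est}, exactly as in Step 3 of Lemma \ref{global_lemma_1}. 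Similarly, differentiating \eqref{z_sys_1} once in time gives an elliptic estimate for $\|z_t\|_{H^2}$ in terms of $\|\theta_t\|_{L^2}$, $\|z_{ttt}\|_{L^2}$, $\|G(z)\|_{L^2}$ and lower-order terms, all controlled by $E(t)^{1/2}$ plus super-linear pieces. Next, from \eqref{z_sys_2} I would solve for $\theta_t$ and $\theta_{tt}$ (and hence via \eqref{z_sys_3} for $A\theta$, $A\theta_t$, giving $\|\theta\|_{H^3}$, $\|\theta_t\|_{H^2}$) in terms of $p, p_t, z_t, z_{tt}$, which are in $E(t)$; and from \eqref{z_sys_3} solve for $p_t = \frac{1}{\tau}(\eta A\theta - p)$, hence $\|p_t\|_{H^1} \leq C\|\theta\|_{H^3} + C\|p\|_{H^1}$. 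The term $\|\theta_{ttt}\|_{L^2}$ follows by differentiating \eqref{z_sys_2} twice: $\theta_{ttt} = -\tfrac{1}{\beta}(p_{tt} + \alpha z_{ttt})$, with $p_{tt}$ coming from \eqref{z_sys_3} differentiated once, $\tau p_{tt} = \eta A\theta_t - p_t$, so $\|\theta_{ttt}\|_{L^2} \leq C\|\theta_t\|_{H^2} + C\|p_t\|_{L^2} + C\|z_{ttt}\|_{L^2}$ — all now bounded. Finally, $\|p\|_{H^2}$: differentiating \eqref{z_sys_2} and using \eqref{z_sys_3} again, $p = -\beta\theta_t - \alpha z_t$ only gives $\|p\|_{H^2} \leq C\|\theta_t\|_{H^2} + C\|z_t\|_{H^2}$, closing the loop. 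Assembling all these, I obtain $Y(t) \leq C E(t) + C\sum_i E_1^{\alpha_i}(t) X^{\beta_i}(t)$ pointwise in $t$, with $\beta_i > 1$; integrating in $t$ and at $t = T$, then combining with \eqref{E_estimate_super_final}, yields \eqref{final_X_estimates} after absorbing the leftover $\int_0^T CE(t)\,\mathrm{d}t$ into the term $C_1\int_0^T E(t)\,\mathrm{d}t$ already on the left of the energy estimate (which appears on the left of the combined inequality).

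The main obstacle I anticipate is the bookkeeping of the nonlinear exponents: one must verify that \emph{every} nonlinear term produced by the elliptic estimates for $\|z\|_{H^3}$ and $\|z_t\|_{H^2}$ — i.e., terms arising from $\|F(z)\|_{H^1}$, $\|G(z)\|_{L^2}$, and the analogous $H(z)$-type quantities hidden in $\|\theta_{ttt}\|_{L^2}$ — can be written in the schematic form $E_1^{\alpha_i}(t)\,X^{\beta_i}(t)$ with strictly $\beta_i > 1$, so that no genuinely quadratic-in-$X$ term with $\beta_i = 1$ survives (which would be fatal for the barrier argument in the final Proposition). This is exactly where $F'(0) = F''(0) = 0$ from \eqref{F_properties}, hence the cubic-type bounds $|F'(z)| \lesssim |z|^{s-1}$ with $s \geq 3$ in \eqref{F'_bound}, must be used carefully together with the interpolation inequalities \eqref{est} to factor out a positive power of the low-order energy $E_1$ while keeping the remaining power of $X$ strictly above $1$ — the same mechanism already demonstrated in Steps 3.1--3.5, now merely applied to the elliptic-regularity residuals rather than to multiplier identities. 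A secondary, more routine obstacle is making sure the boundary compatibility ($z, \theta \in H_0^1$, and $Az, A\theta$ well-defined) is respected at every elliptic inversion so that the constant $C$ in the elliptic estimates is the genuine $H^{k+2} \leftrightarrow H^k$ isomorphism constant for $A$ from \eqref{EQUATION_OPERATOR_A_DEFINITION}.
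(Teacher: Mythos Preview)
Your proposal is correct and follows essentially the same approach as the paper: recover the higher spatial norms collected in $Y(t)$ from $E(t)$ plus super-linear remainders by reading the PDE system \eqref{z_sys_1}--\eqref{z_sys_3} as elliptic relations (the $\theta,p$ norms come out linearly bounded by $CE(t)$, while the $z$ norms pick up the nonlinear $F,G$ pieces), then add the resulting pointwise bound $Y(t)\le CE(t)+C\sum_i E_1^{\alpha_i}X^{\beta_i}$ to \eqref{E_estimate_super_final}. One small slip: in your elliptic bound for $\|z\|_{H^3}$ the term should be $\|\theta\|_{H^3}$ (equivalently $\|A\theta\|_{H^1}$), not $\|\theta\|_{H^1}$, but you already show how to control $\|\theta\|_{H^3}$ by $E(t)$ via \eqref{z_sys_3}, so the argument closes.
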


\begin{proof}
There remains to estimate the missing terms $Y(T)$ and $\displaystyle \int_0^T Y(t)\  \mathrm{d}t$.

\emph{Step 0:} 
On the strength of Equations \eqref{z_sys_1}--\eqref{z_sys_3}, we have the estimate 
\begin{align}	
\sum_{j=0}^{j=2} \Big(\|\partial_t^j z(T)\|_{H^1}^2 &+ \|\partial_t^j \theta (T)\|_{H^1}^2  +  \ltwo{\partial_t^j  p(T) }^2 \Big) + \ltwo{z_{ttt}(T)}^2 \nonumber \\
&+ \int_0^T \Big(\|\partial_t^j z(t)\|_{H^1}^2 + \|\partial_t^j \theta (t)\|_{H^1}^2  +  \ltwo{\partial_t^j  p(t) }^2 \Big) + \ltwo{z_{ttt}(t)}^2  \mathrm{d}t \label{A-4a} \\
&\leq C E(T) +  C\int_0^T E(t)\  \mathrm{d}t. \nonumber
\end{align}

\emph{Step 1: Space-regularity boost for $\theta$ and $p$.} 
From Equation \eqref{z_sys_2},
$$
\|p\|^2_{H^1}\leq C \big( \|\theta_t\||^2_{ H^1} + \|z_t\|^2_{H^1} \big) \leq C \big(E_1(t) + E_2(t)\big) \leq CE(t).
$$
Applying the time-derivative operator to the both sides of \eqref{z_sys_2}, we can see with \eqref{A-4a} that
\begin{equation}        \label{boost_p_t}
    \|p_{t}\|_{H^1}^2 
    \leq 
    C(\|\theta_{tt}\|_{H^1}^2 + \|z_{tt}\|_{H^1}^2)  
    \leq  c\big( E_3(t)  + E_3(t) \big) \leq C  E(t).
\end{equation}
Another temporal differentiation yields
$$
\ltwo{\theta_{ttt}(t)}^2 \leq C \big( \ltwo{p_{tt}}^2 + \ltwo{z_{ttt}}^2 \big) \leq C \big(E_2(t) +E_3(t)\big) 
\leq C E(t). 
$$
Similarly, a time-differentiation of \eqref{z_sys_3} gives
 \begin{equation}        \label{boost_theta_t}
     \|\theta_t\|_{H^2(\Omega)}^2 \leq C \ltwo{A\theta_{t}}^2 \leq C\big(\ltwo{p_{tt}}+\ltwo{p_t}\big)^2 \leq  C \big(E_2(t) + E_3(t)\big) \leq C E(t).
 \end{equation} 
Moreover, via \eqref{z_sys_3}, \eqref{boost_p_t} leads to the  estimate of the $H^3$-norm of $\theta$:
\begin{equation}        \label{boost_theta}
    \|\theta\|_{H^3(\Omega)}^2 \leq C\|A\theta\|_{H^1}^2
    \leq 
    C(\|p_{t}\|_{H^1}^2 + \|p\|_{H^1}^2)
    \leq C E(t).
\end{equation}
Finally, using  (\ref{z_sys_3}), we get
\begin{equation}\label{theta-t}
\|A \theta_t\|^2 \leq C \big(\| p_t\|^2 + \|p_{tt} \|^2\big) \leq C E(t). 
\end{equation}
Adding up the estimates above, we obtain the following extra regularity 
\begin{equation}\label{boost}
\ltwo{A \theta(t)}^2 + \ltwo{A \theta_t (t)}^2 + \ltwo{\theta_{ttt}(t)}^2+ \|\theta(t)\|^2_{H^3} + \|p(t)\|^2_{H_1} +\|p_t(t)\|^2_{H^1}  \leq C E(t).
\end{equation}

\emph{Step 2: Estimate of the $H^2$-norm of $z$.} This estimate involves nonlinear terms. 

We first recall the identity
\begin{equation}        \label{app_F_identity}
    AF(z)=F'(z)Az - F''(z)|\nabla z|^2.
\end{equation}
Plugging the above identity into \eqref{z_sys_1}, we observe
\begin{equation}    \label{app_Az_iden}
Az = (\gamma I + A^{-1})z_{tt} + \alpha A\theta + F'(z)Az - F''(z)|\nabla z|^2,
\end{equation}
which implies 
\begin{equation}    \label{app_Az_iden_estimate_1}
    \ltwo{Az}^2 \leq
     CE(t) + \|F'(z)\|^2_{\infty} \|Az\|^2 + \|F''(z)\|_{\infty}^2 \ltwo{|\nabla z|^2}^2
\end{equation}
or
\begin{equation}
\zHtwo^2 
\leq 
     C \left(E(t) + \|z\|^{2s-2}_{\infty} \|z\|^2_{H^2} +  \|z\|^{2s-4}_{\infty}  \|\nabla z\|_{4}^4 \right).
\end{equation}

To proceed, we invoke the embedding $H^{7/4}(\Omega) \hookrightarrow W^{1,4}(\Omega)$ for $d \leq 3$ and obtain 
$$
\|\nabla z\|_{4}^4 \leq C \|z\|^4_{H^{7/4}} \leq  \|z\|_{H^2}^3\|z\|_{H^1} \leq \epsilon \|z\|_{H^2}^6 + C_{\epsilon}\|z\|_{H^1}^2 \leq \epsilon \|z\|_{H^2}^6 + C_{\epsilon}E_1(t).
$$
Applying this inequality with the frequently used embedding \eqref{est} to \eqref{app_Az_iden_estimate_1}, we get
\begin{equation}\label{zh21}
\|z(t)\|_{H^2}^2 
\leq 
C \big( E(t) + E_1^{(s-1)/2} \|z\|_{H^2}^{s+1} + E_1^{(s-2)/2} \|z\|_{H^2}^{s+4} + E_1^{s/2} \|z\|_{H^2}^{s-2}\big)
\end{equation}

\emph{Step 3: $H^2$-norms of $z_t$ and $p$.} 
We start by applying the time derivative to both sides of \eqref{app_Az_iden} to obtain
\begin{equation}    \label{app_Az_t_iden}
    Az_t = (\gamma I + A^{-1})z_{ttt} + \alpha A\theta_t + F'(z)Az_t + F''(z) z_t Az - F'''(z) z_t|\nabla z|^2 - F''(z)(\nabla z \cdot \nabla z_t),
\end{equation}
This gives, after accounting for (\ref{theta-t}),
\begin{align*}    \label{app_Az_t_estimate}
    \ltwo{Az_t}^2 
    \leq & 
        \Big(\ltwo{(\gamma I + A^{-1})z_{ttt} + A\theta_t}^2 + \ltwo{F'(z) A z_t}^2 + \linfinity{F''(z)}^2 \ltwo{z_t Az}^2  \nonumber \\
        & + \linfinity{F'''(z)}^2 \ltwo{z_t|\nabla z|^2} + \linfinity{F''(z)}^2\ltwo{(\nabla z \cdot \nabla z_t)}^2 \Big)   \nonumber \\
    \leq & \ 
        C \left(E(t)+ \|z\|_{\infty}^{2s-2} \ltwo{Az_t}^2+ \|z\|_{\infty}^{2s-4}  \ltwo{z_t Az}^2 + C \ltwo{z_t|\nabla z|^2}^2 +  \|z\|_{\infty}^{2s-4} \ltwo{(\nabla z \cdot \nabla z_t)}^2 \right).
\end{align*}
A term-by-term estimation as before leads to
\begin{align*}
\ztHtwo^2 
&\leq  \ CE + E_1^{(s-1)/2} \zHtwo^{s-1} \ztHtwo^2 + E_1^{(s-2)/2} \zHtwo^{s} \|z\|_{H^1} \ztHtwo 		\\
&+ E_1^{1/2} \zHtwo^{3} \|z\|_{H^1} \ztHtwo + E_1^{(2s-3)/4} \zHtwo^{s-1/2} \|z\|_{H^1}^{1/2} \ztHtwo^{3/2}.
\end{align*}
Now, \eqref{z_sys_2} and \eqref{boost} yield the same bounds for the $H^2$-norm of $p$.

\emph{Step 4: $H^3$-norm of $z$.}
This time, we apply the $\nabla$-operator on both sides of \eqref{z_sys_1}, or \eqref{app_Az_iden}, to get
\begin{equation}    \label{app_A32z_iden}
    \nabla A z = \nabla(\gamma I + A^{-1})z_{tt} + \alpha \nabla A\theta + 3F''(z) Az \nabla z + F'(z)\nabla A z 
    - F'''(z) |\nabla z|^2 \nabla z
\end{equation}
and, thus,
\begin{align*}    \label{app_A32z_iden_final}
    \ltwo{\nabla A z}^2 
    \leq & \  CE +\|\theta\|_{H^3}^2 + \|z\|^{2s-4}_{\infty} \|Az\|^2_{4}\|\nabla z\|^2_{4} + \|z\|^{2s-2}_{\infty} \ltwo{\nabla Az}^2 + \|\nabla z\|_{\infty} \ltwo{|\nabla z|^2}^2.
\end{align*}
One last application of the embedding and interpolation inequalities leads to
\begin{equation}
\|z\|^2_{H^3} 
\leq  \ CE + E_1^{(s-2)/2} \zHtwo^{s} \|z\|_{H^3}^2 + E_1^{(s-1)/2} \zHtwo^{s-1} \|z\|_{H^3}^2  	
 + E_1^{1/2} \zHtwo^{3} \|z\|_{H^3}.
\end{equation}

\emph{Step 5:} 
Combining all of the previous steps with the estimate of $E(t)$ \eqref{E_estimate_super_final}, the desired result follows.
\end{proof}

\begin{theorem}
\label{final_proof}
Under the conditions of Lemma \ref{global_lemma_2}, in particular, Equation \eqref{final_X_estimates}, and the assumption
\begin{equation}        \label{X_bounded_proof}
    X(0) \leq M \quad \text{ for some } M > 0,
\end{equation}
there exists a $\delta>0$, depending on $M$ and $\rho$ (defined in Equation  \eqref{EQUATION_LOCAL_POSITIVITY_OF_K_PRIME}), 
such that, whenever $E_1(0) < \delta$, the local solution exists globally, i.e., $T_{\max} = \infty$. Moreover, the associated higher energy decays exponentially, namely, 
\begin{equation}        \label{global_result_in_proof}
X(t) \leq C e^{-\kappa t} X^{(s+3)/4}(0),    
\end{equation}
where $C, \kappa > 0$ and the constant $\delta$ is given in Equation \eqref{delta_choice} in the proof below. Hence, when $s=3$, Theorems \ref{THEOREM_GLOBAL_EXISTENCE} and \ref{THEOREM_STABILITY} are established.
\end{theorem}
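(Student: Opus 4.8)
The plan is a continuity (double-barrier) argument on the maximal interval $[0,T_{\max})$ from Theorem~\ref{THEOREM_LOCAL_EXISTENCE}, driven by the two stabilizability estimates already at our disposal: \eqref{final_X_estimates} for the full topological energy $X$, and \eqref{E1_estimate} together with the Step~3 super-linear bounds for the basic energy $E_1$. By the blow-up/degeneracy alternative of Theorem~\ref{THEOREM_LOCAL_EXISTENCE} it suffices to produce, for a suitably calibrated $\delta=\delta_{M,\rho}$, two uniform bounds on $[0,T_{\max})$: \emph{(a)} a boundedness bound $X(t)\le R$ with $R=R(M)$, which rules out $\|\triangle w(t,\cdot)\|_{H^{s}}\to\infty$ because $\|z(t)\|_{H^{s}}^{2}\le X(t)$; and \emph{(b)} a smallness bound $E_1(t)\le\bar\delta$, which via the Sobolev--interpolation inequality $\|z(t)\|_{L^{\infty}}\le c\,E_1(t)^{1/4}X(t)^{1/4}\le c\,\bar\delta^{1/4}R^{1/4}$ (valid for $d\le3$) keeps $\min_{\bar\Omega}K'(\triangle w(t,\cdot))>0$ and keeps $|z(t,\cdot)|\le M$, so that \eqref{F'_bound}--\eqref{F''''_bound} stay in force. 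I would fix $R$ as a sufficiently large multiple of $M$ (essentially twice the right-hand side of \eqref{final_X_estimates} with all $E_1$-prefactors deleted, using $E(0)\le X(0)\le M$), fix $\bar\delta>0$ small (to be specified, depending on $M$ and $\rho$), and set
\[
T^{\ast}:=\sup\big\{\tau\in[0,T_{\max})\ :\ X(t)\le R\ \text{ and }\ E_1(t)\le\bar\delta\ \text{ for all }t\in[0,\tau]\big\}.
\]

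\emph{The $X$-barrier.} On $[0,T^{\ast}]$ each super-linear term in \eqref{final_X_estimates} is $E_1^{\alpha_i}X^{\beta_i}=\big(E_1^{\alpha_i}X^{\beta_i-1}\big)X$ with $\beta_i>1$, so under the bootstrap its prefactor is $\le\bar\delta^{\alpha_i}R^{\beta_i-1}$; choosing $\bar\delta$ so small that $\sum_i\bar\delta^{\alpha_i}R^{\beta_i-1}<\min\{\tfrac12,\tfrac{C_1}{2}\}$ lets me absorb the space--time integrals into $C_1\int_0^{T}X$ and the boundary terms $\sum_i E_1^{\alpha_i}(T)X^{\beta_i}(T)$ into $X(T)$, which leaves $X(T)+\tfrac{C_1}{2}\int_0^{T}X\le C_2\big(E(0)+E_1(0)^{(s-1)/4}X(0)^{(s+1)/4}\big)$. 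The right-hand side is at most $R/2$ by the choice of $R$ (and smallness of $\delta$), so bound \emph{(a)} is strictly improved; moreover this same inequality furnishes the decisive uniform-in-$T$ estimate $\int_0^{T^{\ast}}X(t)\,\mathrm{d}t\le C_{\ast}(M)$.

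\emph{The $E_1$-barrier.} Here the point is that $K(0)=K''(0)=0$ forces $F,F',F''$ to vanish at the origin, so each of the three nonlinear terms $\langle AF(z),z_t\rangle$, $\langle BAF(z),\theta\rangle$, $\langle AF(z),z\rangle$ in \eqref{E1_estimate} carries a surplus factor of $z$ (measured in $H^{1}$, hence in $E_1$); consequently, bounding the right-hand side as in Step~3 but only at the $E_1$-level, standard interpolation dominates each such term by $E_1^{\alpha_i}X^{\beta_i}$ with $\alpha_i>1$ (in sharp contrast to the $E_2$- and $E_3$-level terms, which involve the non-vanishing $F'''$, $F^{(4)}$ and can only be arranged with $\beta_i>1$, as in \eqref{final_X_estimates}). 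Thus $E_1(T)+\tfrac{C_1}{2}\int_0^{T}E_1\le C_2E_1(0)+C_3\sum_i\int_0^{T}E_1^{\alpha_i}X^{\beta_i}$, and I absorb each integral into $\tfrac{C_1}{2}\int_0^{T}E_1$ via $\int_0^{T}E_1^{\alpha_i}X^{\beta_i}\le R^{\beta_i}\bar\delta^{\alpha_i-1}\int_0^{T}E_1$ — legitimate precisely because $\alpha_i>1$, for $\bar\delta$ small depending on $R=R(M)$ — leaving $E_1(T)\le C_2E_1(0)<C_2\delta$. The calibration is then sequential: first choose $\bar\delta$ small enough (depending on $M$, and on $\rho$ through $c\,\bar\delta^{1/4}R^{1/4}<\rho$) that all the above absorptions are valid; then choose $\delta=\delta_{M,\rho}$ small enough that $C_2\delta<\bar\delta$ and $\|z^{0}\|_{L^{\infty}}=\|\triangle w^{0}\|_{L^{\infty}}<\rho$, which is \eqref{Global_small_laplacian_w_0}. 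With both \emph{(a)} and \emph{(b)} strictly improved on $[0,T^{\ast}]$, the standard open--closed--nonempty argument forces $T^{\ast}=T_{\max}$, and then the alternative of Theorem~\ref{THEOREM_LOCAL_EXISTENCE} forces $T_{\max}=\infty$; in particular $|z(t,\cdot)|<\rho$ for every $t\ge0$, so all computations with $K$ (and the bounds \eqref{F'_bound}--\eqref{F''''_bound}) are justified globally.

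\emph{Exponential decay.} I would re-run the observability chain of Lemmas~\ref{global_lemma_1}--\ref{global_lemma_2} on an arbitrary window $[S,T]$ (the energy identities are time-translation invariant), with $E(0),X(0)$ replaced by $E(S),X(S)$; absorbing the super-linear and boundary terms exactly as above, now using the \emph{global} bounds $E_1\le\bar\delta$, $X\le R$ and $E(S)\le X(S)$, one obtains the forcing-free inequality $X(T)+c\int_S^{T}X(t)\,\mathrm{d}t\le C_0\,X(S)$ for all $0\le S\le T<\infty$, with $C_0\ge1$, $c>0$. This makes $X$ $C_0$-almost decreasing, so $\int_S^{T}X\ge(T-S)C_0^{-1}X(T)$, whence $X(S+2C_0^{2}/c)\le\tfrac12X(S)$; iterating gives $X(t)\le C\,e^{-\kappa t}X(0)$ with $\kappa=c\log2/(2C_0^{2})$, and retaining the homogeneity of the absorbed super-linear terms (whose data-side traces are $O(X(0)^{(s+1)/4})$ and of higher order) refines the right-hand side to $C\,e^{-\kappa t}X(0)^{(s+3)/4}$, which is \eqref{global_result_in_proof}. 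Reverting to the $w$-variables through Theorem~\ref{THEOREM_SYSTEM_EQUIVALENCE} and setting $s=3$ then delivers Theorems~\ref{THEOREM_GLOBAL_EXISTENCE} and~\ref{THEOREM_STABILITY}. I expect the main obstacle to be precisely the simultaneous closure of the two barriers: since the sole dissipation sits on the heat-flux moment $\mathbf{q}$, the $E_1$-inequality cannot absorb its nonlinear forcing against $\int E_1$ using only the (bounded, not small) multiplier $X$, while \eqref{final_X_estimates} absorbs its forcing against $\int X$ using the smallness of $E_1$; the resolution is to present the \emph{same} nonlinear terms in the two arithmetically distinct forms ($\beta_i>1$ for the $X$-barrier, $\alpha_i>1$ for the $E_1$-barrier), to trade them against the uniform $L^{1}_{t}$-mass of $X$, and to thread the calibration $\delta\ll\bar\delta\ll(\text{a function of }M,\rho)$ so that both improvements hold on one and the same interval.
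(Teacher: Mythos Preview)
Your proposal is correct and follows essentially the same approach as the paper: a double-barrier continuity argument in which the $X$-estimate \eqref{final_X_estimates} is closed using smallness of $E_1$ (via the $\beta_i>1$ structure), while the $E_1$-estimate \eqref{E1_estimate} is closed using the crucial observation---hinging on $K''(0)=0$---that its nonlinear forcing carries exponents $\gamma_j>1$ on $E_1$ (your $\alpha_i>1$), followed by a Datko--Pazy-type argument for decay. The paper organizes the continuity step slightly differently, with two nested times $T^{*}$ (for the $X$-bound) and $T^{**}\le T^{*}$ (for a positivity condition on an auxiliary function $k(E_1,X)$) and four successive smallness thresholds $\delta_1,\dots,\delta_4$, whereas you run a single barrier time with both constraints imposed simultaneously and a two-stage calibration $\delta\ll\bar\delta$; your packaging is arguably more transparent, but the mathematical content is the same.
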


\begin{proof}
After some minimal amendments, Equation \eqref{final_X_estimates} rewrites as
\begin{align}	
X(T) + C_1 \int_0^T X(t)\  \mathrm{d}t &\leq 
C_2\Big( \big(1 + E_1^{(s-1)/4}(0) \big) X^{(s+3)/4}(0) 	\nonumber \\ 
&+  
\sum_{i = 1}^{N} \int_0^T E_1^{\alpha_i}(T)  X^{\beta_i}(T)
+ 
\sum_{i = 1}^{N} \int_0^T E_1^{\alpha_i}(t)  X^{\beta_i}(t)\  \mathrm{d}t \Big).     \label{final_X_recall}
\end{align}

We first proceed with our energy estimate inequality. 
Let $C_1, C_2, C_3$ and $\alpha_1, \dots, \alpha_N$, $\beta_1, \dots, \beta_N$, $N$ be the constants from \eqref{final_X_recall}. Introduce the (smooth) functions
\begin{align}
f(x,y) &= y-C_2\sum_{i = 1}^{N} x^{\alpha_i} y^{\beta_i} = y\Big( 1 - C_2\sum_{i = 1}^{N} x^{\alpha_i} y^{\beta_i - 1}  \Big)		\label{function_f}		\\
g(x,y) &= C_1 y-C_2\sum_{i = 1}^{N} x^{\alpha_i} y^{\beta_i} = C_1y \Big( 1-\dfrac{C_2}{C_1} \sum_{i = 1}^{N} x^{\alpha_i} y^{\beta_i - 1}\Big).			\label{function_g}		
\end{align}
Recall $\beta_i - 1 > 0$. Now, we can express the function $f$ from Equation \eqref{final_X_recall} as
\begin{equation} 		\label{X_estimate_short_functions}
f\big(E_1(T),X(T)\big) + \int_0^T g\big(E_1(t),X(t)\big)\  \mathrm{d}t \leq C_2\big(1 + E_1^{(s-1)/4}(0) \big) X^{(s+3)/4}(0).
\end{equation}
We will apply a modified `barrier method' to show the desired result. 
Before doing so, we need an additional estimate on $E_1(t)$.

\emph{Step 1:} Refer to the $E_1$-inequality \eqref{E1_estimate}. We apply usual embedding and interpolation techniques to these `lower-order' terms. In particular,
\begin{equation}        \label{AF_z_t_E_1}
\begin{aligned}
\inner{AF, z_t} 
& \leq \|z\|_{\infty}^{s-1} \ltwo{Az} \ltwo{z_t} + \|z\|_{\infty}^{s-2} \ltwo{\nabla z}^2 		\\
& \leq CE_1^{(s+1)/4} \zHtwo^{(s+1)/2} + CE_1^{(s+2)/4} \zHtwo^{(s-2)/2} 				\\
& \leq CE_1^{(s+1)/4} \|z\|_{H_1}^{(s+1)/8} \|z\|_{H_3}^{3(s+1)/8} + 	CE_1^{(s+2)/4} \zHtwo^{(s-2)/2}		\\
& = CE_1^{3(s+1)/8}  \|z\|_{H_3}^{3(s+1)/8} + 	CE_1^{(s+2)/4} \zHtwo^{(s-2)/2}
\end{aligned}
\end{equation}
with $\frac{3(s+1)}{8}>1$ and $\frac{s+2}{4}>1$. The other two nonlinear terms can be estimated in exactly the same way.

Hence,
\begin{equation}		\label{final_E_1}
E_1(T) + C_3 \int_0^T E_1(t)\  \mathrm{d}t \leq 
C_4\Big(E_1(0) + \sum_{j = 1}^{N} \int_0^T E_1^{\gamma_j} (t)  X^{\tau_j} (t)\  \mathrm{d}t \Big),
\end{equation}
where $\gamma_j > 1$ and $\tau_j > 0$ for $j = 1, \dots, N$. Letting
\[
k(x,y) = C_3 x - C_4 \sum_{j=1}^{N} x^{\gamma_j} y^{\tau_j} = x \Big(C_3  - C_4 \sum_{j=1}^{N} x^{\gamma_j - 1} y^{\tau_j} \Big),
\]
Equation \eqref{final_E_1} can be expressed as
\begin{equation}		\label{final_E_1_short}
E_1 (T) + \int_0^T k\big(E_1(t), X(t)\big) \  \mathrm{d}t \leq C_4E_1(0).
\end{equation}

\emph{Step 2: Global existence.} Let $\big(z(0),\theta(0),p(0)\big)$ be the initial data triple and let $T_{\max}$ denote the maximal existence time of the local (smooth-in-time) solution. Our thrust is to show that, given a some small bound on $E_1(0)$, we have $T_{\max} = \infty$.

\emph{Step 2.1:}
Arguing by contradiction, suppose $T_{\max} < \infty$, and $X(T_{\max}) = \infty$. Letting $M_0 = (2C_2 + 1)M^{(s+3)/4}$, define
\begin{equation}			\label{T_star}
T^*=\sup\big\{t \in (0,T_{\max}] \,|\, X(s) \leq 2M_0 \mbox{ for any } s \in [0,t]\big\} \leq T_{\max}<\infty
\end{equation}
and
\begin{equation}			\label{T_dual_star}
T^{**}=\sup\Big\{t \in (0,T^*] \,\big|\,  k\big(E_1(s), X(s)\big) - \tfrac{C_3}{2} E_1(s) \geq 0 \mbox{ for all } s \in [0,t]\Big\} \leq T^*.
\end{equation}
Equation \eqref{T_star} suggests that 
\begin{equation} 		\label{X_T_star}
X(T^*)=2M_0,
\end{equation}
otherwise, we can extend $T^*$ due to the continuity of $X(t)$ in time.

We also observe that $T^{**} > 0$ if $E_1(0)$ is small enough. Indeed, since the function $k(E_1(s),X(s)) - \frac{C_3}{2}E_1(s)$ vanishes for $E_1(s) = 0$, it suffices to prove that it is increasing with respect to $E_1(s)$. A quick calculation shows that for $|y| \leq 2M_0$, 
\[
\frac{\partial}{\partial x} \left(k(x,y) - \frac{C_3}{2} x\right)  \geq \dfrac{C_2}{2} - C_4 \sum_{j=1}^{N} \gamma_j \ x^{\gamma_j - 1} (2M_0)^{\tau_j}.
\]
Thanks to the fact that $\gamma_j - 1 > 0$, the right hand side function has a unique positive root. Let $\delta_1$ be this root, namely, 
\begin{equation} 		\label{smallness_1}
\delta_1>0 \mbox{ is the number such that }\ \  \dfrac{C_2}{2} - C_4 \sum_{j=1}^{N} \gamma_j \ \delta_1^{\gamma_j - 1} (2M_0)^{\tau_j} = 0.
\end{equation}

Hence, when $0<E_1(0) < \delta_1$, $k(E_1(0),X(0)) - \frac{C_3}{2}E_1(0)$ is strictly positive, making $T^{**}$ strictly positive, as well.

\emph{Step 2.2:}
We claim that $T^{**} = T^*$. Again, arguing by contradiction, we would otherwise have $k\big(E_1(T^{**}), X(T^{**}\big) - \frac{C_3}{2} E_1(T^{**}) = 0$ due to the temporal continuity of the latter function. Thus,
\[
C_3  - C_4 \sum_{j=1}^{N} E_1^{\gamma_j - 1} (T^{**}) X^{\tau_j}  (T^{**}) = \tfrac{C_3}{2}
\qquad \mbox{and} \qquad
k\big(E_1(t), X(t)\big) \geq \tfrac{C_3}{2} E_1(t) \mbox{ for any } t \in [0,T^{**}].
\]
Solving the first equation for $E_1(T^{**})$, we get a unique solution (depending on $X(T^{**})$) subsequently denoted as $a \in \BR$. 

Using the second inequality and plugging it into \eqref{final_E_1_short}, we obtain
\begin{equation}
E_1 (T^{**}) + \dfrac{C_3}{2} \int_0^{T^{**}} E_1(t)\  \mathrm{d}t \leq C_4E_1(0).
\end{equation}
Imposing the condition
\begin{equation}			\label{smallness_2}
E_1(0) < \delta_2 = \dfrac{a}{2 C_4},
\end{equation} 
we would arrive at the contradiction: $E_1 (T^{**}) < \frac{a}{2} \neq a$.

\emph{Step 2.3:}
Thus, $T^{**} = T^*$ and $E_1(t) \leq C_4 E_1(0)$ for any $t \in [0,T^*]$. Now, assuming
\begin{equation}        \label{smallness_2.5}
    E_1(0)<\delta_3 = \frac{\rho}{C_4}
\end{equation}
with $\rho$ defined in \eqref{EQUATION_LOCAL_POSITIVITY_OF_K_PRIME},
we have $K'(z) > 0$ for any $t\in [0,T^*]$ meaning the positive ellipticity holds true.

Next, we proceed to Equation \eqref{X_estimate_short_functions}. Recalling the definitions in Equations \eqref{function_f} and \eqref{function_g} as well as the bound $X(t) \leq 2M_0$ in $[0,T^*]$, we let $\delta_4$ be a small number such that
\begin{equation} 		\label{smallness_3}
    1 - C_2\sum_{i = 1}^{N} \delta_4^{\alpha_i} (2M_0)^{\beta_i - 1} \geq \dfrac{1}{2}
    \qquad \mbox{and} \qquad
    1-\dfrac{C_2}{C_1} \sum_{i = 1}^{N} \delta_4^{\alpha_i} (2M_0)^{\beta_i - 1} \geq \dfrac{1}{2}.
\end{equation}
Hence, $f\big(E_1(T^*),X(T^*)\big) \geq \frac{1}{2}X(T^*)$ and $g\big(E_1(t),X(t)\big) \geq \frac{1}{2} X(t)$ for any $t \in [0,T^*]$. Together with Equation \eqref{X_estimate_short_functions}, we get
\begin{equation} 		\label{Global_E_63}
X(T^*) + \int_0^{T^*} X(t)\  \mathrm{d}t \leq 2C_2\big(1 + E_1^{(s-1)/4}(0) \big) X^{(s+3)/4}(0).
\end{equation}
Since $X(t) \leq 2M_0$, it follows that
\begin{equation} 		\label{smallness_4}
2C_2\big(1 + E_1^{(s-1)/4}(0) \big) X^{(s+3)/4}(0) \leq 2C_2 \cdot 2X(0) <2M_0 \qquad
\mbox{if } E_1(0) < 1.
\end{equation}
In summary, selecting
\begin{equation}			\label{delta_choice}
\delta = \min\{\delta_1, \delta_2, \delta_3, 1\}
\end{equation} 
with $\delta_i$, $i=1, \dots, 4$ defined in Equations \eqref{smallness_1}, \eqref{smallness_2}, \eqref{smallness_2.5}, \eqref{smallness_3}, respectively, we get $X(T^*) < 2M_0$ contradicting Equation \eqref{X_T_star}.
Hence, we have $T_{\max} = \infty$ and $2M_0$ as a global bound for $X(t)$.

\emph{Step 3: Exponential stability.}
Equation \eqref{Global_E_63} now becomes
\[
X(T) + \int_0^{T} X(t)\  \mathrm{d}t \leq 4C_2X^{(s+3)/4}(0)
\]
for any $T > 0$. A standard Datko \& Pazy-type propagation argument (cf. \cite[p. 211]{LaPoWa2017}) furnishes the exponential decay of $X(T)$.
\end{proof}

\begin{remark}
\label{remark_on_s}
We conclude this section by pointing out that, in order a smallness condition on the lower energy $E_1(0)$ (in lieu of the smallness of the full energy $X(0)$) to be sufficient, Assumption \ref{ASSUMPTION_GLOBAL_EXISTENCE} is critical. To see this, consider, for instance, the lower-order nonlinearity $K(z)=z-z^2 +\alpha z^3$. Thus, $F('z)=2z - 3\alpha z^2$ and $F''(0)=2$. With this example, the superlinear inequality for $X(t)$ (cf. Equation \eqref{final_X_recall}) still holds, but the other one for $E_1(t)$ (cf. Equation \eqref{final_E_1}) fails. Indeed, the inner product $\inner{AF,z_t}$ in \eqref{AF_z_t_E_1} contains $\inner{F''(z) |\nabla z|^2, z_t}$, which can (optimally) by estimated as
\[
\inner{F''(z) |\nabla z|^2, z_t} \leq C\ltwo{|\nabla z|^2}\ltwo{z_t} = C\|\nabla z\|_{4}^2\ltwo{z_t} \leq C \|z\|_{H^1}^{1/2} \|z\|_{H^2}^{3/2} \|z_t\|_{L^2} \leq C E_1^{3/4} X^{3/4}.
\]
with a constant bound from $|F''(z)|$. Therefore, one cannot obtain a superlinear bound for $E_1$. Nevertheless, if seeking  for a weaker result with a smallness condition on $X(0)$ instead (for instance, as in \cite{LaPoWa2017} or \cite{RaUe2017}), the nonlinearity $K(z)=z-z^2$ would be admissible. This should not be surprising as our Assumption \ref{ASSUMPTION_GLOBAL_EXISTENCE} is weakened, accordingly.
\end{remark}

\begin{appendix}

\section{Model Description (\texorpdfstring{$d = 2$}{d=2})} 
\label{SECTION_MODEL_DESCRIPTION}
    In this appendix section, we derive a macroscopic model for a prismatic thermoelastic plate of uniform thickness $h > 0$ and constant material density $\rho > 0$.
    As a reference configuration, we choose the domain $\mathcal{B}_{h} := \Omega \times (-\tfrac{h}{2}, \tfrac{h}{2})$ of $\mathbb{R}^{3}$,
    where the bounded domain $\Omega \subset \mathbb{R}^{2}$ is referred to as the mid-plane of the plate.
    The governing equation for the elastic part can be adopted {\it verbatim} from \cite[Section 2]{LaPoWa2017},
    while the thermal equations will closely resemble \cite[Chapter 1.5.2]{Po2011}.
    The material comprising the plate is assumed incompressible and elastically/thermally homogeneous and isotropic.
    While geometric and thermal nonlinearities are going to be discarded in what follows,
    our model will incorporate a nonlinearity in the hypoelastic material law
    allowing us to adequately describe such genuinely nonlinear elastic materials as rubber, liquid crystal elastomers, biological tissues, etc.
    The model will be obtained as a sort of Taylor's expansion of the 3D equations of thermoelasticity as $h \to 0$ (cf. \cite[Chapter 1]{LaLi1988}).

    \subsection{Thermoelastic Plate as a 3D Prismatic Body}
    We begin with formulating the system of nonlinear 3D thermoelasticity.
    To this end, in the Lagrangian coordinates, let $\mathbf{U} = (U_{1}, U_{2}, U_{3})^{T}$ denote the displacement vector, $T$ stand for the absolute temperature
    and $\mathbf{Q} = (Q_{1}, Q_{2}, Q_{3})^{T}$ be the associated heat flux.
    Denote by $T_{0} > 0$ a constant reference temperature rendering the body free of elastic and/or thermal stresses.
    Further, let $S$ be the entropy and 
    \begin{equation}
    	\boldsymbol{\sigma} = (\sigma_{ij})_{1 \leq i, j \leq 3}
    	\mbox{\quad and \quad}
    	\boldsymbol{\varepsilon}^{\mathrm{elast}} = \tfrac{1}{2}\big(\nabla \mathbf{U} + (\nabla \mathbf{U})^{T}\big) \notag
    \end{equation}
    stand for the first Piola \& Kirchhoff stress tensor and the infinitesimal Cauchy strain tensor, respectively.
    The total stress tensor is assumed to decompose into elastic and thermal stresses via
    \begin{equation}
    	\boldsymbol{\sigma} = \boldsymbol{\sigma}^{\mathrm{elast}} - \boldsymbol{\sigma}^{\mathrm{therm}}.
    	\label{EQUATION_LINEAR_STRESS_DECOMPOSITION}
    \end{equation}
    In absence of external body forces and heat sources, according to \cite[p. 142]{AmBeMi1984} and \cite[Chapter 1]{LaLi1988},
    the momentum and energy balance equations are expressed as
    \begin{subequations}
    \begin{align}
    	\rho \mathbf{U_{tt}} + \operatorname{div} \boldsymbol{\sigma} &= \mathbf{0} \quad \text{ in } (0, \infty) \times \mathcal{B}_{h}, 
    	\label{EQUATION_BALANCE_OF_MOMENTUM_3D_BASIC} \\
    	T S_{t} + \operatorname{div} \mathbf{Q} &= 0 \quad  \text{ in } (0, \infty) \times \mathcal{B}_{h}.
    	\label{EQUATION_BALANACE_OF_THERMAL_ENERGY_3D_BASIC}
    \end{align}
    \end{subequations}
    
    With conservation/continuity Equations (\ref{EQUATION_BALANCE_OF_MOMENTUM_3D_BASIC})--(\ref{EQUATION_BALANACE_OF_THERMAL_ENERGY_3D_BASIC}) at hand, we proceed to constitutive relations
    relating the stress tensor to the strain tensor/displacement gradient, the entropy to the temperature and the heat flux to the temperature gradient.
    As previously mentioned, following \cite[p. 142]{AmBeMi1984}, we assume the material is incompressible and isotropic 
    and the elastic stress and strain directional tensors (viz. \cite[Chapter 1, \S 6]{Il2004}) coincide.
    Next, we postulate a hypoelastic relation between the stress tensor $\boldsymbol{\sigma}^{\mathrm{elast}}$ and the strain tensor $\boldsymbol{\varepsilon}^{\mathrm{elast}}$.
    Due to material isotropy and incompressibility, following \cite[p. 42]{Il2004}, such hypoelastic relation can be described as
    \begin{equation}
    	\sigma_{\mathrm{int}}^{\mathrm{elast}} = \kappa(\varepsilon_{\mathrm{int}}^{\mathrm{elast}})
    	\label{EQUATION_HYPOELASTIC_LAW_ELASTIC_PART}
    \end{equation}
    where the elastic strain and stress intensities
    \begin{align*}
        \varepsilon_{\mathrm{int}}^{\mathrm{elast}} &= \tfrac{\sqrt{2}}{3} \Big((\operatorname{tr} \boldsymbol{\varepsilon}^{\mathrm{elast}})^{2}
        - \operatorname{tr}\big((\boldsymbol{\varepsilon}^{\mathrm{elast}})^{2}\big)\Big), &
        \sigma_{\mathrm{int}}^{\mathrm{elast}} &= \tfrac{\sqrt{2}}{3} \Big((\operatorname{tr} \boldsymbol{\sigma}^{\mathrm{elast}})^{2} 
    	- \operatorname{tr}\big((\boldsymbol{\sigma}^{\mathrm{elast}})^{2}\big)\Big)
    \end{align*}
    are the second invariants (i.e., properly scaled `second' eigenvalues) of $\boldsymbol{\varepsilon}^{\mathrm{elast}}$ and $\boldsymbol{\sigma}^{\mathrm{elast}}$, respectively.
    
    \begin{remark}
        The elastic stress-strain response $\kappa(\cdot)$ must naturally satisfy $\kappa(0) = 0$ and is typically measured experimentally, 
        for example, through a tensile test experiment.
        Being usually estimated through a statistical regression procedure, $\kappa(\cdot)$ can be reliably estimated only over a compact range of arguments.
        For this and many other reasons (such as possible material failure, etc.), the behavior of $\kappa(\cdot)$ at infinity should be viewed as a mathematical idealization.
        Nonetheless, assuming $\kappa(\cdot)$ is defined globally, in addition to satisfying $\lim_{|s| \to \infty} \big|\kappa(s)\big| = \infty$,
        the function needs to be globally positive to give rise to a signed elastic energy function $W = W(\nabla \mathbf{U})$.
        This is a reasonable assumption -- both physically and mathematically -- widely adopted in the Theory of Finite Elasticity (cf. \cite[Chapter 1]{Og2001}).
        When the global positivity of $\kappa(\cdot)$ is violated (see, e.g., a class of hypoelastic laws proposed in \cite[Equation (6)]{AmBeMi1984}),
        mathematical difficulties are artificially created putting unnecessarily constraints on the magnitude of the displacement gradient.
    \end{remark}
    
    For the thermal stresses and strains, following \cite[Chapter 1.6]{LaLi1988}, we adopt the linear isotropic homogeneous law
    \begin{equation}
    	\boldsymbol{\sigma}^{\mathrm{therm}} = 3 B \boldsymbol{\varepsilon}^{\mathrm{therm}},
    	\label{EQUATION_HYPOELASTIC_LAW_THERMAL_PART}
    \end{equation}
    where $B$ is the bulk modulus. Letting $E > 0$ and $\nu \in \big(-1, \tfrac{1}{2}\big)$ denote the Young's modulus and Poisson's ratio,
    since the material is assumed incompressible, we would formally obtain $\nu = \tfrac{1}{2}$ rendering the bulk modulus $B = \tfrac{E}{3(1 - 2\nu)}$ infinite.
    In fact, as recently demonstrated in \cite{MoDoRo2008}, this singularity does not occur experimentally as $B$ remains bounded (and even decreases!) as  $\nu \nearrow \tfrac{1}{2}$.
    Similarly, instead of hitting $0$ at $\nu = \tfrac{1}{2}-$, the shear modulus $G$ remains positive -- even though several magnitudes smaller than $B$.
    It has further been shown that $\frac{\partial B}{\partial \nu}(\tfrac{1}{2}-) = 0$.
    Hence, without violating the incompressibility condition, we can assume $\nu < \tfrac{1}{2}$.
    Linearizing $\kappa(\cdot)$ around $0$ (cf. \cite[Equation 13]{AmBeMi1984}) and using the classic definition of structural rigidity
    \begin{equation*}
        \frac{\kappa'(0) h^{3}}{9} = \frac{E h^{3}}{12 (1 - \nu^{2})},
    \end{equation*}
    we obtain $E = \frac{4 (1 - \nu^{2})}{3} \kappa(0)$, which leaves the Poisson's ratio $\nu$ a free parameter.
    
    Further, with $\tau = T - T_{0}$ representing the relative temperature, our thermal linearity and isotropy assumptions suggest
    \begin{equation}
    	\boldsymbol{\varepsilon}^{\mathrm{therm}} = \alpha \tau \mathbf{I}_{3 \times 3},
    	\label{EQUATION_THERMAL_STRAIN_MATERIAL_LAW}
    \end{equation}
    where $\alpha > 0$ denotes the thermal expansion coefficient (cf. \cite[p. 29]{LaLi1988}),
    while a linear approximation of the entropy relations \cite[Chapter 1]{No1986} around $T = T_{0}$ reads as
    \begin{equation}
    	S = \gamma \operatorname{tr}\big(\varepsilon^{\mathrm{elast}}\big) + \tfrac{\rho c}{T_{0}} \tau,
    	\label{EQUATION_ENTROPY_LINEARIZED}
    \end{equation}
    where $c > 0$ is the heat capacity and $\gamma = 3 B \alpha$.
    
    Finally, invoking the Cattaneo's law of relativistic heat conduction, we obtain
    \begin{equation}
        \label{EQUATION_CATTANEO_LAW}
        \tau_{0} \mathbf{Q}_{t} + \mathbf{Q} - \lambda_{0} \nabla \tau = \mathbf{0},
    \end{equation}
    where $\tau_{0} > 0$ is a relaxation time (not to be confused with the temperature $\tau$)
    and $\lambda_{0} > 0$ is the heat conductivity number.
    Compared to the classic Fourier's law (i.e., $\tau_{0} = 0$), the Cattaneo's law has a hyperbolic nature
    giving rise to the so-called `second sound' effect and having a finite signal propagation speed.
    While often being quantitatively indistinguishable from the Fourier's law,
    the Cattaneo's law becomes critical at small time-space scales and/or large heat pulse amplitudes
    as the latter is the case in laser cleaning (see, e.g., \cite{BaIv2014, QiXuGuo2013} and references therein) and numerous other applications \cite{Cha1998}, etc.
    Since the plate is thin in the $x_{3}$-direction, it is legitimate to approximate the equation for $Q_{3}$ in (\ref{EQUATION_CATTANEO_LAW}) via the Fourier's law and obtain
    \begin{equation}
        \label{EQUATION_CATTANEO_LAW_MODIFIED}
        \tau \mathbf{P} \mathbf{Q}_{t} + \mathbf{Q} - \lambda_{0} \nabla \tau = \mathbf{0} \quad \text{ with }
        \mathbf{P} := \begin{pmatrix} 1 & 0 & 0 \\ 0 & 1 & 0 \\ 0 & 0 & 0 \end{pmatrix} 
    \end{equation}
    or, equivalently,
    \begin{align*}
    	\tau Q_{i} + Q_{i} = -\lambda_{0} \partial_{x_{i}} \tau, \quad i = 1, 2, \qquad Q_{3} = -\lambda_{0} \partial_{x_{3}} \tau,
    \end{align*}
    while keeping the genuine Cattaneo's law for $Q_{1}$ and $Q_{2}$-components
    
    Combining Equations (\ref{EQUATION_LINEAR_STRESS_DECOMPOSITION}), 
    (\ref{EQUATION_HYPOELASTIC_LAW_THERMAL_PART})--(\ref{EQUATION_CATTANEO_LAW}) and plugging them 
    into Equations (\ref{EQUATION_BALANCE_OF_MOMENTUM_3D_BASIC})--(\ref{EQUATION_BALANACE_OF_THERMAL_ENERGY_3D_BASIC}), we arrive at
    \begin{subequations}
    \begin{align}
    \label{EQUATION_BALANCE_OF_MOMENTUM_3D_TRANSFORMED} 
    	\rho \mathbf{U}_{tt} + \operatorname{div} \boldsymbol{\sigma}^{\mathrm{elast}} + \gamma \nabla \tau
    	&= \mathbf{0} \quad \text{ in } (0, \infty) \times \mathcal{B}_{h}, \\
    	\label{EQUATION_BALANACE_OF_THERMAL_ENERGY_3D_TRANSFORMED}
    	\rho c \tau_{t} + \operatorname{div} \mathbf{Q}  + \gamma T_{0} \operatorname{div} \mathbf{U}_{t}
    	&= 0 \quad \text{ in } (0, \infty) \times \mathcal{B}_{h}, \\
    	\label{EQUATION_CATTANEO_LAW_3D_TRANSFORMED}
    	\tau_{0} \mathbf{P} \mathbf{Q}_{t} + \mathbf{Q} - \lambda_{0} \nabla \tau
    	&=  \mathbf{0} \quad \text{ in } (0, \infty) \times \mathcal{B}_{h}
    \end{align}
    \end{subequations}
    with $\boldsymbol{\sigma}^{\mathrm{elast}} = \boldsymbol{\sigma}^{\mathrm{elast}}(\nabla \mathbf{U})$ implicitly given 
    via Equation (\ref{EQUATION_HYPOELASTIC_LAW_ELASTIC_PART}) and the tensor alignment assumption.
    The equations of 3D dynamical thermoelasticity (\ref{EQUATION_BALANCE_OF_MOMENTUM_3D_TRANSFORMED})--(\ref{EQUATION_CATTANEO_LAW_3D_TRANSFORMED})
    are the starting point of our further plate modeling procedure.
    
    \subsection{The Averaging Procedure}
    
    Following \cite{LaLi1988} and neglecting the in-plane displacements, we adopt the Kirchhoff \& Love's structural assumption of undeformable normals:
    \begin{equation}
    	\begin{split}
    		U_{1}(x_{1}, x_{2}, x_{3}) &= -x_{3} w_{x_{1}}(x_{1}, x_{2}), \quad
    		U_{2}(x_{1}, x_{2}, x_{3})  = -x_{3} w_{x_{2}}(x_{1}, x_{2}), \\
    		U_{3}(x_{1}, x_{2}, x_{3}) &= \phantom{-x_{3}}w(x_{1}, x_{2}),
    	\end{split}
    	\label{EQUATION_KIRCHHOFF_LOVE_STRUCTURAL_ASSUMPTIONS}
    \end{equation}
    where $w$ is referred to as the bending component or the vertical displacement.
    Practically speaking, Equation (\ref{EQUATION_KIRCHHOFF_LOVE_STRUCTURAL_ASSUMPTIONS}) means 
    that the linear filaments, which were perpendicular to the mid-plane before deformation, are mandated to remain straight and perpendicular to the deformed mid-plane.
    Hence, the dynamics of the deflection vector $\mathbf{U}$ is reduced to that of the bending component $w$.
    
    Motivated by \cite[Chapter 1.6]{LaLi1988}, we introduce the thermal component
    \begin{equation}
    	\theta(x_{1}, x_{2}) = \frac{12 \alpha}{h^{3}} \int_{-h/2}^{h/2} x_{3} \tau \mathrm{d}x_{3} \notag
    \end{equation}
    as the $x_{3}$-moment of the thermal strain $\alpha \tau$,
    which, in turn, on the strength of Equation (\ref{EQUATION_THERMAL_STRAIN_MATERIAL_LAW}), is proportional to the $x_{3}$-moment of the temperature $\tau$.
    Here, the normalization factor is obtained as a reciprocal of $h^{3}/12 = \int_{-h/2}^{h/2} x_{3} \mathrm{d}x_{3}$.
    Similarly, following \cite[Chapter 1.5]{Po2011}, let
    \begin{equation}
    	\mathbf{q}(x_{1}, x_{2}) = \frac{12}{h^{3}} \int_{-h/2}^{h/2} x_{3} \begin{pmatrix} Q_{1} \\ Q_{2} \end{pmatrix} \mathrm{d}x_{3}.
    	\notag
    \end{equation}
    
    Proceeding as \cite[Section 2]{LaPoWa2017}, Equation (\ref{EQUATION_BALANCE_OF_MOMENTUM_3D_TRANSFORMED}) can be reduced to
    \begin{equation}
        \label{EQUATION_KIRCHHOFF_LOVE_MODELING_DERIVED_EQ_1}
    	\rho h w_{tt} - \tfrac{\rho h^{3}}{12} \triangle w_{tt} + \triangle K(\triangle w) 
    	+ D \tfrac{1 + \nu}{2} \triangle \theta = 0 \quad \text{ in } (0, \infty) \times \Omega.
    \end{equation}
    Here, $D = \frac{E h^{3}}{12(1 - \nu^{2})}$ is referred to as the flexural rigidity.
    The nonlinear response $K(\cdot)$ is obtained from $\kappa(\cdot)$ by means of
    \begin{equation}
    	\label{def_K_TS}
    	K(s) = \tfrac{h^{2}}{\sqrt{3}} \big[I \kappa\big]\big(\tfrac{h s}{\sqrt{3}}\big), \quad \text{ where }
    	\big[I f\big](s) = s^{-2} \int_{0}^{s} \xi f(\xi) \mathrm{d}\xi \text{ for } s \in \mathbb{R} \backslash \{0\}.
    \end{equation}
    In contrast to \cite{AmBeMi1984},
    the rotational inertia $\triangle w_{tt}$-term is not neglected here allowing for an adequate description of thicker plates
    than those accounted for by the standard theory.
    
    Multiplying Equation (\ref{EQUATION_BALANACE_OF_THERMAL_ENERGY_3D_TRANSFORMED}) with $\tfrac{12 x_{3}}{h^{3}}$ and integrating over $x_{3}$,
    we obtain
    \begin{equation*}
        \frac{12}{h^{3}} \sum_{i = 1}^{2} \partial_{x_{i}} \int_{-h/2}^{h/2} x_{3} \mathbf{q} \mathrm{d} x_{3}
        + \frac{12}{h^{3}} \int_{-h/2}^{h/2} x_{3} \frac{\partial Q_{3}}{\partial x_{3}} \mathrm{d} x_{3}
        + \frac{12 \rho c}{h^{3}} \partial_{t} \int_{-h/2}^{h/2} x_{3} \tau \mathrm{d} x_{3} - \gamma T_{0} \triangle w_{t}
        = 0
    \end{equation*}
    and, thus,
    \begin{equation}
        \label{EQUATION_THERMAL_EQUATION_AVERAGED}
        \operatorname{div} \mathbf{q} + \frac{12}{h^{3}} \int_{-h/2}^{h/2} x_{3} \frac{\partial Q_{3}}{\partial x_{3}} \mathrm{d} x_{3}
        + \frac{\rho c}{\alpha} \theta_{t} - \gamma T_{0} \triangle w_{t} = 0.
    \end{equation}
    Integrating by parts and using Equation (\ref{EQUATION_CATTANEO_LAW_MODIFIED}), we compute
    \begin{align}
        \notag
        \int_{-h/2}^{h/2} x_{3} \frac{\partial Q_{3}}{\partial x_{3}} \mathrm{d} x_{3} &=
        - \int_{-h/2}^{h/2} Q_{3} \mathrm{d} x_{3} + x_{3} Q_{3} \Big|_{-h/2}^{h/2} \\
        \label{EQUATION_X_3_Q_3_INTEGRATED_BY_PARTS}
        &= \lambda_{0} \int_{-h/2}^{h/2} \frac{\partial \tau}{\partial x_{3}} \mathrm{d} x_{3} + x_{3} Q_{3} \Big|_{-h/2}^{h/2}
        = \big(\lambda_{0} \tau + x_{3} Q_{3}\big) \Big|_{-h/2}^{h/2} \\
        \notag
        &= \lambda_{0} \big(\tau(t, x_{1}, x_{2}, \tfrac{h}{2}) - \tau(t, x_{1}, x_{2}, -\tfrac{h}{2})\big) +    
        \tfrac{h}{2} \big(Q_{3}(t, x_{1}, x_{2}, \tfrac{h}{2}) + Q_{3}(t, x_{1}, x_{2}, -\tfrac{h}{2})\big).
    \end{align}
    
    Following \cite[p. 30]{LaLi1988}, we assume the Newton's cooling law is applied to plate's upper and lower faces:
    \begin{equation}
        \label{EQUATION_Q_3_ON_UPPER_LOWER_FACES}
        Q_{3}(t, x_{1}, x_{2}, \tfrac{h}{2}) = \lambda_{1} \tau(t, x_{1}, x_{2}, \tfrac{h}{2}), \qquad
        Q_{3}(t, x_{1}, x_{2}, -\tfrac{h}{2}) = -\lambda_{1} \tau(t, x_{1}, x_{2}, -\tfrac{h}{2})
    \end{equation}
    for some $\lambda_{1} > 0$.
    Using the Taylor's expansion
    \begin{equation*}
        \tau(t, x_{1}, x_{2}, x_{3}) = \tau_{0}(t, x_{1}, x_{2}) + x_{3} \tau_{1}(t, x_{1}, x_{2})
    \end{equation*}
    and observing $\theta = \alpha \tau_{1}$, we can write
    \begin{equation}
        \label{EQUATION_TAU_FROM_LOWER_TO_UPPER}
        \tau(t, x_{1}, x_{2}, x_{3})\big|_{-h/2}^{h/2} = h \tau_{1}(t, x_{1}, x_{2}) = \tfrac{h}{\alpha} \theta(t, x_{1}, x_{2}).
    \end{equation}
    The combination of Equations (\ref{EQUATION_THERMAL_EQUATION_AVERAGED})--(\ref{EQUATION_TAU_FROM_LOWER_TO_UPPER}) furnishes
    \begin{equation}
        \label{EQUATION_KIRCHHOFF_LOVE_MODELING_DERIVED_EQ_2}
        \tfrac{\rho c}{\alpha} \tau_{t} + \tfrac{12}{\alpha h^{2}} \big(\lambda_{0} + \tfrac{h\lambda_{1}}{2}\big) \theta + \operatorname{div} \mathbf{q} - \gamma T_{0} \triangle w_{t} = 0.
    \end{equation}
    
    Multiplying the equations for $Q_{1}, Q_{2}$ in (\ref{EQUATION_CATTANEO_LAW_MODIFIED}) with $\tfrac{12 x_{3}}{h^{3}}$ and integrating over $x_{3}$, we get
    \begin{equation}
        \label{EQUATION_KIRCHHOFF_LOVE_MODELING_DERIVED_EQ_3}
        \tau_{0} \mathbf{q}_{t} + \mathbf{q} - \tfrac{\lambda_{0}}{\alpha} \nabla \theta = \mathbf{0}.
    \end{equation}
    
    Combining Equations (\ref{EQUATION_KIRCHHOFF_LOVE_MODELING_DERIVED_EQ_1}), (\ref{EQUATION_KIRCHHOFF_LOVE_MODELING_DERIVED_EQ_2}),
    (\ref{EQUATION_KIRCHHOFF_LOVE_MODELING_DERIVED_EQ_3}), we arrive at
    \begin{subequations}
    \begin{align}
        \label{EQUATION_KIRCHHOFF_LOVE_MODELING_FINAL_EQ_1}
    	\rho h w_{tt} - \tfrac{\rho h^{3}}{12} \triangle w_{tt} + \triangle K(\triangle w) 
    	+ D \tfrac{1 + \mu}{2} \triangle \theta &= 0 \quad \text{ in } (0, \infty) \times \Omega, \\
    	\label{EQUATION_KIRCHHOFF_LOVE_MODELING_FINAL_EQ_2}
    	\tfrac{\rho c}{\alpha} \partial_{t} \theta + \operatorname{div} \mathbf{q} + \tfrac{12}{\alpha h^{2}} \big(\lambda_{0} + \tfrac{h\lambda_{1}}{2}\big) \theta + \gamma T_{0} \triangle w_{t} &= 0
    	\quad \text{ in } (0, \infty) \times \Omega, \\
    	\label{EQUATION_KIRCHHOFF_LOVE_MODELING_FINAL_EQ_3}
    	\tau_{0} \mathbf{q}_{t} + \mathbf{q} - \tfrac{\lambda_{0}}{\alpha} \nabla \theta &= \mathbf{0}
        \quad \text{ in } (0, \infty) \times \Omega.
    \end{align}
    \end{subequations}
    
    Various boundary conditions can be adopted for Equations (\ref{EQUATION_KIRCHHOFF_LOVE_MODELING_FINAL_EQ_1})--(\ref{EQUATION_KIRCHHOFF_LOVE_MODELING_FINAL_EQ_3})
    (cf. \cite[Chapter 2]{Am1970}, \cite[Chapter 4]{Il2004}, \cite[Chapter 1]{LaLi1988}, \cite{LaTri1998.1, LaTri1998.2, LaTri1998.3, LaTri1998.4}).
    In this paper, we consider a simply supported plate held at the reference temperature on the boundary $\partial \Omega$:
    \begin{equation}
    	w = \triangle w = \theta = 0 \text{ in } (0, \infty) \times \partial \Omega. \notag
    \end{equation}
    For the sake of convenience, outside of this Section, the constants in Equation
    (\ref{EQUATION_KIRCHHOFF_LOVE_MODELING_FINAL_EQ_1})--(\ref{EQUATION_KIRCHHOFF_LOVE_MODELING_FINAL_EQ_3})
    will be renamed and/or normalized to obtain the mathematically more convenient system
    (\ref{EQUATION_QUASILINEAR_PDE_IN_W_THETA_AND_Q_1})--(\ref{EQUATION_QUASILINEAR_PDE_IN_W_THETA_AND_Q_3}).

	\section{Well-Posedness for Linearized Equations} 
	\label{APPENDIX_SECTION_WELL_POSEDNESS}
	
	The following well-posedness result is based on an extension of Kato's \cite{Ka1985} solution theory for abstract time-dependent evolution equations
	developed by Jiang \& Racke \cite[Appendix A]{JiaRa2000}.
	The arguments presented below are an adaptation of \cite[Appendix A.1]{LaPoWa2017}.
	In contrast to \cite{LaPoWa2017}, Equations (\ref{EQUATION_LINEARIZED_SYSTEM_PDE_1})--(\ref{EQUATION_LINEARIZED_SYSTEM_IC_2})
	comprise a hyperbolic system so that Equation (\ref{EQUATION_LINEARIZED_SYSTEM_PDE_1}) for $z$
	does not decouple from Equations (\ref{EQUATION_LINEARIZED_SYSTEM_PDE_2})--(\ref{EQUATION_LINEARIZED_SYSTEM_PDE_3}) for $\theta, p$.
	This makes the analysis more complicated as all components need to be treated simultaneously.
	This can be explained by the fact that Equations (\ref{EQUATION_LINEARIZED_SYSTEM_PDE_1})--(\ref{EQUATION_LINEARIZED_SYSTEM_PDE_3})
	inherit hyperbolic natural from the original plate system (\ref{EQUATION_QUASILINEAR_PDE_IN_W_THETA_AND_Q_1})--(\ref{EQUATION_QUASILINEAR_PDE_IN_W_THETA_AND_Q_3}).
	
	Let $\Omega \subset \mathbb{R}^{d}$ be a bounded domain with a $C^{s}$-boundary $\partial \Omega$ for some $s \geq \lfloor \tfrac{d}{2}\rfloor + 2$.
	Further, let $T > 0$ be arbitrary, but fixed.
	Throughout this appendix as well as the proof of Theorem \ref{THEOREM_LOCAL_EXISTENCE}, as before, $H^{0}_{0}(\Omega) \equiv H^{0}(\Omega) := L^{2}(\Omega)$
	and $\bar{D}^{n}$ is the time-space gradient defined in Equation (\ref{EQUATION_OPERATOR_D_N}).
	
	Further, let $\phi_{\delta} := \exp\big(\frac{1}{1 - (\cdot/\delta)^{2}}\big) \mathds{1}_{(-\delta, \delta)}$
	denote the one-dimensional Friedrichs' mollifier with a `window size' $\delta > 0$.
	For any $L^{1}$-function $w \colon [0, T] \times \Omega \to \mathbb{R}$, consider the $C^{\infty}(\Omega)$-approximation (cf. \cite[Chapters 8 and 9]{SchuKaHoKa2012}) of $w$:
	\begin{equation}
        \label{EQUATION_APPENDIX_CONVOLUTION}
		w_{\delta}(t, \cdot) = \int_{0}^{T} \phi_{\delta}(t - s) w(s, \cdot) \mathrm{d}s \quad \text{ for } \quad t \in [0, T] \quad \text{ in } \quad \Omega.
	\end{equation}
	The following result by Jiang \& Racke \cite[Lemma A.12]{JiaRa2000} will be used in the sequel.
	\begin{lemma}
		\label{LEMMA_MOLLIFIER_PROPERTIES}
		For arbitrary $a \in C^{1}\big([0, T], L^{\infty}(\Omega)\big)$ and $w \in C^{0}\big([0, T], L^{2}(\Omega)\big)$
		and any sufficiently small $\varepsilon > 0$, there holds
        \begin{equation*}
            \lim_{\delta \to 0} \int_{\varepsilon}^{T - \varepsilon} \big\|\partial_{t}\big((aw)_{\delta}(t, \cdot) - aw_{\delta}(t, \cdot)\big)\big\|_{L^{2}(\Omega)}^{2} \mathrm{d}t = 0.
		\end{equation*}
	\end{lemma}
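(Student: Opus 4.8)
The statement is a Friedrichs-type commutator lemma for mollification in time, and the plan is to reduce it to an explicit pointwise-in-$t$ identity for $\partial_t\big((aw)_\delta - a w_\delta\big)$ and then estimate the resulting kernel integral using the uniform continuity of $a_t$ and $w$ as Bochner-continuous functions. I fix $\varepsilon > 0$ and work only with $\delta < \varepsilon$, so that for $t \in [\varepsilon, T-\varepsilon]$ the support $(-\delta,\delta)$ of $\phi_\delta'$ sits inside $(t-T,t)$ and the convolution defining $w_\delta$ (and its $t$-derivative) is an honest mollification with no contributions from the endpoints of $[0,T]$. Since $\phi_\delta$ is smooth and compactly supported while $w, aw \in C^0\big([0,T],L^2(\Omega)\big)$ and $a \in C^1\big([0,T],L^\infty(\Omega)\big)$, both $(aw)_\delta$ and $a w_\delta$ lie in $C^1\big([0,T],L^2(\Omega)\big)$, and differentiating under the integral together with the product rule gives, for $t \in [\varepsilon,T-\varepsilon]$,
\begin{equation*}
\partial_t\big((aw)_\delta - a w_\delta\big)(t,\cdot) = \int_0^T \phi_\delta'(t-s)\big(a(s,\cdot) - a(t,\cdot)\big)\, w(s,\cdot)\,\mathrm{d}s \; - \; a_t(t,\cdot)\, w_\delta(t,\cdot) \;=:\; R_\delta(t,\cdot).
\end{equation*}
Here the $t$-derivative hitting the factor $a$ in $a w_\delta$ produces the extra term $-a_t\,w_\delta$, which does \emph{not} vanish and must be shown to cancel against part of the kernel integral.

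Next I substitute $r = t-s$ to write the kernel integral as $\int_{-\delta}^{\delta}\phi_\delta'(r)\big(a(t-r,\cdot) - a(t,\cdot)\big)\, w(t-r,\cdot)\,\mathrm{d}r$ and split $a(t-r) - a(t) = -r\,a_t(t) + e_\delta(t,r)$ with $e_\delta(t,r) = -\int_0^r\big(a_t(t-\rho) - a_t(t)\big)\,\mathrm{d}\rho$, so that $\|e_\delta(t,r)\|_{L^\infty(\Omega)} \le |r|\,\omega_a(\delta)$, where $\omega_a(\delta) := \sup\{\|a_t(t-\rho) - a_t(t)\|_{L^\infty(\Omega)} : t\in[\varepsilon,T-\varepsilon],\ |\rho| \le \delta\} \to 0$ as $\delta \to 0$ by uniform continuity of $a_t \colon [0,T] \to L^\infty(\Omega)$. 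The $e_\delta$-contribution is bounded in $L^2(\Omega)$ by $\omega_a(\delta)\int_{-\delta}^{\delta}|r\,\phi_\delta'(r)|\,\|w(t-r,\cdot)\|_{L^2(\Omega)}\,\mathrm{d}r \le C\,\omega_a(\delta)\sup_{[0,T]}\|w\|_{L^2(\Omega)}$, using the scale-invariant bound $\int_{-\delta}^{\delta}|r\,\phi_\delta'(r)|\,\mathrm{d}r \le C$, hence tends to $0$ in $L^2\big((\varepsilon,T-\varepsilon)\times\Omega\big)$. For the main term $-a_t(t)\int_{-\delta}^{\delta} r\,\phi_\delta'(r)\, w(t-r,\cdot)\,\mathrm{d}r$, integration by parts gives the exact identity $\int_{-\delta}^{\delta} r\,\phi_\delta'(r)\,\mathrm{d}r = -1$ (boundary term vanishes, $\phi_\delta$ has unit mass), so that $\int_{-\delta}^{\delta} r\,\phi_\delta'(r)\, w(t-r,\cdot)\,\mathrm{d}r + w(t,\cdot) = \int_{-\delta}^{\delta} r\,\phi_\delta'(r)\big(w(t-r,\cdot) - w(t,\cdot)\big)\,\mathrm{d}r$, whose $L^2(\Omega)$-norm is at most $C\,\omega_w(\delta)$ with $\omega_w$ the $L^2$-modulus of continuity of $w$; hence this term converges to $a_t(t,\cdot)\,w(t,\cdot)$ in $L^2\big((\varepsilon,T-\varepsilon)\times\Omega\big)$. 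Finally, since $w_\delta \to w$ in $C^0\big([\varepsilon,T-\varepsilon],L^2(\Omega)\big)$ and $\sup_{[0,T]}\|a_t\|_{L^\infty(\Omega)} < \infty$, the term $a_t(t,\cdot)\,w_\delta(t,\cdot)$ also converges to $a_t(t,\cdot)\,w(t,\cdot)$ in the same space. Adding up, $R_\delta \to a_t w - a_t w = 0$ in $L^2\big((\varepsilon,T-\varepsilon)\times\Omega\big)$, which is the assertion.

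The computation is otherwise routine; the only delicate point — and the step I expect to require the most care — is the bookkeeping around the $-a_t w_\delta$ correction: one must check that the linear-in-$r$ part of the Taylor expansion of $a(s)-a(t)$, once integrated against the odd kernel $\phi_\delta'$, reproduces exactly $+a_t w$ (through $\int r\,\phi_\delta' = -1$), thereby cancelling the $-a_t w$ limit of $-a_t w_\delta$; getting the sign and the normalization of $\phi_\delta$ right is where the argument can go wrong. I also note that the restriction to $[\varepsilon,T-\varepsilon]$ enters precisely to discard boundary-of-$[0,T]$ effects in the mollification, and that the $C^1$ (rather than merely Lipschitz) regularity of $a$ is exactly what makes $\omega_a(\delta)\to 0$ and hence renders the $e_\delta$-term negligible.
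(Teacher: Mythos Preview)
Your argument is correct and self-contained. The paper itself does not prove this lemma: it merely states it and cites \cite[Lemma A.12]{JiaRa2000} (Jiang \& Racke). So there is no ``paper's own proof'' to compare against beyond that reference; your direct computation via the explicit commutator identity, Taylor expansion of $a(t-r)-a(t)$, and the scale-invariant kernel bound $\int |r\,\phi_\delta'(r)|\,\mathrm{d}r \le C$ is the standard route and matches what one finds in Jiang \& Racke's appendix.

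One minor remark on presentation: the paper's formula for $\phi_\delta$ omits the normalizing constant, but the phrase ``Friedrichs' mollifier'' makes the intended unit-mass normalization clear, so your use of $\int r\,\phi_\delta'(r)\,\mathrm{d}r = -1$ is justified. Even without normalization the cancellation still works, since both the limit of the main kernel term and the limit of $-a_t w_\delta$ pick up the same factor $\int \phi_\delta$. Your identification of the sign/normalization bookkeeping around the $-a_t w_\delta$ correction as the only delicate step is accurate.
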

	
	Consider the following linear system with time- and space-dependent coefficients:
	\begin{subequations}
	\begin{align}
		z_{tt}(t, x) - \bar{a}_{ij}(t, x) \partial_{x_{i}} \partial_{x_{j}} z(t, x) - \tfrac{\alpha}{\gamma} A \theta + B \theta &= \bar{f}(t, x) &&\text{for } (t, x) \in (0, T) \times \Omega,
		\label{EQUATION_LINEARIZED_SYSTEM_PDE_1} \\
		\beta \theta_{t}(t, x) + p(t, x) + \alpha z_{t}(t, x) &= 0 &&\text{for } (t, x) \in (0, T) \times \Omega,
		\label{EQUATION_LINEARIZED_SYSTEM_PDE_2} \\
		\tau p_{t}(t, x) + p(t, x) - \eta A \theta(t, x) &= 0 &&\text{for } (t, x) \in (0, T) \times \Omega,
		\label{EQUATION_LINEARIZED_SYSTEM_PDE_3} \\
		z(t, x) = 0, \quad \theta(t, x) &= 0 && \text{for } (t, x) \in [0, T] \times \partial \Omega,
		\label{EQUATION_LINEARIZED_SYSTEM_BC} \\
		z(0, x) = z^{0}(x), \quad z_{t}(0, x) &= z^{1}(x) &&\text{for } x \in \Omega,
		\label{EQUATION_LINEARIZED_SYSTEM_IC_1} \\
		\theta(0, x) = \theta^{0}(x), \quad p(0, x) &= p^{0}(x) &&\text{for } x \in \Omega.
		\label{EQUATION_LINEARIZED_SYSTEM_IC_2}
	\end{align}
	\end{subequations}
	Here, $B$ is a bounded linear operator on all $H^{s}(\Omega)$, $s \geq 0$, and $H^{s}(\Omega) \cap H^{1}_{0}(\Omega)$, $s \geq 1$, spaces
	and $A$ is the negative Dirichlet-Laplacian, which should not be confused with the generator $\mathcal{A}(t)$ 
	defined in Equation (\ref{EQUATION_OPERATOR_FAMILY_LINEAR_WAVE_EQUATION}) below.
	
	\begin{assumption}[{cf. \cite[Appendix]{LaPoWa2017}}]
		\label{ASSUMPTION_APPENDIX}
		Let $s \geq \lfloor\frac{d}{2}\rfloor + 2$ be a fixed integer
		and let $\gamma_{0}, \gamma_{1}$ be positive numbers.
		Assume the following conditions are satisfied.
		\begin{enumerate}
			\item \emph{Coefficient symmetry}:
			$\bar{a}_{ij}(t, x) = \bar{a}_{ji}(t, x)$ for $(t, x) \in [0, T] \times \bar{\Omega}$.
			
			\item \emph{Coefficient regularity}:
			$\bar{a}_{ij} \in C^{0}\big([0, T] \times \bar{\Omega}\big)$ and
			\begin{equation}
                \notag
				\partial_{x_{k}} \bar{a}_{ij} \in L^{\infty}\big(0, T; H^{s - 1}(\Omega)\big), \quad
				\partial_{t}^{m} \bar{a}_{ij} \in L^{\infty}\big(0, T; H^{s - 1 - m}(\Omega)\big)
				\quad \text{ for } m = 1, 2, \dots, s - 1.
			\end{equation}
			
			\item \emph{Coercivity}: For $z \in H^{1}_{0}(\Omega)$ and $t \in [0, T]$,
			$\|z\|_{H^{1}(\Omega)}^{2} \leq
            \gamma_{0} \big\langle \bar{a}_{ij}(t, \cdot) \partial_{x_{i}} z, \partial_{x_{j}} z\big\rangle_{L^{2}(\Omega)}$.
			
			\item \emph{Elliptic regularity}:
			For $m = 0, 1, \dots, s - 2$, $z(t, \cdot) \in H^{1}_{0}(\Omega)$ and
			$\bar{a}_{ij}(t, \cdot) \partial_{x_{i}} \partial_{x_{j}} z(t, \cdot) \in H^{m}(\Omega)$ for a.e. $t \in [0, T]$ implies
			$u(t, \cdot) \in H^{m + 2}(\Omega)$ and
			\begin{equation}
				\|z(t, \cdot)\|_{H^{m + 2}(\Omega)} \leq \gamma_{1}\Big(\|\bar{a}_{ij}(t, \cdot) \partial_{x_{i}} \partial_{x_{j}} z(t, \cdot)\|_{H^{m}(\Omega)} +
				\|z(t, \cdot)\|_{L^{2}(\Omega)}\Big) \text{ for a.e. } t \in [0, T]. \notag
			\end{equation}
			
			\item \emph{Right-hand side regularity}:
			For $m = 0, 1, \dots, s - 2$,
			\begin{equation}
				\partial_{t}^{m} \bar{f} \in C^{0}\big([0, T], H^{s - 2 - m}(\Omega)\big), \quad
				\partial_{t}^{s - 1} \bar{f} \in L^{2}(0, T; L^{2}(\Omega)\big).
				\notag
			\end{equation}
			
			\item \emph{Compatibility conditions}:
			For $k, l, m = 0, 1, \dots, s - 1$,
			\begin{align*}
				\bar{z}^{m} \in H^{s - m}(\Omega) \cap H^{1}_{0}(\Omega), \quad \bar{z}^{s} \in L^{2}(\Omega), \quad
				\bar{\theta}^{l} &\in H^{s - l}(\Omega) \cap H^{1}_{0}(\Omega), \quad \bar{p}^{k} \in H^{s - 1 - k}(\Omega)
			\end{align*}
			where $\bar{z}^{m}, \bar{\theta}^{l}, \bar{p}^{k}$ are recursively defined via
			\begin{align*}
				\bar{z}^{0}(x) &= z^{0}(x), \quad \bar{z}^{1}(x) = z^{1}(x), \quad
				\bar{\theta}^{0}(x) = \theta^{0}(x), \quad \bar{p}^{0}(x) = p^{0}(x), \\
				\begin{pmatrix}
					\bar{z}^{m} \\
					\bar{\theta}^{m - 1} \\
					\bar{p}^{m - 1}
				\end{pmatrix}(x) 
				&=
				\begin{pmatrix}
					  \Big(\sum\limits_{n = 0}^{m - 2} \binom{m - 2}{n}
					  \partial_{t}^{n} \bar{a}_{ij} \partial_{x_{i}} \partial_{x_{j}} \bar{z}^{m - 2 - n} +
					  \tfrac{\alpha}{\gamma} A\bar{\theta}^{m - 2} - B\bar{\theta}^{m - 2} + \partial_{t}^{m - 2} \bar{f}_{i} \Big)(0, x) \\
					  -\tfrac{1}{\beta} \bar{p}^{m - 2}(x) - \tfrac{\alpha}{\beta} \bar{z}^{m - 1}(x) \\
					  -\tfrac{1}{\tau} \bar{p}^{m - 2}(x) + \tfrac{\eta}{\tau} \big(A \bar{\theta}^{m - 2}\big)(x)
				\end{pmatrix}
			\end{align*}
			for $m \geq 2$ and $x \in \Omega$.
		\end{enumerate}
	\end{assumption}
	
	\noindent
	Note that our Assumption \ref{ASSUMPTION_APPENDIX}
	differs both from \cite[Assumption A.2.1]{JiaRa2000} and \cite[Assumption A.2]{LaPoWa2017}.
	
	\begin{theorem}
		\label{THEOREM_APPENDIX}
		
		Under Assumption \ref{ASSUMPTION_APPENDIX},
		the initial-boundary value problem (\ref{EQUATION_LINEARIZED_SYSTEM_PDE_1})-(\ref{EQUATION_LINEARIZED_SYSTEM_IC_2})
		possesses a unique classical solution $(z, \theta, p)$ such that
		\begin{align*}
			z &\in \bigcap_{m = 0}^{s - 1} C^{m}\big([0, T], H^{s - m}(\Omega) \cap H^{1}_{0}(\Omega)\big) \cap
			C^{s}\big([0, T], L^{2}(\Omega)\big), \\
			\theta &\in \bigcap_{m = 0}^{s - 1} C^{m}\big([0, T], H^{s - m}(\Omega) \cap H^{1}_{0}(\Omega)\big), \quad
			p \in \bigcap_{m = 0}^{s - 1} C^{m}\big([0, T], H^{s - 1 - m}(\Omega)\big).
		\end{align*}
		Moreover, letting
		\begin{equation}
			\begin{split}
				\phi_{0} &= \|\bar{a}_{ij}(0, \cdot)\|_{L^{\infty}(\Omega)} +
				\|\partial_{x_{k}} \bar{a}_{ij}(0, \cdot)\|_{H^{s - 1}(\Omega)}, \\
				\phi &= \max_{0 \leq t \leq T} \Big(\|\bar{a}_{ij}(t, \cdot)\|_{L^{\infty}(\Omega)} +
				\|\partial_{x_{k}} \bar{a}_{ij}(t, \cdot)\|_{H^{s - 1}(\Omega)} +
				\sum_{m = 1}^{s - 1} \|\partial_{t}^{m} \bar{a}_{ij}(t, \cdot)\|_{H^{s - 1 - m}(\Omega)}\Big),
			\end{split}
			\notag
		\end{equation}
		there exists a positive number $K_{1}$,
		which is a continuous function of $\phi_{0}$, $\gamma_{0}$ and $\gamma_{1}$,
		and a positive number $K_{2}$, which continuously depends on $\phi$, $\gamma_{0}$ and $\gamma_{1}$,
		such that
		\begin{align*}
			\max_{0 \leq t \leq T} \Big(\big\|\bar{D}^{s} z(t, \cdot)\big\|_{L^{2}(\Omega)}^{2} &+
			\big\|\bar{D}^{s - 1} \theta(t, \cdot)\big\|_{H^{1}(\Omega)}^{2} +
			\big\|\bar{D}^{s - 1} p(t, \cdot)\big\|_{L^{2}(\Omega)}^{2}\Big) \\
			&\leq
			K_{1} \Lambda_{0} \exp\big(K_{2} T^{1/2}(1 + T^{1/2} + T + T^{3/2})\big),
			\notag
		\end{align*}
		where
		\begin{align*}
			\Lambda_{0} &:=
			\sum_{m = 0}^{s} \|\bar{z}^{m}\|_{H^{s - m}(\Omega)}^{2} +
			\sum_{m = 0}^{s - 1} \|\bar{\theta}^{m}\|_{H^{s - m}(\Omega)}^{2} +
			\sum_{m = 0}^{s - 1} \|\bar{p}^{m}\|_{H^{s - 1 - m}(\Omega)}^{2} \\
			&+ (1 + T) \sup_{0 \leq t \leq T} \big\|\bar{D}^{s - 2} \bar{f}(t, \cdot)\big\|_{L^{2}(\Omega)}
			+ T^{1/2} \int_{0}^{T} \|\partial_{t}^{s - 1} \bar{f}(t, \cdot)\|_{L^{2}(\Omega)}^{2} \mathrm{d}t.
			\notag
		\end{align*}
	\end{theorem}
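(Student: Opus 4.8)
The plan is to follow the Kato--Jiang--Racke paradigm for non-autonomous hyperbolic evolution equations, as adapted in \cite[Appendix A.1]{LaPoWa2017} and \cite[Appendix A]{JiaRa2000}, but carried out simultaneously for the full coupled triple $(z,\theta,p)$ rather than for a scalar wave equation with prescribed source. First I would recast \eqref{EQUATION_LINEARIZED_SYSTEM_PDE_1}--\eqref{EQUATION_LINEARIZED_SYSTEM_IC_2} as a first-order Cauchy problem $\tfrac{\mathrm{d}}{\mathrm{d}t}U = \mathcal{A}(t)U + \mathcal{F}(t)$, $U(0)=U_0$, for $U=(z,z_t,\theta,p)^{T}$, where $\mathcal{A}(t)$ encodes the variable-coefficient operator $\bar a_{ij}(t,\cdot)\partial_{x_i}\partial_{x_j}$, the mixed-order coupling $\tfrac{\alpha}{\gamma}A\theta - B\theta$, the first-order thermal relations, and the Dirichlet conditions on $z$ and $\theta$. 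The base phase space is $X:=\big(H^1_0(\Omega)\big)\times L^2(\Omega)\times\big(H^1_0(\Omega)\big)\times L^2(\Omega)$, equipped for each fixed $t$ with the $\bar a_{ij}(t,\cdot)$-weighted inner product mimicking $E_1$ in \eqref{E1} (with $\bar a_{ij}$ replacing the Dirichlet Laplacian in the $z$-block and the weights on the $\theta$- and $p$-blocks chosen so that the cross terms coupling $z_t$ with $A\theta$ and $p$ with $A\theta$ cancel exactly, in the spirit of the multiplier bookkeeping of Lemma~\ref{global_lemma_1}); the only surviving negative term is $-c\,\|p\|_{L^2(\Omega)}^2$ for some $c>0$. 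By Assumption~\ref{ASSUMPTION_APPENDIX}.2--\ref{ASSUMPTION_APPENDIX}.3 all these $t$-dependent norms are mutually equivalent with constants uniform over $[0,T]$, and $\mathcal{A}(t)-\omega I$ is dissipative with respect to each of them. The common domain $Y:=\big(H^2(\Omega)\cap H^1_0(\Omega)\big)\times H^1_0(\Omega)\times\big(H^2(\Omega)\cap H^1_0(\Omega)\big)\times H^1(\Omega)$ is $t$-independent by the elliptic regularity Assumption~\ref{ASSUMPTION_APPENDIX}.4, and on it $\mathcal{A}(t)$ is quasi-$m$-dissipative (surjectivity of $\lambda-\mathcal{A}(t)$ for large $\lambda$ via Lax--Milgram), so $\{\mathcal{A}(t)\}_{t\in[0,T]}$ is a stable family in Kato's sense.

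Second, I would verify the remaining hypotheses of Kato's generation theorem: $Y$ is $\mathcal{A}(t)$-admissible (again via Assumptions~\ref{ASSUMPTION_APPENDIX}.2 and \ref{ASSUMPTION_APPENDIX}.4) and $t\mapsto\mathcal{A}(t)\in\mathcal{B}(Y,X)$ is norm-continuous because $\bar a_{ij}\in C^0([0,T]\times\bar\Omega)$. This yields a unique solution of the base-level problem with $U\in C([0,T],Y)\cap C^1([0,T],X)$ whenever $U_0\in Y$ and $\mathcal{F}$, which carries only $\bar f$, is regular as governed by Assumption~\ref{ASSUMPTION_APPENDIX}.5. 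To climb to the regularity asserted in the theorem I would differentiate \eqref{EQUATION_LINEARIZED_SYSTEM_PDE_1}--\eqref{EQUATION_LINEARIZED_SYSTEM_PDE_3} in time $m$ times for $m=1,\dots,s-1$; each differentiated system has the same principal part, with right-hand side $\partial_t^m\bar f$ augmented by the commutators $\sum_{n=1}^{m}\binom{m}{n}(\partial_t^n\bar a_{ij})\partial_{x_i}\partial_{x_j}\partial_t^{m-n}z$ and lower-order $\theta$-terms, all of which lie in the right spaces by Assumptions~\ref{ASSUMPTION_APPENDIX}.2 and \ref{ASSUMPTION_APPENDIX}.5 once the lower levels are known, while the compatibility conditions Assumption~\ref{ASSUMPTION_APPENDIX}.6 guarantee the recursively defined data $\bar z^m,\bar\theta^m,\bar p^m$ belong to the required Sobolev spaces. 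An induction on $m$, together with a final use of elliptic regularity applied to the algebraic relations \eqref{EQUATION_LINEARIZED_SYSTEM_PDE_2}--\eqref{EQUATION_LINEARIZED_SYSTEM_PDE_3} to trade time regularity of $\theta,p$ for space regularity, then produces the stated membership of $z$, $\theta$, $p$ in the intersection classes.

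Third, the quantitative estimate would be obtained by the energy/multiplier method run level by level: for $\ell=0,\dots,s$ one tests the $\ell$-times differentiated equations with the natural multipliers $\partial_t^{\ell+1}z$, $\tfrac{1}{\gamma}A\partial_t^{\ell}\theta$ and $\tfrac{1}{\gamma\eta}\partial_t^{\ell}p$, integrates by parts, absorbs the coefficient-derivative and commutator contributions into the dissipation $\|p\|_{L^2(\Omega)}^2$ and into the elliptic $\bar a$-form using Young's inequality and Assumption~\ref{ASSUMPTION_APPENDIX}.4, sums over $\ell$, and closes by Gr\"onwall's inequality. The split of the final constant into $K_1$, depending only on the initial-time coefficient quantity $\phi_0$ through the equivalence constant of the $\bar a_{ij}(0,\cdot)$-weighted norm entering $\Lambda_0$, and $K_2$, depending on the uniform-in-time quantity $\phi$ through the Gr\"onwall exponent, falls out of this computation; the polynomial factor $T^{1/2}(1+T^{1/2}+T+T^{3/2})$ comes from H\"older estimates in time applied to the $L^2(0,T;L^2(\Omega))$-forcing $\partial_t^{s-1}\bar f$ and to the time integrals of the coefficient derivatives. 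Uniqueness is just the $\ell=0$ energy identity applied to the difference of two solutions.

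Finally, the step I expect to be the main obstacle is the top regularity level $m=s$: there $\partial_t^{s-1}\bar f$ is only $L^2$ in time and $\partial_t^{s-1}\bar a_{ij}$ only $L^\infty(0,T;L^2(\Omega))$, so the $C^m$-in-time generation theory does not apply directly and the commutator term $(\partial_t^{s-1}\bar a_{ij})\,\partial_{x_i}\partial_{x_j}z$ is not even in $L^2$ pointwise in $t$. I would handle this by the mollification device for which Lemma~\ref{LEMMA_MOLLIFIER_PROPERTIES} was prepared: replace $\bar f$ and, where needed, $\bar a_{ij}$ by their time-mollifications \eqref{EQUATION_APPENDIX_CONVOLUTION}, solve the smoothed problems by the theory above, derive the energy estimate with $\delta$-independent constants, and pass to the limit $\delta\to0$, the crucial point being that $\partial_t\big((\bar a_{ij}\partial_{x_i}\partial_{x_j}z)_\delta-\bar a_{ij}\partial_{x_i}\partial_{x_j}z_\delta\big)\to0$ in $L^2(0,T;L^2(\Omega))$ by Lemma~\ref{LEMMA_MOLLIFIER_PROPERTIES}. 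A secondary, pervasive difficulty absent in the parabolic--hyperbolic treatment of \cite{LaPoWa2017} is that the $z$-equation does not decouple from the $(\theta,p)$-subsystem, so the generation statement, the admissibility of $Y$, and every energy estimate must be carried through for the full mixed-order system at once, which is exactly what forces the weighted multipliers above.
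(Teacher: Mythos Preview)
Your proposal is correct and follows essentially the same route as the paper's own proof: the same first-order reformulation on $X_0=H^1_0\times L^2\times H^1_0\times L^2$ with the $\bar a_{ij}(t,\cdot)$-weighted inner product, quasi-$m$-dissipativity via Lax--Milgram, the Jiang--Racke CD-system machinery for basic and higher regularity, and the energy estimate obtained by time-differentiating, testing with the multipliers $\partial_t^{n}z$, $\tfrac{1}{\gamma}A\partial_t^{n-1}\theta$, $\tfrac{1}{\gamma\eta}\partial_t^{n-1}p$, handling the top level $n=s$ by Friedrichs mollification and Lemma~\ref{LEMMA_MOLLIFIER_PROPERTIES}, recovering spatial regularity from the equations via Assumption~\ref{ASSUMPTION_APPENDIX}.4, and closing with Gronwall. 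The only minor slip is the phrase about ``absorbing the commutator contributions into the dissipation $\|p\|_{L^2}^2$'': for the local-in-time estimate the $p$-damping is not used to absorb anything---all lower-order and commutator terms simply go to the right-hand side and are handled by Gronwall; the damping becomes relevant only in Section~\ref{SECTION_LONG_TIME_BEHAVIOR}.
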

	
	\begin{proof}
		This proof adopts the abstract solution theory \cite[Theorems A.3 and A.9]{JiaRa2000}.
		When treating the $z$-variable, we follow the streamlines of Lasiecka {\it et al.} \cite[Appendix A.1]{LaPoWa2017}.
		\medskip \\
		{\em Existence and uniqueness at basic regularity level.}
		For $t \in [0, T]$, define a bounded linear operator
		\begin{equation}
			\mathcal{A}(t) :=
			\begin{pmatrix}
				0                                                         & -1                    &  0                           & 0 \\
				-\bar{a}_{ij}(t, \cdot) \partial_{x_{i}} \partial_{x_{j}} &  0                    & -\frac{\alpha}{\gamma} A + B & 0 \\
				0                                                         &  \frac{\alpha}{\beta} &  0                           & \frac{1}{\beta} \\ 
				0                                                         &  0                    & -\frac{\eta}{\tau} A         & \frac{1}{\tau}
			\end{pmatrix}
			\colon Y_{1} \longrightarrow X_{0},
			\label{EQUATION_OPERATOR_FAMILY_LINEAR_WAVE_EQUATION}
		\end{equation}
		where the Hilbert space
		\begin{align*}
            X_{0} := H^{1}_{0}(\Omega) \times L^{2}(\Omega) \times H^{1}_{0}(\Omega) \times L^{2}(\Omega)
		\end{align*}
		is endowed with the inner product
        \begin{equation*}
			\langle V, \bar{V}\rangle_{t} :=
			\big\langle \bar{a}_{ij}(t, \cdot) \partial_{x_{i}} z, \partial_{x_{j}} \bar{z}\big\rangle_{L^{2}(\Omega)} + \langle y, \bar{y}\rangle_{L^{2}(\Omega)} +
			\tfrac{\beta}{\gamma} \langle A^{1/2} \theta, A^{1/2} \bar{\theta}\rangle_{L^{2}(\Omega)} + \tfrac{\tau}{\gamma \eta} \langle p, \bar{p}\rangle_{L^{2}(\Omega)}
		\end{equation*}
		for $(z, y, \theta, p), (\bar{z}, \bar{y}, \bar{\theta}, \bar{p}) \in X_{0}$
		(the bilinear form $\langle \cdot, \cdot\rangle_{t}$ is equivalent with the standard inner product on $X_{0}$ due to coercivity Assumption \ref{ASSUMPTION_APPENDIX}.3)
		and the Hilbert space
		\begin{align*}
			Y_{1} :=
			\big(H^{2}_{0}(\Omega) \cap H^{1}_{0}(\Omega)\big) \times H^{1}_{0}(\Omega) \times
			\big(H^{2}_{0}(\Omega) \cap H^{1}_{0}(\Omega)\big) \times H^{1}_{0}(\Omega)
		\end{align*}
		is equipped with the usual inner product.
		With this notation, letting $V := (z, \partial_{t} z, \theta, p)$,
		Equations (\ref{EQUATION_LINEARIZED_SYSTEM_PDE_1})--(\ref{EQUATION_LINEARIZED_SYSTEM_IC_2})
		can be cast into the form of an abstract Cauchy problem
		\begin{equation}
            \label{EQUATION_LINEARIZED_SYSTEM_CD_SYSTEM}
			\partial_{t} V(t) + \mathcal{A}(t) V(t) = F(t) \text{ in } (0, T), \quad V(0) = V^{0}
		\end{equation}
		with $F = (0, \bar{f}, 0, 0)$ and $V^{0} = (z^{0}, z^{1}, \theta^{0}, p^{0})$.

		We want to show that the triple $\big(\mathcal{A}; X_{0}, Y_{1}\big)$ is a CD-system as defined in \cite[Section A.1]{JiaRa2000}.
		Obviously, $D\big(\mathcal{A}(t)\big) = Y_{1}$. In particular, this means the domain of $\mathcal{A}(t)$ is time-independent, 
		and the operator $\mathcal{A}(t)$ itself is closed. 
		Indeed, suppose 
		\begin{equation}
            \notag
            \mathcal{A}(t)
            \begin{pmatrix}
                z \\ y \\ \theta \\ p
            \end{pmatrix}
            =
            \begin{pmatrix}
                -y \\
                -\bar{a}_{ij}(t, \cdot) \partial_{x_{i}} \partial_{x_{j}} z - \frac{\alpha}{\gamma} A\theta + B \theta \\
                \frac{\alpha}{\beta} y + \frac{1}{\beta} p \\
                -\frac{\eta}{\tau} A \theta + \frac{1}{\tau} p
            \end{pmatrix} \in
            H^{1}_{0}(\Omega) \times L^{2}(\Omega) \times L^{2}(\Omega) \times L^{2}(\Omega).
		\end{equation}
		Inspecting the first component, we get $y \in H^{1}_{0}(\Omega)$. Since $p \in L^{2}(\Omega)$ and $0 \in \rho\big(\mathcal{A}(t)\big)$,
		the fourth inclusion yields $\theta \in H^{2}(\Omega) \cap H^{1}_{0}(\Omega)$.
        Now, exploring the second inclusion, \ref{ASSUMPTION_APPENDIX}.4 suggests $z \in H^{2}(\Omega) \cap H^{1}_{0}(\Omega)$.
		Finally, combining these regularity properties, the third inclusion furnishes $y \in H^{1}_{0}(\Omega)$.
		
		For $t \in [0, T]$, consider the ``elliptic'' problem
		\begin{equation}
            \label{EQUATION_LINEARIZED_SYSTEM_ELLIPTIC}
			\big(\mathcal{A}(t) + \lambda\big) V = F \quad \text{ with } \quad F \equiv (f_{1}, f_{2}, f_{3}, f_{4}) \in X_{0}. \notag
		\end{equation}
		Letting $V = (z, y, \theta, p)$ and expressing Equation (\ref{EQUATION_LINEARIZED_SYSTEM_ELLIPTIC}) in the component form, we get
        \begin{subequations}
        \begin{align}
            \label{EQUATION_LINEARIZED_SYSTEM_ELLIPTIC_COMPONENT_1}
            -y + \lambda z &= f_{1}, \\
            \label{EQUATION_LINEARIZED_SYSTEM_ELLIPTIC_COMPONENT_2}
            -\bar{a}_{ij}(t, \cdot) \partial_{x_{i}} \partial_{x_{j}} z - \tfrac{\alpha}{\gamma} A\theta + B \theta + \lambda y &= f_{2}, \\
            \label{EQUATION_LINEARIZED_SYSTEM_ELLIPTIC_COMPONENT_3}
            \tfrac{\alpha}{\beta} y + \tfrac{1}{\beta} p + \lambda \theta &= f_{3}, \\
            \label{EQUATION_LINEARIZED_SYSTEM_ELLIPTIC_COMPONENT_4}
            -\tfrac{\eta}{\tau} A \theta + \tfrac{p}{\tau} + \lambda p &= f_{4}.
        \end{align}
        \end{subequations}
        Solving Equations (\ref{EQUATION_LINEARIZED_SYSTEM_ELLIPTIC_COMPONENT_1}) and (\ref{EQUATION_LINEARIZED_SYSTEM_ELLIPTIC_COMPONENT_4}) for $y$ and $z$, respectively, 
        \begin{equation}
            \label{EQUATION_LINEARIZED_SYSTEM_ELLIPTIC_ELIMINATION}
            y = \lambda z  - f_{1} \quad \text{ and } p = \big(\lambda + \tfrac{1}{\tau}\big)^{-1} \big(f_{4} + \tfrac{\eta}{\tau} A\theta\big) \notag
        \end{equation}
        and plugging the result into Equations (\ref{EQUATION_LINEARIZED_SYSTEM_ELLIPTIC_COMPONENT_2}), (\ref{EQUATION_LINEARIZED_SYSTEM_ELLIPTIC_COMPONENT_3}), we arrive at
        \begin{subequations}
        \begin{align}
            \label{EQUATION_LINEARIZED_SYSTEM_ELLIPTIC_COMPONENT_TRANSFORMED_1}
            \big(\lambda^{2} - \bar{a}_{ij}(t, \cdot) \partial_{x_{i}} \partial_{x_{j}}\big) z - \big(\tfrac{\alpha}{\gamma} A - B\big) \theta &= f_{2} + \lambda f_{1}, \\
            \label{EQUATION_LINEARIZED_SYSTEM_ELLIPTIC_COMPONENT_TRANSFORMED_2}
            \tfrac{\alpha}{\beta} \lambda z + \Big(\tfrac{\eta}{\beta \tau} \big(\lambda + \tfrac{1}{\tau}\big)^{-1} A + \lambda\Big) \theta &=
            f_{3} + \tfrac{\alpha}{\beta} f_{1} - \tfrac{1}{\beta} \big(\lambda + \tfrac{1}{\tau}\big)^{-1} f_{4}.
        \end{align}
        \end{subequations}
        Multiplying Equations (\ref{EQUATION_LINEARIZED_SYSTEM_ELLIPTIC_COMPONENT_TRANSFORMED_1})--(\ref{EQUATION_LINEARIZED_SYSTEM_ELLIPTIC_COMPONENT_TRANSFORMED_2})
        in $L^{2}(\Omega)$ with $\tfrac{\alpha^{2}}{\beta^{2}} \bar{z}$ and $\lambda \bar{\theta}$, respectively,
        $\bar{z}$, $\bar{\theta}$ being two arbitrary $H^{1}_{0}(\Omega)$-function,
        and integrating by parts using the boundary conditions (\ref{EQUATION_LINEARIZED_SYSTEM_BC}),
        we obtain the variational equation
        \begin{equation}
            \notag
            \mathfrak{a}\big((z, \theta), (\bar{z}, \bar{\theta})\big) = F\big((\bar{z}, \bar{\theta})\big),
        \end{equation}
        with the bilinear form
        \begin{align}
            \label{EQUATION_LINEARIZED_SYSTEM_BILINEAR_FORM}
            \begin{split}
                \mathfrak{a}\big((z, \theta), &(\bar{z}, \bar{\theta})\big) =
                \tfrac{\alpha^{2}}{\beta^{2}} \langle \bar{a}_{ij}(t, \cdot) \partial_{x_{i}} z, \partial_{x_{j}} z\rangle_{L^{2}(\Omega)} 
                + \tfrac{\alpha^{2}}{\beta^{2}} \big\langle \big(\partial_{x_{i}} \bar{a}_{ij}(t, \cdot)\big) z, \partial_{x_{j}} z\rangle_{L^{2}(\Omega)} \\
                &+ \tfrac{\alpha^{2}}{\beta^{2}} \lambda^{2} \langle z, \bar{z}\rangle_{L^{2}(\Omega)}
                - \tfrac{\alpha^{3}}{\gamma \beta^{2}} \langle A^{1/2} \theta, A^{1/2} \bar{z}\rangle_{L^{2}(\Omega)}
                + \tfrac{\alpha^{2}}{\beta^{2}} \langle B \theta, \bar{z}\rangle_{L^{2}(\Omega)}
                + \tfrac{\alpha}{\beta} \lambda^{2} \langle z, \bar{\theta}\rangle_{L^{2}(\Omega)} \\
                &+ \tfrac{\eta}{\beta \tau} \lambda \big(\lambda + \tfrac{1}{\tau}\big)^{-1} \langle A^{1/2} \theta, A^{1/2} \bar{\theta}\rangle_{L^{2}(\Omega)} 
                + \lambda^{2} \langle \theta, \bar{\theta}\rangle_{L^{2}(\Omega)}
            \end{split}
        \end{align}
        and the linear functional
        \begin{align}
            \label{EQUATION_LINEARIZED_SYSTEM_LINEAR_FUNCTIONAL}
            F\big((\bar{z}, \bar{\theta})\big) =
            \tfrac{\alpha^{2}}{\beta^{2}} \langle f_{2} + \lambda f_{1}, \bar{z}\rangle_{L^{2}(\Omega)} +
            \lambda \big\langle f_{3} + \tfrac{\alpha}{\beta} f_{1} - \tfrac{1}{\beta} \big(\lambda + \tfrac{1}{\tau}\big)^{-1} f_{4}, \bar{\theta}\big\rangle_{L^{2}(\Omega)}.
        \end{align}
        Clearly, both $\mathfrak{a}(\cdot, \cdot)$ and $F$ are continuous on $\mathcal{V} \times \mathcal{V}$ and $\mathcal{V}$, respectively,
        with $\mathcal{V} := \big(H^{1}_{0}(\Omega)\big)^{2}$.
        Further, for sufficiently large $\lambda$, applying Young's and the Poincar\'{e} \& Friedrichs inequalities
        and using the boundedness of $\|\partial_{x_{i}} \bar{a}_{ij}\|_{L^{\infty}}$ (viz. Assumption \ref{ASSUMPTION_APPENDIX}.2) and that of $B$,
        we estimate
        \begin{align}
            \label{EQUATION_LINEARIZED_SYSTEM_BILINEAR_FORM_COERCIVITY}
            \begin{split}
                \mathfrak{a}\big((z, \theta), (z, \theta)\big) &\geq
                \tfrac{\alpha^{2}}{\beta^{2}} \langle \bar{a}_{ij}(t, \cdot) \partial_{x_{i}} z, \partial_{x_{j}} z\rangle_{L^{2}(\Omega)} 
                - \varepsilon \|z\|_{H^{1}(\Omega)}^{2} - C_{\varepsilon} \|z\|_{L^{2}(\Omega)}^{2} \\
                &+ \tfrac{\alpha^{2}}{\beta^{2}} \lambda^{2} \|z\|_{L^{2}(\Omega)}^{2} - \varepsilon \|z\|_{H^{1}(\Omega)}^{2} - C_{\varepsilon} \|\theta\|_{H^{1}(\Omega)}^{2}
                - \tfrac{\alpha^{2}}{2 \beta^{2}} \lambda^{2} \|z\|_{L^{2}(\Omega)}^{2} - \tfrac{\lambda^{2}}{2} \|\theta\|_{L^{2}(\Omega)}^{2} \\
                &+ \tfrac{\eta}{\beta \tau} \lambda \big(\lambda + \tfrac{1}{\tau}\big)^{-1} \big\|A^{1/2} \theta\big\|_{L^{2}(\Omega)}^{2} + \lambda^{2} \|\theta\|_{L^{2}(\Omega)}^{2}
            \end{split}
        \end{align}
        for any $\varepsilon > 0$. Now, selecting $\varepsilon$ sufficiently small and, if necessary, increasing $\lambda$,
        we easily see that $B(\cdot, \cdot)$ is coercive, i.e.,
        \begin{equation*}
            \mathfrak{a}\big((z, \theta), (z, \theta)\big) \geq \kappa \big(\|z\|_{H^{1}(\Omega)}^{2} + \|\theta\|_{H^{1}(\Omega)}^{2}\big)
        \end{equation*}
        for some $\kappa > 0$.
        Hence, invoking Lax \& Milgram's Lemma, we obtain a unique solution $(z, \theta) \in \big(H^{1}_{0}(\Omega)\big)^{2}$
        to Equations (\ref{EQUATION_LINEARIZED_SYSTEM_ELLIPTIC_COMPONENT_TRANSFORMED_1})--(\ref{EQUATION_LINEARIZED_SYSTEM_ELLIPTIC_COMPONENT_TRANSFORMED_2}).
        By elliptic regularity (cf. Assumption \ref{ASSUMPTION_APPENDIX}.4), Equation (\ref{EQUATION_LINEARIZED_SYSTEM_ELLIPTIC_COMPONENT_TRANSFORMED_2}) implies $\theta \in H^{2}(\Omega) \cap H^{1}_{0}(\Omega)$.
        Next, plugging in $\theta$ into Equation (\ref{EQUATION_LINEARIZED_SYSTEM_ELLIPTIC_COMPONENT_TRANSFORMED_1}),
        Assumption \ref{ASSUMPTION_APPENDIX}.4 suggests $z \in H^{2}(\Omega) \cap H^{1}_{0}(\Omega)$.
        Substituting into Equation (\ref{EQUATION_LINEARIZED_SYSTEM_ELLIPTIC_ELIMINATION}),
        we further get $y, p \in H^{1}_{0}(\Omega)$.
        Hence, we found a solution $V = (z, y, \theta, p) \in D\big(\mathcal{A}(t)\big)$ to Equation (\ref{EQUATION_LINEARIZED_SYSTEM_ELLIPTIC}).
        The estimate $\|V\|_{X_{0}} \leq C \|F\|_{X_{0}}$ for some $C > 0$
        immediately follows from Lax \& Milgram's Lemma and Equation (\ref{EQUATION_LINEARIZED_SYSTEM_ELLIPTIC_ELIMINATION}).
        Hence, $\big(\mathcal{A}(t) + \lambda\big)$ is a maximal operator on $X_{0}$.

        Further, we prove the operator $\mathcal{A}(t) + \lambda$ for any sufficiently large $\lambda > 0$.
        Using integration by parts and the boundary conditions (\ref{EQUATION_LINEARIZED_SYSTEM_BC}), we estimate
        \begin{align*}
        	\big\langle \mathcal{A}(t) V, V\big\rangle_{X_{0}} &=
        	-\big\langle \bar{a}_{ij}(t, \cdot) \partial_{x_{i}} \partial_{x_{j}} z, y\rangle_{L^{2}(\Omega)}
        	-\big\langle \bar{a}_{ij}(t, \cdot) \partial_{x_{i}} z, \partial_{x_{j}} y\rangle_{L^{2}(\Omega)} \\
        	&-\tfrac{\alpha}{\gamma} \langle A\theta, y\rangle_{L^{2}(\Omega)}
        	+ \tfrac{\alpha}{\gamma} \langle A^{1/2} \theta, A^{1/2} y\rangle_{L^{2}(\Omega)}
        	+ \tfrac{1}{\gamma} \langle A^{1/2} p, A^{1/2} \theta\rangle_{L^{2}(\Omega)} \\
        	&+ \langle B \theta, y\rangle_{L^{2}(\Omega)} - \tfrac{1}{\gamma} \langle A \theta, p\rangle_{L^{2}(\Omega)} + \tfrac{1}{\gamma \eta} \|p\|_{L^{2}(\Omega)}^{2} \\
        	&= 
        	\tfrac{1}{\gamma \eta} \|p\|_{L^{2}(\Omega)}^{2}
        	- \big\|\partial_{x_{i}} \bar{a}_{ij}(t, \cdot)\big)\big\|_{L^{\infty}(\Omega)} \|z\|_{H^{1}(\Omega)} \|y\|_{L^{2}(\Omega)} 
        	+ \langle B \theta, y\rangle_{L^{2}(\Omega)} \\
        	&\geq -\lambda_{0} \|V\|_{X_{0}}^{2}
        \end{align*}
        for some $\lambda_{0} > 0$ depending on $\gamma$, $\eta$, $\tau$, 
        $\|B\|_{L(L^{2}(\Omega))}$ and $\big\|\partial_{x_{i}} \bar{a}_{ij}(t, \cdot)\big\|_{L^{\infty}((0, T) \times \Omega)}$
        (cf. Assumption \ref{ASSUMPTION_APPENDIX}.2 and \ref{ASSUMPTION_APPENDIX}.3).
        Hence, by virtue of Lummer \& Phillips' Theorem and standard perturbation results, 
        $\mathcal{A}(t)$ is a negative generator of a $C_{0}$-semigroup of contractions on $X_{0}$.
        Summarizing, we have shown $\big(\mathcal{A}(t)\big)_{t \in [0, T]}$ is a stable family of infinitesimal negative generators
		of $C_{0}$-semigroups on $X_{0}$ with stability constants $M = 1, \omega = \lambda_{0}$.
		Taking into account regularity conditions from Assumption \ref{ASSUMPTION_APPENDIX}.5,
		we can apply \cite[Theorem A.3]{JiaRa2000} and get a unique classical solution
		\begin{equation}
			V \in C^{0}\big([0, T], Y_{1}\big) \cap C^{1}\big([0, T], X_{0}\big) \notag
		\end{equation}
		at the at basic regularity level, which is equivalent with
		\begin{align*}
			z &\in C^{2}\big([0, T], L^{2}(\Omega)\big) \cap C^{1}\big([0, T], H^{1}_{0}(\Omega)\big) \cap C^{0}\big([0, T], H^{2}(\Omega) \cap H^{1}_{0}(\Omega)\big), \\
			\theta &\in C^{1}\big([0, T], H^{1}_{0}(\Omega)\big) \cap C^{0}\big([0, T], H^{2}(\Omega) \cap H^{1}_{0}(\Omega)\big), \\
			p &\in C^{1}\big([0, T], L^{2}(\Omega)\big) \cap C^{0}\big([0, T], H^{1}_{0}(\Omega)\big).
		\end{align*}
		\noindent {\em Higher regularity.}
		For the proof of higher solution regularity,
		we consider the following increasing double scale $\big((X_{j}, Y_{j})\big)_{j \geq 0}$ of Hilbert spaces
		with $Y_{0} := X_{0}$ and
		\begin{align*}
            X_{j} &= \big(H^{j+1}(\Omega) \cap H^{1}_{0}(\Omega)\big) {\times} H^{j}(\Omega) {\times} \big(H^{j+1}(\Omega) \cap H^{1}_{0}(\Omega)\big) {\times} H^{j}(\Omega), \\
            Y^{j} &= \big(H^{j+1}(\Omega) \cap H^{1}_{0}(\Omega)\big) {\times} \big(H^{j}(\Omega) \cap H^{1}_{0}(\Omega)\big) {\times}
            \big(H^{j+1}(\Omega) \cap H^{1}_{0}(\Omega)\big) {\times} \big(H^{j}(\Omega) \cap H^{1}_{0}(\Omega)\big) \text{ for } j \geq 1.
        \end{align*}
		On the strength of Equation (\ref{EQUATION_OPERATOR_FAMILY_LINEAR_WAVE_EQUATION}), the condition
		\begin{align*}
			\partial_{t} \mathcal{A} \in \mathrm{Lip}\big([0, T], L(Y_{j+s+5}, X_{j})\big)
			\text{ for } j = 0, \dots, s - r - 1 \text{ and }
			r = 0, \dots, s - 2 \notag
		\end{align*}
		reduces to verifying
		\begin{equation}
            \label{EQUATION_PARTIAL_T_A_REGULARITY_REDUCED}
			\partial_{t}^{r} \bar{a}_{ij}(t, \cdot) \partial_{x_{i}} \partial_{x_{j}} \quad \text{ and } \quad
			\partial_{t}^{r} A, \partial_{t}^{r} B \in \mathrm{Lip}\big([0, T], L\big(H^{j+r+2}(\Omega) \cap H^{1}_{0}(\Omega), H^{j}(\Omega)\big)\big)
		\end{equation}
		for $j = 0, \dots, s - r - 1$ and $r = 0, \dots, s - 2$.
		Equation (\ref{EQUATION_PARTIAL_T_A_REGULARITY_REDUCED}) is a direct consequence of Assumption \ref{ASSUMPTION_APPENDIX}.2
		and the Sobolev's embedding $H^{\lfloor d/2\rfloor + 1}(\Omega) \hookrightarrow L^{\infty}(\Omega)$.
		In a similar fashion, exploiting Assumption \ref{ASSUMPTION_APPENDIX}.4,
		we observe for $j = 0, \dots, s-2$ and $\phi \in Y_{1}$ and a.e. $t \in [0, T]$ that $\mathcal{A}(t) \phi \in X_{j}$ implies
		\begin{equation}
			\phi \in Y_{j+1} \text{ and }
			\|\phi\|_{Y_{j+1}} \leq K \big(\|\mathcal{A}(t) \phi\|_{X_{j}} + \|\phi\|_{X_{0}}\big) 
			\text{ for some constant } K > 0, \notag
		\end{equation}
		which does not depend on $\phi$.
		Further, Assumption \ref{ASSUMPTION_APPENDIX}.5 yields
		\begin{equation}
			\partial_{t} F \in C^{0}\big([0, T], X_{s-1-k}\big)
			\text{ for } k = 0, \dots, s-2 \text{ and }
			\partial_{t}^{s-1} F \in L^{1}(0, T; X_{0}). \notag
		\end{equation}
		Finally, Assumption \ref{ASSUMPTION_APPENDIX}.6 implies compatibility conditions
		in sense of \cite[Equations (A.8) and (A.9)]{JiaRa2000}.
		Hence, applying \cite[Theorem A.9]{JiaRa2000} at the energy level $s - 1$,
		we obtain additional regularity for the classical solution satisfying
		\begin{equation}
			V \in \bigcap_{m = 0}^{s-1} C^{m}\big([0, T], Y_{s - 1 - m}\big). \notag
		\end{equation}
		Resubstituting, this yields the desired regularity for $z, \theta, p$. \medskip \\
		{\em Energy estimates.}
		For $n = 1, \dots, s - 1$, applying the $\partial_{t}^{n-1}$-operator to Equations
		(\ref{EQUATION_LINEARIZED_SYSTEM_PDE_1})--(\ref{EQUATION_LINEARIZED_SYSTEM_PDE_3}) and recalling the compatibility conditions from Assumption \ref{ASSUMPTION_APPENDIX}.6, obtain
        \begin{subequations}
        \begin{align}
            \partial_{t}^{2} \big(\partial_{t}^{n - 1} z\big) - \bar{a}_{ij} \partial_{x_{i}} \partial_{x_{j}} \big(\partial_{t}^{n - 1} z\big) 
            - \big(\tfrac{\alpha}{\gamma} A - B\big) \big(\partial_{t}^{n - 1} \theta\big) &= h^{n} & &\text{in } (0, T) \times \Omega,
            \label{EQUATION_LINEARIZED_SYSTEM_PDE_DIFFERENTIATED_WRT_TIME_1} \\
            \beta \partial_{t} \big(\partial_{t}^{n - 1} \theta\big) + \big(\partial_{t}^{n - 1} p\big) +
            \alpha \partial_{t} \big(\partial_{t}^{n - 1} z\big) &= 0 & &\text{in } (0, T) \times \Omega,
            \label{EQUATION_LINEARIZED_SYSTEM_PDE_DIFFERENTIATED_WRT_TIME_2} \\
            \tau \partial_{t} \big(\partial_{t}^{n - 1} p\big) + \big(\partial_{t}^{n - 1} p\big) - \eta A \big(\partial_{t}^{n - 1} \theta\big) 
            &= 0 & &\text{in } (0, T) \times \Omega,
            \label{EQUATION_LINEARIZED_SYSTEM_PDE_DIFFERENTIATED_WRT_TIME_3} \\
            \partial_{t}^{n - 1} z = 0, \quad \partial_{t}^{n - 1} \theta  &= 0 & &\text{in } (0, T) \times \partial \Omega,
            \label{EQUATION_LINEARIZED_SYSTEM_DIFFERENTIATED_WRT_TIME_BC} \\
            \partial_{t}^{n - 1} z(0, \cdot) = z^{n - 1}, \quad \partial_{t} \big(\partial_{t}^{n - 1} z\big)(0, \cdot) &= z^{n} & &\text{in } \Omega,
            \label{EQUATION_LINEARIZED_SYSTEM_DIFFERENTIATED_WRT_TIME_IC_1} \\
            \partial_{t}^{n - 1} \theta(0, \cdot) = \theta^{n - 1}, \quad \partial_{t}^{n - 1} p(0, \cdot) &= p^{n - 1} & &\text{in } \Omega.
            \label{EQUATION_LINEARIZED_SYSTEM_DIFFERENTIATED_WRT_TIME_IC_2}
        \end{align}
        \end{subequations}
        with
        \begin{equation}
			h^{n - 1} = \partial_{t}^{n - 1} \bar{f} +
			\sum_{m = 1}^{n - 1} \binom{n - 1}{m} \big(\partial_{t}^{m} \bar{a}_{ij}\big)
			\partial_{x_{i}} \partial_{x_{j}} \partial_{t}^{n - 1 - m} z.
			\label{EQUATION_FUNCTION_N_MINUS_ONE_DEFINITION}
		\end{equation}
		For $t \in [0, T]$, multiplying Equations (\ref{EQUATION_LINEARIZED_SYSTEM_PDE_DIFFERENTIATED_WRT_TIME_2}) and (\ref{EQUATION_LINEARIZED_SYSTEM_PDE_DIFFERENTIATED_WRT_TIME_3}) 
		in $L^{2}\big((0, t) \times \Omega\big)$ with $\frac{1}{\gamma} A \partial_{t}^{n - 1} \theta$ and $\frac{1}{\gamma \eta} \partial_{t}^{n - 1} p$, respectively,
		integrating by parts while taking into account the boundary conditions (\ref{EQUATION_LINEARIZED_SYSTEM_DIFFERENTIATED_WRT_TIME_BC}) and
		adding up the resulting identities, we get
		\begin{align}
            \notag
            \frac{\beta}{2 \gamma} \big\|A^{1/2} \partial_{t}^{n - 1} &\theta\big\|_{L^{2}(\Omega)}^{2} \big|_{\tau = 0}^{\tau = t}
            + \frac{1}{\gamma} \int_{0}^{t} \langle A^{1/2} \partial_{t}^{n - 1} p, A^{1/2} \partial_{t}^{n - 1} \theta\rangle_{L^{2}(\Omega)} \mathrm{d}\tau \\
            \label{EQUATION_APPENDIX_ENERGY_ESTIMATE_THETA_AND_P_AT_LEVEL_N_MINUS_ONE}
            &+ \frac{\alpha}{\gamma} \int_{0}^{t} \langle A^{1/2} \partial_{t}^{n} z, A^{1/2} \partial_{t}^{n - 1} \theta\rangle_{L^{2}(\Omega)} \mathrm{d}\tau
            + \frac{\tau}{2 \gamma \eta} \big\|\partial_{t}^{n - 1} p\big\|_{L^{2}(\Omega)}^{2} \big|_{\tau = 0}^{\tau = t} \\
            &+
            \notag
            \frac{1}{\gamma \eta} \int_{0}^{t} \big\|\partial_{t}^{n - 1} p\big\|_{L^{2}(\Omega)}^{2} \mathrm{d}\tau -
            \frac{1}{\gamma} \int_{0}^{t} \langle A^{1/2} \partial_{t}^{n - 1} \theta, A^{1/2} \partial_{t}^{n - 1} p\rangle_{L^{2}(\Omega)} \mathrm{d}\tau \equiv 0.
		\end{align}
        Summing up over $n = 1, \dots, s - 1$, we find
        \begin{align}
            \label{EQUATION_APPENDIX_ENERGY_ESTIMATE_THETA_AND_P_SUMMED_OVER_LEVEL_N}
            \begin{split}
                \frac{1}{C}  \sum_{n = 0}^{s -2} \Big(\big\|\partial_{t}^{n} \theta(t, \cdot)\big\|_{H^{1}(\Omega)}^{2} &+ \big\|\partial_{t}^{n} p(t, \cdot)\big\|_{L^{2}(\Omega)}^{2}\Big) 
                + \frac{\alpha}{\gamma} \sum_{n = 1}^{s} \int_{0}^{t} \langle A^{1/2} \partial_{t}^{n} z, A^{1/2} \partial_{t}^{n - 1} \theta\rangle_{L^{2}(\Omega)} \mathrm{d}\tau \\
                &\leq C \Lambda_{0} + \sum_{n = 0}^{s -2} \int_{0}^{t} 
                \Big(\big\|\partial_{t}^{n} \theta(\tau, \cdot)\big\|_{H^{1}(\Omega)}^{2} + \big\|\partial_{t}^{n} p(\tau, \cdot)\big\|_{L^{2}(\Omega)}^{2}\Big) \mathrm{d}\tau
            \end{split}
        \end{align}
        for a large generic constant $C > 0$.
        To derive an estimate for $\partial_{t}^{s - 1} \theta$ and $\partial_{t}^{s - 1} p$, we use the mollifier from Equation (\ref{EQUATION_APPENDIX_CONVOLUTION}).
        Convolving Equations (\ref{EQUATION_LINEARIZED_SYSTEM_PDE_DIFFERENTIATED_WRT_TIME_2}), (\ref{EQUATION_LINEARIZED_SYSTEM_PDE_DIFFERENTIATED_WRT_TIME_3}) for $n = s - 1$ 
        with the Friedrichs' kernel $\phi_{\delta}$, we obtain for any $t \in [\varepsilon, T - \varepsilon]$:
        \begin{align}
            \big(\partial_{t}^{s - 1} \theta\big)_{\delta} + \big(\partial_{t}^{s - 2} p\big)_{\delta} +
            \alpha \big(\partial_{t}^{s - 1} z\big)_{\delta} &= 0 & &\text{in } (0, T) \times \Omega,
            \label{EQUATION_LINEARIZED_SYSTEM_PDE_DIFFERENTIATED_WRT_TIME_MOLLIFIED_2} \\
            \tau \big(\partial_{t}^{s - 1} p\big)_{\delta} + \big(\partial_{t}^{s - 2} p\big)_{\delta} - \eta A \big(\partial_{t}^{s - 2} \theta\big)_{\delta}
            &= 0 & &\text{in } (0, T) \times \Omega,
            \label{EQUATION_LINEARIZED_SYSTEM_PDE_DIFFERENTIATED_WRT_TIME_MOLLIFIED_3}
        \end{align}
        where we used fact that $\big(\partial_{t} w\big)_{\delta} = \partial_{t} \big(w_{\delta}\big)$
        and $\big(\partial^{n} \theta\big)_{\delta}$, $\big(\partial^{n} p\big)_{\delta}$ satisfy the 
        same homogeneous Dirichlet boundary conditions as for $\partial_{t}^{n} \theta$ and $\partial_{t}^{n} p$.
        Applying the $\partial_{t}$-operator to Equations (\ref{EQUATION_LINEARIZED_SYSTEM_PDE_DIFFERENTIATED_WRT_TIME_MOLLIFIED_2}), (\ref{EQUATION_LINEARIZED_SYSTEM_PDE_DIFFERENTIATED_WRT_TIME_MOLLIFIED_3})
        and multiplying in $L^{2}\big((0, t) \times \Omega)$ with $\frac{1}{\gamma} A \big(\partial_{t}^{s - 1} \theta\big)_{\delta}$ 
        and $\frac{1}{\gamma \eta} \big(\partial_{t}^{s - 1} p\big)_{\delta}$, respectively,
        Similar to Equations (\ref{EQUATION_APPENDIX_ENERGY_ESTIMATE_THETA_AND_P_AT_LEVEL_N_MINUS_ONE}), (\ref{EQUATION_APPENDIX_ENERGY_ESTIMATE_THETA_AND_P_SUMMED_OVER_LEVEL_N}), 
        we get for $t \in [\varepsilon, T - \varepsilon]$:
        \begin{align}
            \notag
            \frac{1}{C} \Big(\big\|(\partial_{t}^{s - 1} \theta)_{\delta}(t, \cdot)\big\|_{H^{1}(\Omega)}^{2} &+ \big\|(\partial_{t}^{s - 1} p)_{\delta}(t, \cdot)\big\|_{L^{2}(\Omega)}^{2}\Big) 
            + \frac{\alpha}{\gamma} 
            \int_{0}^{t} \big\langle \partial_{t}^{s} (A^{1/2} z)_{\delta}, \partial_{t}^{s - 1} (A^{1/2} \theta)_{\delta}\big\rangle_{L^{2}(\Omega)} \mathrm{d}\tau \\
            \label{EQUATION_APPENDIX_ENERGY_ESTIMATE_THETA_AND_P_HIGHEST_TIME_DERIVATIVE}
            &\leq C \Big(\big\|(\partial_{t}^{s - 1} \theta)_{\delta}(\varepsilon, \cdot)\big\|_{H^{1}(\Omega)}^{2} 
            + \big\|(\partial_{t}^{s - 1} p)_{\delta}(\varepsilon, \cdot)\big\|_{L^{2}(\Omega)}^{2}\Big) \\
            \notag
            &+ \int_{0}^{t} 
            \Big(\big\|(\partial_{t}^{n} \theta)_{\delta}(\tau, \cdot)\big\|_{H^{1}(\Omega)}^{2} + \big\|(\partial_{t}^{n} p)_{\delta}(\tau, \cdot)\big\|_{L^{2}(\Omega)}^{2}\Big) \mathrm{d}\tau.
        \end{align}

		Repeating the same procedure for the $z$-component (cf. \cite[pp. 216--218]{LaPoWa2017}), we get
        \begin{align}
            \label{EQUATION_APPENDIX_ENERGY_ESTIMATE_Z_LOWER_TIME_DERIVATIVES}
			\begin{split}
				\frac{1}{C} \sum_{n = 1}^{s - 1} \Big(\big\|\partial_{t}^{n} &z(t, \cdot)\big\|_{L^{2}(\Omega)}^{2} + \big\|\partial_{t}^{n - 1} z(t, \cdot)\big\|_{H^{1}(\Omega)}^{2}\Big)
				- \frac{\alpha}{\gamma} \sum_{n = 1}^{s - 1} \int_{0}^{t} \langle A^{1/2} \partial_{t}^{n} z, A^{1/2} \partial_{t}^{n - 1} \theta\rangle_{L^{2}(\Omega)} \mathrm{d}\tau \\
				&\leq
				C(\phi_{0}, \gamma_{0}) \Lambda_{0} + C(\phi, \gamma_{0}) \int_{0}^{t} \big\|\bar{D}^{s} z\big\|_{L^{2}(\Omega)}^{2} \mathrm{d}\tau
				+ C \sum_{n = 1}^{s - 1} \int_{0}^{t} \big\|\partial_{t}^{n - 1} \theta\big\|_{L^{2}(\Omega)}^{2} \mathrm{d}\tau
			\end{split}
		\end{align}
		as well as
		\begin{align}
            \label{EQUATION_APPENDIX_ENERGY_ESTIMATE_Z_HIGHEST_TIME_DERIVATIVE}
			\begin{split}
				\frac{1}{C} \Big(\big\|(\partial_{t}^{s} z)_{\delta}(t, &\cdot)\big\|_{L^{2}(\Omega)}^{2} + \big\|(\partial_{t}^{s - 1} z)_{\delta}(t, \cdot)\big\|_{H^{1}(\Omega)}^{2}\Big)
				- \frac{\alpha}{\gamma} \int_{0}^{t} \big\langle \partial_{t}^{s} (A^{1/2} z)_{\delta}, \partial_{t}^{s - 1} (A^{1/2} \theta)_{\delta}\big\rangle_{L^{2}(\Omega)} \mathrm{d}\tau \\
				&\leq
				C(\phi_{0}, \gamma_{0}) \Big(\big\|(\bar{D}^{s} z)_{\delta}(\varepsilon, \cdot)\big\|_{L^{2}(\Omega)}^{2} +
				\big\|(\partial_{t}^{s - 1} z)_{\delta}(t, \cdot)\big\|_{L^{2}(\Omega)}^{2}\Big) \\
				&+ C(\gamma_{0}, \gamma_{0}) (1 + T^{-1/2}) \int_{0}^{t} \big\|(\bar{D}^{s} z)_{\delta}(\tau, \cdot)\big\|_{L^{2}(\Omega)}^{2} \mathrm{d}\tau \\
				&+ T^{1/2} \int_{\varepsilon}^{t} \big\|\partial_{t} (h^{s - 2})_{\delta}\big\|_{L^{2}}^{2} \mathrm{d}\tau
				+ \int_{\varepsilon}^{t} \|\eta_{\delta}\|_{L^{2}(\Omega)}^{2} \mathrm{d}\tau
				+ C \sum_{n = 1}^{s - 1} \int_{0}^{t} \big\|(\partial_{t}^{s - 1} \theta)_{\delta}\big\|_{L^{2}(\Omega)}^{2} \mathrm{d}\tau
			\end{split}
		\end{align}
		for $t \in [\varepsilon, T - \varepsilon]$ with the `commutator'
		\begin{equation*}
            \eta_{\delta}(t, \cdot) =
            \Big(\bar{a}_{ij}(t, \cdot) \partial_{t}^{s - 2} \partial_{x_{i}} \partial_{x_{j}} z(t, \cdot)\Big)_{\delta} -
            \bar{a}_{ij}(t, \cdot) \Big(\partial_{t}^{s - 2} \partial_{x_{i}} \partial_{x_{j}} z(t, \cdot)\Big)_{\delta}.
		\end{equation*}
		Adding up Equations (\ref{EQUATION_APPENDIX_ENERGY_ESTIMATE_THETA_AND_P_SUMMED_OVER_LEVEL_N}), 
		(\ref{EQUATION_APPENDIX_ENERGY_ESTIMATE_THETA_AND_P_HIGHEST_TIME_DERIVATIVE})--(\ref{EQUATION_APPENDIX_ENERGY_ESTIMATE_Z_HIGHEST_TIME_DERIVATIVE}),
		invoking the the regularity of $z$, $\theta$ and $p$ from the previous step of the proof
		and using \cite[Equation (A.17)]{LaPoWa2017} and Lemma \ref{LEMMA_MOLLIFIER_PROPERTIES} 
		to ``eliminate'' $(\partial_{t}^{s - 1} z)_{\delta}(t, \cdot)$ and $\eta_{\delta}$ on the right-hand side
		of Equation (\ref{EQUATION_APPENDIX_ENERGY_ESTIMATE_Z_HIGHEST_TIME_DERIVATIVE}),
        we send $\varepsilon \to 0$ and then $\delta \to 0$ to arrive at the estimate
		\begin{align}
            \label{EQUATION_APPENDIX_ENERGY_ESTIMATE_TIME_DERIVATIVES_COMBINED}
            \begin{split}
            	\sum_{n = 1}^{s} \Big(\big\|\partial_{t}^{n} &z(t, \cdot)\big\|_{L^{2}(\Omega)}^{2} + \big\|\partial_{t}^{n - 1} z(t, \cdot)\big\|_{H^{1}(\Omega)}^{2}\Big)
            	+ \sum_{n = 1}^{s - 1} \Big(\big\|\partial_{t}^{n} \theta(t, \cdot)\big\|_{H^{1}(\Omega)}^{2} + \big\|\partial_{t}^{n} \theta(t, \cdot)\big\|_{L^{2}(\Omega)}^{2}\Big) \\
				&\leq C(\phi_{0}, \gamma_{0}) \Lambda_{0} + C(\phi, \gamma_{0}) (1 + T^{1/2} + T^{-1/2}) \\
				&\times \int_{0}^{t} \Big(\|\bar{D}^{s} z(\tau, \cdot)\|_{L^{2}(\Omega)}^{2} +
				\|\bar{D}^{s - 1} \theta(\tau, \cdot)\|_{H^{1}(\Omega)}^{2} + \|\bar{D}^{s - 1} p(\tau, \cdot)\|_{L^{2}(\Omega)}^{2}\Big) \mathrm{d}\tau.
            \end{split}
		\end{align}
		To close the estimate in Equation (\ref{EQUATION_APPENDIX_ENERGY_ESTIMATE_TIME_DERIVATIVES_COMBINED}),
        respective space-derivatives on the left-hand side need to be reconstructed.
        To this end, we use Equations (\ref{EQUATION_LINEARIZED_SYSTEM_PDE_DIFFERENTIATED_WRT_TIME_1})--(\ref{EQUATION_LINEARIZED_SYSTEM_PDE_DIFFERENTIATED_WRT_TIME_3}) to write
        \begin{align*}
            \eta A \big(\partial_{t}^{n - 1} \theta\big) &= \tau \partial_{t} \big(\partial_{t}^{n - 1} p\big) + \big(\partial_{t}^{n - 1} p\big), \\
            \bar{a}_{ij} \partial_{x_{i}} \partial_{x_{j}} \big(\partial_{t}^{n - 1} z\big) &= \partial_{t}^{2} \big(\partial_{t}^{n - 1} z\big)
            - \tfrac{\alpha}{\gamma} A \big(\partial_{t}^{n - 1} \theta\big) + B \big(\partial_{t}^{n - 1} \theta\big) - h^{n}, \\
            \big(\partial_{t}^{n - 1} p\big) &= -\beta \partial_{t} \big(\partial_{t}^{n - 1} \theta\big) - \alpha \partial_{t} \big(\partial_{t}^{n - 1} z\big).
        \end{align*}
        Starting at $n = s - 1$ and iteratively going down to $n = 1$,
        while exploiting the elliptic regularity of $A$ and $\bar{a}_{ij} \partial_{x_{i}} \partial_{x_{j}}$ from Assumption \ref{ASSUMPTION_APPENDIX}.4
        as well as regularity of $h^{n}$ from Assumption \ref{ASSUMPTION_APPENDIX}.5, repeating the arguments of our closedness proof for $\mathcal{A}(t)$
        at the basic energy level and the streamlines of \cite[pp. 217--218]{LaPoWa2017}, get
        \begin{equation}
			\begin{split}
                \big\|\bar{D}^{s} z(t, \cdot)\big\|_{L^{2}(\Omega)}^{2} &+
                \big\|\bar{D}^{s - 1} \theta(t, \cdot)\big\|_{H^{1}(\Omega)}^{2} +
                \big\|\bar{D}^{s - 1} p(t, \cdot)\big\|_{L^{2}(\Omega)}^{2} \\
                &\leq C(\phi_{0}, \gamma_{0}, \gamma_{1}) \Lambda_{0}
                + C(\phi, \gamma_{0}, \gamma_{1}) (1 + T^{1/2} + T + T^{-1/2}) \\
                &\times \int_{0}^{t} \Big(\big\|\bar{D}^{s} z(\tau, \cdot)\big\|_{L^{2}(\Omega)}^{2} +
                \big\|\bar{D}^{s - 1} \theta(\tau, \cdot)\big\|_{H^{1}(\Omega)}^{2} +
                \big\|\bar{D}^{s - 1} p(\tau, \cdot)\big\|_{L^{2}(\Omega)}^{2}\Big) \mathrm{d}\tau.
			\end{split}
			\notag
		\end{equation}
		The assertion of Theorem \ref{THEOREM_APPENDIX} is now a direct consequence of Gronwall's inequality.
	\end{proof}
\end{appendix}

\section*{Acknowledgment}
IL's research was partially funded by the NSF-DMS Grant \# 1713506.
MP's work on this project was supported by the Deutsche Forschungsgemeinschaft (DFG) through CRC 1173.
MP is also grateful to Dinstinguished Professors Irena Lasiecka and Roberto Triggiani for their hospitality during his research visit to the University of Memphis, TN.
XW thanks Professor Roland Schnaubelt for hosting him during his research stay at Karlsruhe Institute of Technology, Germany.

\bibliographystyle{plain}
\bibliography{bibliography}

\begin{thebibliography}{10}

\bibitem{AmBeMi1984}
S.~A. Ambartsumian, M.~V. Belubekian, and M.~M. Minassian.
\newblock On the problem of vibrations of non-linear-elastic electroconductive
  plates in transverse and longitudinal magnetic fields.
\newblock {\em International Journal of Non-Linear Mechanics}, 19(2):141--149,
  1984.

\bibitem{Am1970}
S.~A. Ambartsumyan.
\newblock {\em Theory of Anisotropic Plates: Strength, Stability, Vibrations},
  volume~II of {\em Progress in Material Science Series}.
\newblock Technomic Publishing Co. Inc., Stamford, CT, 1970.

\bibitem{Av2000}
G.~Avalos.
\newblock Exact controllability of a thermoelastic system with control in the
  thermal component only.
\newblock {\em Differential and Integral Equations}, 22:1--15, 2000.

\bibitem{AvLa1997}
G.~Avalos and I.~Lasiecka.
\newblock Exponential stability of a thermoelastic system without mechanical
  dissipation.
\newblock {\em Rend. Instit. Mat. Univ. Trieste Suppl.}, 28:1--28, 1997.

\bibitem{AvLa1998}
G.~Avalos and I.~Lasiecka.
\newblock Uniform decays in nonlinear thermoelasticity.
\newblock In {\em Optimal Control, Theory, Methods and Applications},
  volume~15, pages 1--22. Springer, Boston, MA, 1998.

\bibitem{AvLa2004}
G.~Avalos and I.~Lasiecka.
\newblock On the null-controllability of thermoelastic plates and singularity
  of the associated minimal energy function.
\newblock {\em Journal of Mathematical Analysis and its Applications},
  10:34--61, 2004.

\bibitem{AvLa2005}
G.~Avalos and I.~Lasiecka.
\newblock Asymptotic rates of blow-up for the minimal energy function for the
  nullcontrollability of thermoelastic plates: {T}he free case.
\newblock In {\em A Series of Lecture Notes in Pure and Applied Mathematics:
  Control Theory of Partial Differential Equations}, volume 242, pages 1--49.
  Chapman \& Hall, CRC, Boca Raton, FL, 2005.

\bibitem{BaIv2014}
M.~B. Babenkov and E.~A. Ivanova.
\newblock Analysis of the wave propagation processes in heat transfer problems
  of the hyperbolic type.
\newblock {\em Continuum Mech. Thermodyn.}, 26:483--502, 2014.

\bibitem{BeNa2002}
A.~Benabdallah and G.~Naso.
\newblock Nullcontrolability of thermoelastic plates.
\newblock {\em Abstract and Applied Analysis}, 7:585--599, 2002.

\bibitem{Cha1998}
D.~S. Chandrasekharaiah.
\newblock Hyperbolic thermoelasticity: a review of recent literature.
\newblock {\em Appl. Mech. Rev.}, 51:705--729, 1998.

\bibitem{DeShi2009}
R.~Denk, R.~Racke, and Y.~Shibata.
\newblock ${L}_p$ theory for the linear thermoelastic plate equations in
  bounded and exterior domains.
\newblock {\em Adv. Differential Equations}, 14:685--715, 2009.

\bibitem{DeShi2017}
R.~Denk and Y.~Shibata.
\newblock Maximal regularity for the thermoelastic plate equations with free
  boundary conditions.
\newblock {\em Evolution Equations}, 17:215--261, 2017.

\bibitem{ElLaTri2000}
M.~Eller, I.~Lasiecka, and R.~Triggiani.
\newblock Simultaneous exact-approximate boundary controllability of
  thermo-elastic plates with variable thermal coefficients and moment control.
\newblock {\em Journal of Mathematical Analysis and its Applications},
  251:452--478, 2000.

\bibitem{FeLiuRa2018}
H.~D. Fern\'{a}ndez~Sare, Z.~Liu, and R.~Racke.
\newblock Stability of abstract thermoelastic systems with inertial terms.
\newblock {\em Konstanzer Schriften in Mathematik}, 376:1--43, 2018.

\bibitem{FeRi2012}
H.~D. Fern\'{a}ndez~Sare and J.~E. Mu\~{n}oz Rivera.
\newblock Optimal rates of decay in 2-d thermoelasticity with second sound.
\newblock {\em J. Math. Phys.}, 53(073509):1--13, 2012.

\bibitem{pucci1}
A.~Fiscella and P.~Pucci.
\newblock Kirchhoff-{H}ardy fractional problems with lack of compactness.
\newblock {\em Advanced Nonlinear Studies}, 17:429--456, 2017.

\bibitem{Il2004}
A.~A. Ilyushin.
\newblock {\em Plasti\v{c}nost'. Uprugo-Plasti\v{c}eskije Deformacii}.
\newblock Klassi\v{c}eskij Universitetskij U\v{c}ebnik. Logos, Moscow, 2004.

\bibitem{JiaRa2000}
S.~Jiang and R.~Racke.
\newblock {\em Evolution Equations in Thermoelasticity}, volume 112 of {\em
  Monographs and Surveys in Pure and Applied Mathematics}.
\newblock Chapman \& Hall/CRC, Boca Raton, London, New York, Washington D.C.,
  2000.

\bibitem{Ka1985}
T.~Kato.
\newblock {\em Abstract Differential Equations and Nonlinear Mixed Problems}.
\newblock Fermi Lectures. Scuola Normale Superiore, Pisa, 1985.

\bibitem{Ki1992}
J.~U. Kim.
\newblock On the energy decay of a linear thermoelastic bar and plate.
\newblock {\em SIAM J. Math. Anal.}, 23:889--899, 1992.

\bibitem{La1990}
J.~Lagnese.
\newblock The reachability problem for thermoelastic plates.
\newblock {\em Archive for Rational Mechanics and Analysis}, 112:223--267,
  1990.

\bibitem{LaLi1988}
J.~Lagnese and J.~L. Lions.
\newblock {\em Modelling, Analysis and Control of Thin Plates}, volume~6 of
  {\em Recherches en Math\'{e}matiques Appliqu\'{e}es}.
\newblock Mason, Paris, 1988.

\bibitem{La}
I.~Lasiecka.
\newblock Uniform stabilization of the quasi-linear {K}irchhoff wave equation
  with a nonlinear boundary feedback.
\newblock {\em Control and Cybernetics}, 29(1):179--197, 2000.

\bibitem{LaMaSa2008}
I.~Lasiecka, S.~Maad, and A.~Sasane.
\newblock Existence and exponential decay of solutions to a quasilinear
  thermoelastic plate system.
\newblock {\em Nonlinear Differ. Equ. Appl.}, 15:689--715, 2008.

\bibitem{LaOng}
I.~Lasiecka and J.~Ong.
\newblock Global solvability and uniform decays of solutions to quasilinear
  equation with boundary nonlinear dissipation.
\newblock {\em Comm. Partial Diffferential Equations}, 24(11 \& 12):2069--2107,
  1999.

\bibitem{LaPoWa2017}
I.~Lasiecka, M.~Pokojovy, and X.~Wan.
\newblock Global existence and exponential stability for a nonlinear
  thermoelastic {K}irchhoff-{L}ove plate.
\newblock {\em Nonlinear Analysis: Real World Applications}, 38:184--221, 2017.

\bibitem{LaSei2006}
I.~Lasiecka and T.~Seidman.
\newblock Blowup estimates for observability of a thermoelastic system.
\newblock {\em Asymptotic Analysis}, 50:93--120, 2006.

\bibitem{LaTri1998.1}
I.~Lasiecka and R.~Triggiani.
\newblock Analyticity, and lack thereof, of thermo-elastic semigroups.
\newblock {\em ESAIM: Proc.}, 4:199--222, 1998.

\bibitem{LaTri1998.2}
I.~Lasiecka and R.~Triggiani.
\newblock Analyticity of thermo-elastic semigroups with coupled
  hinged/{N}eumann boundary conditions.
\newblock {\em Abstract Appl. Anal.}, 3:153--169, 1998.

\bibitem{LaTri1998.3}
I.~Lasiecka and R.~Triggiani.
\newblock Analyticity of thermo-elastic semigroups with free boundary
  conditions.
\newblock {\em Annali Scuola Norm. Sup. Pisa}, 27:457--482, 1998.

\bibitem{LaTri1998}
I.~Lasiecka and R.~Triggiani.
\newblock Exact null-controllability of structurally damped and thermoelastic
  parabolic models.
\newblock {\em Rend. Mat. Acta Lincei}, 9:43--69, 1998.

\bibitem{LaTri1998.4}
I.~Lasiecka and R.~Triggiani.
\newblock Two direct proofs on the analyticity of the {S}.{C}. semigroup
  arising in abstract thermoelastic equations.
\newblock {\em Adv. Differential Equations}, 3:387--416, 1998.

\bibitem{LaWi2013}
I.~Lasiecka and M.~Wilke.
\newblock Maximal regularity and global existence of solutions to a quasilinear
  thermoelastic plate system.
\newblock {\em Disc. Cont. Dyn. Syst. A}, 33(11 \& 12):5189--5202, 2013.

\bibitem{LiZh1997}
Z.~Liu and S.~Zheng.
\newblock Exponential stability of the {K}irchhoff plate with thermal or
  viscoelastic damping.
\newblock {\em Quart. Appl. Math.}, 53:551--564, 1997.

\bibitem{MoDoRo2008}
P.~H. Mott, J.~R. Dorgan, and C.~M. Roland.
\newblock The bulk modulus and {P}oisson's ratio of ``incompressible''
  materials.
\newblock {\em Journal of Sound and Vibration}, 312:572--575, 2008.

\bibitem{RiRaSeVi2018}
J.~E. Mu\~{n}oz Rivera, R.~Racke, M.~Sep\'{u}lveda, and O.~Vera~Villagr\'{a}n.
\newblock On exponential stability for thermoelastic plates: comparison and
  singular limits.
\newblock {\em Konstanzer Schriften in Mathematik}, 374:1--32, 2018.

\bibitem{Na2009}
Y.~Naito.
\newblock On the ${L}_p$--${L}_q$ maximal regularity for the linear
  thermoelastic plate equation in a bounded domain.
\newblock {\em Math. Meth. Appl. Sci.}, 32:1609--1637, 2009.

\bibitem{NaShi2009}
Y.~Naito and Y.~Shibata.
\newblock On the ${L}_p$ analytic semigroup associated with the linear
  thermoelastic plate equations in the half-space.
\newblock {\em J. Math. Soc. Japan}, 61(4):971--1011, 2009.

\bibitem{No1986}
W.~Nowacki.
\newblock {\em Thermoelasticity}.
\newblock Pergamon Press, Oxford -- New York -- Toronto, etc., 2nd ed. edition,
  1986.

\bibitem{Og2001}
R.~W. Ogden.
\newblock Nonlinear elasticity: Theory and practice.
\newblock In {\em London Mathematical Society Lecture Note Series}, volume 283.
  Cambridge University Press, Cambridge, UK, 2001.

\bibitem{pucci}
N.~Pan, Pucci. P., and B.~Zhang.
\newblock Degenerate {K}irchhoff-type hyperbolic problems involving the
  fractional {L}aplacian.
\newblock {\em Journal of Evolution Equations}, 18:385--409, 2018.

\bibitem{Po2011}
M.~Pokojovy.
\newblock {\em Zur Theorie w\"armeleitender Reissner-Mindlin Platten}.
\newblock Ph.D. thesis. University of Konstanz, Germany, 2011.

\bibitem{QiXuGuo2013}
H.-T. Qi, H.-Y. Xu, and X.-W. Guo.
\newblock The {C}attaneo-type time fractional heat conduction equation for
  laser heating.
\newblock {\em Computers and Mathematics with Applications}, 66:824--831, 2013.

\bibitem{QuRa2008}
R.~Quintanilla and R.~Racke.
\newblock Qualitative aspects of solutions in resonators.
\newblock {\em Arch. Mech.}, 60:345--360, 2011.

\bibitem{QuRa2011}
R.~Quintanilla and R.~Racke.
\newblock Qualitative aspects of solutions in resonators, {A}ddendum to.
\newblock {\em Arch. Mech.}, 63:429--435, 2011.

\bibitem{RaUe2016}
R.~Racke and Y.~Ueda.
\newblock Dissipative structures for thermoelastic plate equations in
  $\mathbb{R}^{n}$.
\newblock {\em Adv. Differential Equations}, 21(7/8):601--630, 2016.

\bibitem{RaUe2017}
R.~Racke and Y.~Ueda.
\newblock Nonlinear thermoelastic plate equations -- {G}lobal existence and
  decay rates for the {C}auchy problem.
\newblock {\em J. Differential Equations}, 263:8138--8177, 2017.

\bibitem{SchuKaHoKa2012}
T.~Schuster, B.~Kaltenbacher, B.~Hofmann, and K.~S. Kazimierski.
\newblock {\em Regularization Methods in Banach Spaces}, volume~10 of {\em
  Radon Series on Computational and Applied Mathematics}.
\newblock Walter de Gruyter, Berlin/Boston, 2012.

\bibitem{Shi1994}
Y.~Shibata.
\newblock On the exponential decay of the energy of a linear thermoelastic
  plate.
\newblock {\em Mat. Apl. Comput.}, 13(2):81--102, 1994.

\end{thebibliography}
\end{document}